\documentclass[11pt]{amsart}
\usepackage{amssymb, graphicx}
\usepackage{color}
\usepackage{amsfonts}
\usepackage{amsthm}
\usepackage{enumerate}
\usepackage{comment}
\usepackage[mathscr]{eucal}
\usepackage{graphicx}

\usepackage[pdftex, bookmarksnumbered, bookmarksopen, colorlinks, citecolor=blue, linkcolor=blue]{hyperref}

\allowdisplaybreaks

\newtheorem{thm}{Theorem}[section]

\newtheorem{cor}[thm]{Corollary}
\newtheorem{lem}[thm]{Lemma}

\theoremstyle{definition}
\newtheorem{remark}[thm]{Remark}

\theoremstyle{remark}
\numberwithin{equation}{section}

\newcommand{\R}{{\mathbb R}}
\newcommand{\N}{{\mathbb N}}
\newcommand{\Z}{{\mathbb Z}}

\newcommand{\M}{{\mathcal M}}

\newcommand{\n}{{\mathcal N}}
\newcommand{\bs}{\begin{split}}
\newcommand{\es}{\end{split}}
\newcommand{\F}{\mathcal F}

\theoremstyle{plain}

\theoremstyle{definition}

\theoremstyle{plain}

\theoremstyle{definition}

\theoremstyle{plain}

\newcommand{\be}{\begin{eqnarray*}}
\newcommand{\ee}{\end{eqnarray*}}
\newcommand{\beq}{\begin{align}}
\newcommand{\eeq}{\end{align}}

\def\esssup{\mathop{\mathrm{ess \ sup}}}
\def\Q{\mathcal{Q}}
\def\1{\mathbf{1}}

\begin{document}

%
%
%
%
%
%
%
%
\setcounter{page}{1}
\title[Rough operators]
{A noncommutative weak $L_1$-estimate for rough singular integrals}

\author[G. Hong]{Guixiang Hong}
\address{
	Institute for Advanced Study in Mathematics\\
	Harbin Institute of Technology\\
	Harbin 150001\\
	China}

\email{gxhong@hit.edu.cn}

\author{Liang Wang}
\address{Department of Mathematics\\ City University of Hong Kong\\Hong Kong SAR\\China}
\email{L.Wang@cityu.edu.hk}

\author[X. Wang]{Xumin Wang}
\address{
	Institute for Advanced Study in Mathematics\\
	Harbin Institute of Technology\\
	Harbin 150001\\
	China}

\email{xumin.wang@hit.edu.cn}
\author[B. Xu]{Bang Xu}
\address{School of Mathematical Sciences\\ Xiamen University\\
	Xiamen 361005\\
	China}

\email{bangxu@xmu.edu.cn}


\subjclass[2020]{Primary 46L52; Secondary 42B20,  46L53}
\keywords{Noncommutative $L_p$ spaces; rough singular integral; weak $(1,1)$; noncommutative Calderón-Zygmund decomposition}

\begin{abstract}
In this paper, we establish a noncommutative version of Andreas Seeger's weak type $(1,1)$ estimate for singular integral operators  with kernels satisfying angular $L\log L$ integrability and an averaged radial Dini condition. The strategy for obtaining the sharp integrability condition involves not only some new estimates of the off-diagonal good and bad functions in Parcet's noncommutative Calder\'on-Zygmund decomposition but also certain essential modifications of Seeger's kernel decomposition and estimates.
\end{abstract}

\maketitle

\section{Introduction}

	The purpose of this paper is to establish weak type $(1,1)$ estimates for
rough convolution operators acting on functions with values in a semifinite
von Neumann algebra.

In the classical setting, an important special case of rough singular integrals occurs if the convolution kernel is homogeneous of degree $-d$ with $d\geq2$. Suppose that $\Omega\in L_1(S^{d-1})$ and
\begin{align}\label{cancellation}
\int_{S^{d-1}}\Omega(\theta)d\sigma(\theta)=0,
\end{align}
where $d\sigma(\theta)$ is the sphere measure of $S^{d-1}$.  The associated rough homogeneous singular integral is initially defined on smooth compactly
supported functions by
\begin{align*}
	T_{\Omega}f(x)=\mbox{p.v.}\int_{\R^d}
	\frac{\Omega({y/|y|})}{|y|^{d}}f(x-y)dy.
\end{align*}
In what follows, $L\log L(S^{d-1})$ denotes the class determined by
\[
\int_{S^{d-1}}|\Omega(\theta)|
\bigl(1+\log_+|\Omega(\theta)|\bigr)\,d\sigma(\theta)<\infty.
\]

The theory originates in the work of Calder\'on and Zygmund
\cite{CZ1952,CZ1956}. Their method of rotations gives strong $L_p$-boundedness for $1<p<\infty$ whenever $\Omega\in L\log L(S^{d-1})$. The endpoint question asks whether
\[
\bigl|\{x\in\R^d:|T_\Omega f(x)|>\lambda\}\bigr|
\leq \frac{C_\Omega}{\lambda}\|f\|_{L_1(\R^d)},
\qquad \lambda>0.
\]
The first breakthrough is due to Christ and Rubio de Francia \cite{CR1988}, who obtained the weak type $(1,1)$ estimate of $T_{\Omega}$ under the $L\log L$ assumption in dimension two; see also \cite{Hofmann1988} for a similar result under a stronger hypothesis.  In higher dimensions, some closely related results for Bochner-Riesz means with critical regularity index and rough Hardy-Littlewood maximal operators were also obtained in  \cite{Christ1988,CR1988}.
Motivated by all this, Seeger \cite{Seeger1996JAMS} finally extended Christ and Rubio de Francia's result to arbitrary dimension by introducing the microlocal decomposition that enabled him to  separate
spatial directions and frequency regions
so that radial oscillation can be used without differentiating the angular
factor; see also Tao's similar result on homogeneous groups in \cite{Tao1999}. 

Actually, Seeger \cite{Seeger1996JAMS}  obtained the endpoint estimate for a more general class of convolution kernels.
Let $K\in\mathcal S'(\R^d)$ be a tempered distribution
represented by a locally integrable function away from the origin.
An operator associated to this kernel is defined by
\begin{align}\label{general case}
\langle Tf,g\rangle
=\int_{\R^d}\int_{\R^d}
K(x-y)f(y)\overline{g(x)}\,dy\,dx
\end{align}
for all $f,g\in C_c^\infty(\R^d)$ with disjoint supports. Clearly, $T$ extends to a bounded operator on $L_2(\R^d)$ if and only if $m=\widehat K\in L_\infty(\R^d)$, where $\widehat K(\xi)=\int e^{-2\pi i x\cdot\xi}K(x)\,dx$. Under the $L_2$-boundedness assumption, a standard sufficient regularity condition for weak type $(1,1)$ boundedness in classical theory is the H\"ormander condition
\begin{align}\label{hormander}
\sup_{y\ne0}\int_{|x|>2|y|}|K(x-y)-K(x)|\,dx<\infty.
\end{align}
But Seeger considered the following conditions, which, roughly speaking, impose regularity in each direction and only integrability in the angular variable.
	Put
	\[
	V(\theta)
=\sup_{R>0}V_R(\theta)=\sup_{R>0}\int_R^{2R}|K(r\theta)|r^{d-1}\,dr,
	\qquad
	W=\int_{S^{d-1}}V(\theta)\,d\sigma(\theta).
	\]
	For \(a\geq2\), define
	\[
	\eta(a)
	=\sup_{\substack{v>0\\R\geq av}}
	\int_{S^{d-1}}\int_R^{2R}
	|K((r-v)\theta)-K(r\theta)|r^{d-1}\,dr\,d\sigma(\theta),
	\qquad
	D=\int_2^\infty\eta(a)\,\frac{da}{a}.
	\]
We use $\log_+ a=\max\{0,\log a\}$ for $a>0$ with $\log_+0=0$. Set
\begin{equation*}
	L=\int_{S^{d-1}}V(\theta)\bigl(1+\log_+ V(\theta)\bigr)\,d\sigma(\theta),
	\qquad \Lambda_K=1+D+W+L.
\end{equation*}
Seeger  proved in \cite{Seeger1996JAMS} that the two conditions
$D<\infty$ and $L<\infty$ imply weak type $(1,1)$ boundedness of $T$ whenever $\widehat K\in L_\infty(\R^d)$. 
The distinction
from a full translation regularity condition is essential: the difference
in $\eta(a)$ compares two points on the same ray, and the angular variable
is integrated rather than differentiated.

In this paper, we extend Seeger's result \cite{Seeger1996JAMS} to the operator-valued setting. Let $(\M,\tau)$ be a von Neumann algebra with a normal semifinite faithful
trace, and set
\[
\n=L_\infty(\R^d)\,\bar\otimes\,\M,
\qquad \varphi=\int_{\R^d}dx\otimes\tau.
\]
For $1\leq p<\infty$, the noncommutative $L_p$-space associated to  $(\mathcal N,\varphi)$ is denoted by $L_p(\mathcal N)$ with the norm given by
$$\|f\|_p=\Big[\int_{\R^d}\tau(|f(x)|^p)\Big]^{1/p}.$$
As usual, set $L_\infty(\mathcal N)=\mathcal N$ with its
operator norm.
Given a measurable operator $f$ affiliated to $\mathcal N$, the weak $L_1$-norm  is defined as
\[
\|f\|_{1,\infty}
=\sup_{\lambda>0}\lambda\,
\varphi\bigl(\chi_{(\lambda,\infty)}(|f|)\bigr).
\]
All these are standard notions in noncommutative analysis, and we refer the reader to \cite{JungeXu2007JAMS,FackKosaki1986PJM} for more detailed information.
In this framework, the seminal result is due to Parcet \cite{Parcet2009}, who
gave the first noncommutative Calder\'on-Zygmund  decomposition, and established the noncommutative weak type $(1,1)$ estimate for
kernels with Lipschitz regularity.
Since the notion of ``points'' is unavailable in the noncommutative setting, the stopping time argument was replaced by the Cuculescu projections \cite{Cuculescu1971} from noncommutative martingale theory. Parcet applied it to obtain a Calder\'on-Zygmund decomposition but with four terms (see Theorem \ref{thm:decomposition}):
$$f=g+b+g_{\mathrm{off}}+b_{\mathrm{off}}.$$
The additional terms $g_{\mathrm{off}}$ and $b_{\mathrm{off}}$ are called off-diagonal good and bad functions respectively. They do not enjoy the same nice properties as the diagonal parts $g$ and $b$, which pose a major obstacle to applying classical ideas or techniques, and substantial new ideas are required.
For instance, even under the Lipschitz regularity, to deal with $g_{\mathrm{off}}$, Parcet explored the $TT^*$ method and a new pseudo-localization technique; see also \cite{MeiParcet2009}. This method does not apply to the H\"ormander regularity \eqref{hormander}, so he left it as  an open problem in \cite[Remark~5.5]{Parcet2009} and mentioned it again in his ICM survey \cite{Parcet2026ICM}. It is worth mentioning that Parcet's result has found a striking application to the resolution of the Nazarov--Peller conjecture in operator perturbation theory; see \cite{CaspersPotapovSukochevZanin2019AmerJMath}. 


Subsequently,  Cadilhac, Conde-Alonso and Parcet \cite{CCP2022} constructed a new Calder\'on--Zygmund decomposition by modifying $b_{\mathrm{off}}$ such that $g_{\mathrm{off}}$ vanishes. This enables them to obtain endpoint results under an $L_2$-H\"ormander regularity condition, which is stronger than the normal one \eqref{hormander} but weaker than the Lipschitz regularity condition. Under the same $L_2$-condition, the authors in \cite[Theorem~1.1]{HongLaiXu2023MathAnn} proved the noncommutative maximal weak type $(1,1)$ estimate for truncations of singular integral operators.


The present paper treats the  class of singular integrals with kernels satisfying  the angular $L\log L$ and averaged radial Dini hypotheses introduced above by Seeger.
\begin{thm}\label{thm:main}
	Let $T$ be defined in (\ref{general case}) and  $d\geq2$. Suppose that
	$\widehat K=m\in L_\infty(\mathbb R^d)$, $D<\infty$ and $L<\infty$.
	Then $T$ extends to a weak type $(1,1)$ operator on $L_1(\mathcal N)$. More precisely, for every $f\in L_1(\mathcal N)$,
	\[
	\sup_{\lambda>0}\lambda\,
	\varphi\bigl(\mathbf1_{(\lambda,\infty)}(|Tf|)\bigr)
	\leq C_d\bigl(\|m\|_\infty^2+\Lambda_K\bigr)\|f\|_1,
	\]
	where the constant $C_d$ depends only on the dimension $d$.
\end{thm}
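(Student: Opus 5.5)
We will follow Parcet's scheme and combine it with Seeger's microlocal kernel decomposition. By a standard density and dilation argument it suffices to treat $f\in L_1(\mathcal N)$ positive, bounded and with compactly supported $\mathcal M$-valued components (in the sense of Parcet's dense class), and to prove
\[
\lambda\,\varphi\bigl(\mathbf 1_{(\lambda,\infty)}(|Tf|)\bigr)\lesssim (\|m\|_\infty^2+\Lambda_K)\|f\|_1 .
\]
We then fix $\lambda>0$ and apply the noncommutative Calder\'on--Zygmund decomposition of Theorem~\ref{thm:decomposition} at height $\lambda$. This uses the Cuculescu projections $(q_k)$ relative to the dyadic filtration, the projection $q=\bigwedge_k q_k$, and $\zeta=\mathbf 1-\bigvee_{Q}\mathbf 1_{5Q}\,(q_{k-1}-q_k)$ with $\lambda\varphi(\mathbf 1-\zeta)\lesssim\|f\|_1$. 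It yields
\[
f=g+b+g_{\mathrm{off}}+b_{\mathrm{off}},
\]
with $b=\sum_k p_k(f_k-f_{k-1})p_k$ and $b_{\mathrm{off}}=\sum_k p_k(f_k-f_{k-1})q_k+q_k(f_k-f_{k-1})p_k$, where $p_k=q_{k-1}-q_k$. By quasi-triangularity it suffices to bound each $Tg$, $Tg_{\mathrm{off}}$, $\zeta T b\,\zeta$ and $\zeta Tb_{\mathrm{off}}\zeta$ in weak $L_1$, at the cost of a term $\lambda\varphi(\mathbf 1-\zeta)$.

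\textbf{Good parts.} For $g$ we use $L_2$-boundedness: $\|Tg\|_2^2\le\|m\|_\infty^2\|g\|_2^2\lesssim\|m\|_\infty^2\lambda\|f\|_1$, and Chebyshev gives the bound. This is where $\|m\|_\infty^2$ enters. For $g_{\mathrm{off}}$, Parcet's $L_2$ estimate $\|g_{\mathrm{off}}\|_2^2\lesssim\lambda\|f\|_1$ would give the same conclusion. If only a weaker form is available, we instead prove the new off-diagonal estimate we need: we split $g_{\mathrm{off}}$ by scale, push the $k$-th piece to $\zeta$-conjugation, and control the remainders via the $L_2$ almost-orthogonality of $(f_k-f_{k-1})$ together with the projections.

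\textbf{Bad parts.} Here we import Seeger's decomposition. We write $K=\sum_j K_j$ with $K_j=K\mathbf 1_{\{2^{j-1}<|x|\le2^j\}}$. For the bad piece $b_k$ at scale $2^{-k}$ (supported on dyadic cubes $Q$ of side $2^{-k}$ and having vanishing $\mathcal M$-valued mean on each $Q$), we consider $\sum_{s\ge1}\sum_k K_{s-k}*b_k$. For each $s$, we decompose $K_{s-k}$ microlocally, using a partition of $S^{d-1}$ into $\approx 2^{s\varepsilon(d-1)}$ caps of width $2^{-s\varepsilon}$ and a frequency localization $\Gamma^s_\nu$ adapted to each cap. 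This gives $K_{s-k}=\sum_\nu \Gamma^s_\nu K_{s-k}+\text{error}$. The error terms are estimated in $L_1(\mathcal N)$, directly off $\zeta$, using the mean-zero property of $b_k$ and the radial regularity $\eta$; summation over $s$ is controlled by $D$. The angularly localized main terms are estimated in $L_2(\mathcal N)$ via Plancherel in $x$, which is valid for $\mathcal M$-valued functions since $L_2(\mathcal N)=L_2(\R^d;L_2(\mathcal M))$. The pieces $b_k$ are replaced by the truncated averages $\sum_Q|b_Q|$, where we use $\|b_Q\|_1\lesssim\lambda|Q|$ from the Cuculescu construction, and the $L\log L$ condition enters through Seeger's splitting of $V$ at level $2^{\epsilon s}$. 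Crucially, the $L_2$ step only needs the operator-valued Plancherel and the trace-positivity $\tau(a^*a)$, so it transfers. The diagonal $b$ is therefore handled as in Seeger's proof, with $\zeta$ killing the local part near the support.

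\textbf{Main obstacle: $b_{\mathrm{off}}$.} The pieces $p_k\Delta_k q_k$ are not self-adjoint, and they do not have an $L_1$-controlled average bounded by $\lambda|Q|$ in the positive sense. The $L_2$ step in Seeger's argument needs a square-sum estimate of the form
\[
\Bigl\|\sum_k\sum_Q |b_{\mathrm{off},Q}|\Bigr\|_2^2\lesssim\lambda\|f\|_1
\]
after mollification at scale $2^{-k+s\varepsilon}$. To get it, we will try to exploit that $q_k(f_k-f_{k-1})p_k$ satisfies the column/row estimates $\|q_k f_k q_k\|_\infty\le2\lambda$. We then apply the Cauchy--Schwarz inequality for the trace, $|\tau(x^*yz)|\le\|\cdot\|\|\cdot\|$, to write $\|\sum_k\Phi_k*(p_k\Delta_kq_k)\|_2^2$ as $\sum_k\varphi(p_k\Delta_k q_k\Delta_k p_k)\lesssim\lambda\sum_k\varphi(p_k f_k)\lesssim\lambda\|f\|_1$. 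This is the key new estimate, and it is the point we expect to require the most care, since cross terms between different $k$ must be made almost orthogonal through the frequency localization. Collecting all contributions gives the asserted bound with constant $C_d(\|m\|_\infty^2+\Lambda_K)$.
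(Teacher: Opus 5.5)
Your proposal has genuine gaps, and they sit in the off-diagonal part, which is where the theorem is new.

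\textbf{The off-diagonal good part.} Your treatment of $g_{\mathrm{off}}$ rests on an estimate that is not available. Theorem~\ref{thm:decomposition}(v) gives $\sum_k\|g_{k,r}\|_2^2\lesssim\lambda\|f\|_1$ only for each fixed $r$, with no decay in $r$. There is no bound $\|g_{\mathrm{off}}\|_2^2\lesssim\lambda\|f\|_1$; otherwise $g_{\mathrm{off}}$ could simply be merged with $g$, and Parcet would not have needed pseudo-localization for it. So $g_{\mathrm{off}}$ has to go through the kernel decomposition like a bad function, and your fallback sentence gives no mechanism for that. The paper needs two new ingredients here:
\begin{enumerate}
\item[(a)] the martingale cancellation $\mathsf E_{n-j+r-1}g_{n-j,r}=0$ combined with Littlewood--Paley almost orthogonality, which gives the $L_2$ bound for $\sum_j\Gamma^s_j*g_{n-j,r}$ uniformly in $r$ (Lemma~\ref{mc:prop:paired});
\item[(b)] the new bound $\sup_r\sum_k\|g_{k,r}\|_1\lesssim\|f\|_1$ (Lemma~\ref{HWX19}(i)), which makes Lemma~\ref{mc:thm:complement} applicable to the $H_j-\Gamma_j$ part.
\end{enumerate}

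\textbf{The decomposition formulas.} Your formulas are not Parcet's. The diagonal pieces are $p_k(f-f_k)p_k$. Your $p_k(f_k-f_{k-1})p_k$ is $\sigma_k$-measurable and has no cancellation on cubes of $\mathcal Q_k$. Also, $b_{\mathrm{off}}=\sum_{r\ge1}\sum_k[p_k(f-f_{k+r})p_{k+r}+\mathrm{h.c.}]$ is a sum of two-scale corners, whereas your $p_k\Delta_kq_k+\mathrm{h.c.}$ has the shape of $g_{\mathrm{off}}$. The $TT^*$ bound for the corners $p_{n-j}F_jp_{n+r-j}$ is exactly what you leave open. The paper proves it in Lemma~\ref{badfunction} using the Hilbert-valued H\"older inequality and a split of scale pairs into $|i-j|\le(s-n)_+$ versus $|i-j|>(s-n)_+$. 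This costs a factor $\sqrt{1+(s-n)_+}$, which is absorbed because the microlocal level is $s=B(n+r)$.

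\textbf{The obstruction to the sharp hypotheses.} More fundamentally, you do not address why the stated hypotheses suffice. The off-diagonal pieces carry two scale parameters $(n,r)$, and their $L_1$ norms are controlled only for each fixed $r$. Suppose the angular tail kernel $R^B_j$ (from $\{V>2^{\delta B}\|V\|_1\}$) and the radial error $S^B_j$ at level $B\approx n+r$ are applied to $b_{\mathrm{off}}$ and $g_{\mathrm{off}}$ separately. Then Young's inequality produces $\sum_BB^2\gamma(B)$ and $\sum_BB^2e(B)$ (Remark~\ref{rk:sum of g and b}). These are finite only if $\int V(\log_+V)^2\,d\sigma<\infty$ and $\int_2^\infty\eta(a)\log a\,\frac{da}{a}<\infty$, not under $L<\infty$ and $D<\infty$.

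The paper overcomes this in two steps. It takes the level $B(s)\in\{8\cdot2^m\}$ constant on dyadic blocks $B\le s<2B$. It then proves, for the \emph{combined} function $h=g_{\mathrm{off}}+b_{\mathrm{off}}$, the bound $\sum_j\|h_j^B\|_1\le12B\|f\|_1$ (Lemma~\ref{HWX19}(ii)). This comes from the identity for $Z(a,m)$, in which $b_{k,r}+g_{k,r}$ reduces to $p_kfp_{k+r}+\mathrm{h.c.}$ plus a telescoping sum. The result is only $\sum_BB\gamma(B)\lesssim1+L$ and $\sum_BBe(B)\lesssim W+D$.

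Nothing in your plan plays this role. Even with all $L_2$ steps completed, your argument would give the theorem only under an $L(\log L)^2$ angular condition and a logarithmically strengthened Dini condition.
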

	For the homogeneous kernel $K(r\theta)=r^{-d}\Omega(\theta)$, it is easy to check
\[
V(\theta)=(\log2)|\Omega(\theta)|,
\qquad \eta(a)\leq C_da^{-1}\|\Omega\|_1,\quad a\geq2.
\]
\begin{cor}
Let $\Omega\in L\log L(S^{d-1})$ satisfy (\ref{cancellation}).
Then for every $f\in L_1(\n)$,
	\[
	\sup_{\lambda>0}\lambda\,
	\varphi\bigl(\mathbf1_{(\lambda,\infty)}(|T_\Omega f|)\bigr)
	\leq C_{d,\Omega}\|f\|_1.
	\]
\end{cor}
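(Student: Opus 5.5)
The corollary will follow by applying Theorem~\ref{thm:main} to the homogeneous kernel $K(x)=\Omega(x/|x|)|x|^{-d}$, so the plan is to verify the three hypotheses $\widehat K\in L_\infty$, $D<\infty$ and $L<\infty$.

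\emph{Step 1: the integrability quantities.} Since $V_R(\theta)=|\Omega(\theta)|\int_R^{2R}r^{-1}\,dr=(\log 2)|\Omega(\theta)|$ for every $R$, we get $V=(\log2)|\Omega|$. Consequently
\[
W=(\log 2)\|\Omega\|_{L_1(S^{d-1})},\qquad L\le C\int_{S^{d-1}}|\Omega|\bigl(1+\log_+|\Omega|\bigr)\,d\sigma<\infty,
\]
because $\log_+(ct)\le \log_+ c+\log_+ t$.

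\emph{Step 2: the Dini quantity.} For $R\ge av$ with $a\ge2$, we have
\[
|K((r-v)\theta)-K(r\theta)|=|\Omega(\theta)|\bigl((r-v)^{-d}-r^{-d}\bigr)\le C_d|\Omega(\theta)|\,v\,r^{-d-1},
\]
using the mean value theorem together with $r-v\ge r/2$. Integrating against $r^{d-1}\,dr$ over $[R,2R]$ gives at most $C_d|\Omega(\theta)|\,v/R\le C_d|\Omega(\theta)|/a$. Hence $\eta(a)\le C_d a^{-1}\|\Omega\|_1$, and therefore $D\le C_d\|\Omega\|_1\int_2^\infty a^{-2}\,da<\infty$.

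\emph{Step 3: boundedness of the multiplier.} This is the only step with any content. Using the cancellation \eqref{cancellation}, the principal value defines a tempered distribution. For its Fourier transform I would use the classical formula
\[
m(\xi)=\int_{S^{d-1}}\Omega(\theta)\Bigl[\log\frac{1}{|\xi\cdot\theta|}-\frac{i\pi}{2}\operatorname{sgn}(\xi\cdot\theta)\Bigr]d\sigma(\theta)
\]
(up to harmless constants), which is valid for odd and even parts after the cancellation removes the divergent constant. The imaginary part is bounded by $C\|\Omega\|_1$. For the logarithmic part, Young's inequality $st\le s\log_+ s+e^{t}$ gives
\[
\int_{S^{d-1}}|\Omega(\theta)|\log\frac{1}{|\xi'\cdot\theta|}\,d\sigma(\theta)\le C\|\Omega\|_{L\log L}+C\sup_{\xi'}\int_{S^{d-1}}|\xi'\cdot\theta|^{-1/2}\,d\sigma(\theta),
\]
uniformly in the unit vector $\xi'=\xi/|\xi|$. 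The last integral is finite because $d\ge2$. This is the standard Calder\'on--Zygmund argument, so $\|m\|_\infty\le C_d(1+\|\Omega\|_{L\log L})$.

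Theorem~\ref{thm:main} then yields the stated estimate with $C_{d,\Omega}=C_d(\|m\|_\infty^2+\Lambda_K)$.
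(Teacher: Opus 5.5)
Your proposal is correct and follows the paper's route: you apply Theorem~\ref{thm:main} to $K(x)=\Omega(x/|x|)|x|^{-d}$ after checking $V=(\log 2)|\Omega|$ and $\eta(a)\le C_d a^{-1}\|\Omega\|_1$, which are exactly the two facts the paper records before stating the corollary. Your Step~3 just makes explicit the classical Calder\'on--Zygmund bound $\|m\|_\infty\le C_d(1+\|\Omega\|_{L\log L})$, which the paper uses without comment.
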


The proof of Theorem \ref{thm:main} follows Seeger's kernel decomposition and microlocal analysis, but requires entirely new ideas or techniques. Seeger's $TT^*$ method-dependent argument motivates us to explore Parcet's Calder\'on-Zygmund decomposition \cite{Parcet2009} rather than the new one in \cite{CCP2022}. Indeed, the latter seems more efficient in dealing with the H\"ormander condition but does not seem compatible with the $TT^*$ method; see Lemma \ref{badfunction}. 
We sketch below the argument and the new ideas and techniques.

\smallskip

(i) The diagonal parts have the same nice properties as in the classical setting. For the off-diagonal parts, in addition to the estimates in \cite{Parcet2009}, we establish new estimates of $g_{\mathrm{off}}$ and the sum $g_{\mathrm{off}}+b_{\mathrm{off}}$ in Lemma \ref{HWX19}. We believe that, besides their important role in the present paper, they have further applications in noncommutative Calder\'on-Zygmund theory.

\smallskip

(ii) The diagonal part could be handled in the same way as in Seeger's paper \cite{Seeger1996JAMS}. To deal with the off-diagonal part, we decompose the truncated kernel $K_j$ in a different way in \eqref{decomposition of kernel}. More precisely, we set
\begin{align*}
&	D^s=\{\theta:V(\theta)\leq2^{\epsilon B(s)}\},\\
& 	U_{j}^{B(s)}(r\theta)=
 \int_0^\infty K(\rho\theta)2^{\delta B(s)-j}
 \omega\bigl(2^{\delta B(s)-j}(r-\rho)\bigr)\,d\rho
\end{align*}
with $\omega$ being a bump function around the origin, and then define
$$H_j^{B(s)}=\textbf{1}_{D^{s}}U_{j}^{B(s)}, \quad
		R^{B(s)}_{j}=\textbf{1}_{S^{d-1}\setminus D^s}U^{B(s)}_{j},\quad
		S^{B(s)}_{j}=K_j-U^{B(s)}_{j}$$
such that $K_j=H_j^{B(s)}+R_{j}^{B(s)}+S^{B(s)}_{j}$.
Here $B(s)$ is the unique integer of the form $8\cdot2^{n}$ such that $B(s)\leq s<2B(s)$, which is different from Seeger's $B(s)=s$. This, together with the estimates \eqref{RB} and  \eqref{SB} for the kernels $R$ and $S$ respectively, is quite relevant to {handling} the off-diagonal part in {Section} \ref{offdia-part}.

Interestingly, in order to get the sharp result for $R$ and $S$, we have to explore the combination $g_{\mathrm{off}}+b_{\mathrm{off}}$ before taking norms in Lemma \ref{HWX19}; see Remark \ref{rk:sum of g and b}.

\smallskip

(iii) To deal with $H$, we decompose it further in the same way as Seeger, that is,
$$H^{B(s)}_j=\Gamma_j^{B(s)}+(H_j^{B(s)}-\Gamma_j^{B(s)}).$$

For the term associated to $\Gamma$ acting on $b_{\mathrm{off}}$, we use the $TT^*$ method. However, the above choice of $B(s)$ and noncommutativity force us to establish a more flexible $L_2$-estimate in Lemma \ref{badfunction} in place of \cite[Lemma~2.1]{Seeger1996JAMS}. The proof becomes more complicated. More precisely, since the bad blocks are operator corners involving two stopping scales, we
estimate their directional square expansions using noncommutative H\"older inequality, and split the scale pairs into $|i-j|\leq B(s)-n$ and
$|i-j|>B(s)-n$.
For the term associated to $\Gamma$ acting on $g_{\mathrm{off}}$, an almost orthogonality principle involving martingale differences and Fourier multipliers supplies the required
$L_2$-bound in Lemma \ref{mc:prop:paired}.

\smallskip

(iv) Regarding the kernel $H-\Gamma$, our choice of $B(s)$ again forces us to have a more flexible estimate in Lemma \ref{mc:thm:complement} extending \cite[Lemma~2.2]{Seeger1996JAMS}. This will be used to deal with the off-diagonal good and bad functions separately. To make it applicable, we show that $g_{\mathrm{off}}$ enjoys the same $L_1$-estimate as $b_{\mathrm{off}}$ in Lemma \ref{HWX19} (i).

\bigskip


The paper is organized as follows. Section~\ref{czdecom} recalls the
noncommutative Calder\'on--Zygmund decomposition and its basic
properties. Section~\ref{sec:microlocal} establishes the microlocal estimates
that form the core of this paper. In {Section~\ref{sec:proof-main}},
we combine these estimates with the kernel decomposition
to prove Theorem~\ref{thm:main}. In the {Appendix
, we show that the ideas and estimates developed here also apply to Bochner--Riesz
means at the critical index.

\bigskip

Throughout the paper, $C_d$ denotes a positive constant
depending only on the dimension $d$, whose value may
change from line to line.

\bigskip


{\bf Note:}
{With} more effort, we are able to prove a noncommutative maximal weak type $(1,1)$ estimate of the truncated operators with kernels satisfying Seeger's conditions. {This result will appear in a subsequent paper.}

\section{Noncommutative Calder{\'o}n-Zygmund decomposition}\label{czdecom}
Throughout the paper, the pair $(\M,\tau)$ denotes a semifinite von Neumann algebra equipped with a normal semifinite faithful trace $\tau$, and we use the tensor product von Neumann algebra  $\n=L_\infty(\R^d)\,\bar\otimes\,\M$ with its natural trace $\varphi(f)=\int_{\R^d}\tau(f(x))dx.$
Let $\Q$ be the collection of dyadic cubes in $\R^d$, that is
\[
\Q = \bigcup_{k \in \mathbb{Z}} \Q_k = \bigcup_{k \in \mathbb{Z}} \left\{ 2^{-k} \cdot [j_1, j_1 + 1) \times [j_2, j_2 + 1) \times \dots [j_d, j_d + 1) : (j_1, \dots, j_d) \in \mathbb{Z}^d \right\}.
\]
For an integer $k\in \Z$, let $\Q_k$ be the set of dyadic cubes of side length $2^{-k}$, and  let $\sigma_k$ be the $\sigma$-algebra generated by $\Q_k$. Set
\begin{equation*}
 \mathcal N_k=L_\infty(\mathbb R^d,\sigma_k)
                     \,\bar\otimes\,\mathcal M,\qquad
 \mathsf E_k f
   =\sum_{Q\in\mathcal Q_k} f_Q\textbf{1}_Q,
 \quad f_Q=\frac1{|Q|}\int_Qf(y)\,dy.
\end{equation*}
For simplicity, we write $f_k=\mathsf{E}_kf$ and $df_{k}=f_{k}-f_{k-1}$.

As in \cite{Parcet2009}, consider the following dense class in $L_1(\mathcal N)$
$$\mathcal N_{c,+} =\Big\{
f: \R^d \to \M\cap L_1(\M) \, \big| \ f \geq0, \
\mathrm{supp}_{\R^d} f \ \ \mathrm{is \
	compact} \Big\},$$
where
$\mathrm{supp}_{\R^d} f=\mathrm{supp}\|f\|_\M$ denotes the Euclidean support of $f$,  as opposed to its support as an operator affiliated to $\n$. Given $f \in \mathcal{N}_{c,+}$ and $\lambda>0$, it follows from \cite[Lemma~3.1]{Parcet2009} that there exists some $m_\lambda(f)\in\Z$ such  that $f_k\leq \lambda\1_{\mathcal{N}}$ for all $k\leq m_\lambda(f)$ , where $\1_{\mathcal{N}}$ is the identity operator in $\mathcal{N}$.  Recall that the
Cuculescu projections $(q_k)_k$ are defined as
\begin{equation*}
	q_k=\begin{cases}
		\textbf{1}_\n,&k\leq m_\lambda(f),\\
		\mathbf1_{[0,\lambda]}(q_{k-1}f_kq_{k-1}),&k>m_\lambda(f).
	\end{cases}
\end{equation*}
Define $p_k=q_{k-1}-q_k=\sum_{Q\in\mathcal Q_k}p_Q\mathbf1_Q$ for $k\in\Z$ so that
\[
\sum_kp_k=\textbf{1}_\n-q,\qquad q=\bigwedge_kq_k.
\]
Then by definition,
	\begin{equation}\label{eq:pkqk}
		q_kf_kq_k\leq\lambda q_k,\qquad \lambda p_k\leq p_kf_kp_k\leq2^d\lambda p_k.
	\end{equation}

We now present Parcet's noncommutative Calder{\'o}n-Zygmund decomposition \cite{Parcet2009}.
For any $Q\in\Q$ and $s\in\N$, denote by $\ell(Q)$ the side length of $Q$ and by $sQ$ the cube with the same center as $Q$ such that $\ell(sQ)=s\ell(Q)$. For $x\in\mathbb R^d$, $Q_{x,k}$ denotes the unique cube in $\Q_k$ containing $x$.
\begin{thm}\label{thm:decomposition}
	Let $f\in\mathcal N_{c,+}$, $\lambda>0$.  Define
	\begin{equation*}
		\zeta=\1_{\mathcal{N}}-\bigvee_{Q\in\mathcal Q}p_Q\mathbf1_{513Q}.
	\end{equation*}
	There is a decomposition
	\begin{equation*}
		f=g+b+h
	\end{equation*}
such that the following properties hold.
	\begin{enumerate}
		\item[\rm{(i)}]
		$\varphi(\textbf{1}_\n-\zeta)\leq513^d\lambda^{-1}\|f\|_1.$
		\item[{\rm{(ii)}}] $g=qfq+\sum_kp_kf_kp_k,\ \
			\|g\|_1\leq\|f\|_1,\ \  \|g\|_\infty\leq2^d\lambda.$

		\item[{\rm{(iii)}}]
		$b=\sum_kb_k$, where $b_k=p_k(f-f_k)p_k$. For $Q\in\mathcal Q_k$,
		$
		\int_Qb_k(y)\,dy=0;
		$
		and for $x,y$ with $y\in513Q_{x,k}$,
		$\zeta(x)b_k(y)\zeta(x)=0$. Moreover, $\sum_k\|b_k\|_1\leq 2\|f\|_1$.
		\item[{\rm{(iv)}}]
		$h=g_{\mathrm{off}}+b_{\mathrm{off}}$, where
		\begin{equation*}
			\begin{split}
				g_{\mathrm{off}}=\sum_{r\geq1}\sum_kg_{k,r}&=\sum_{r\geq1}\sum_k\big[p_kdf_{k+r}q_{k+r-1}
				+q_{k+r-1}df_{k+r}p_k\big],\\
				b_{\mathrm{off}}=\sum_{r\geq1}\sum_kb_{k,r}&=\sum_{r\geq1}\sum_k\big[p_k(f-f_{k+r})p_{k+r}+p_{k+r}(f-f_{k+r})p_k\big].
			\end{split}
		\end{equation*}
		\item[{\rm{(v)}}]
		For the off-diagonal good term,
		\begin{equation*}
			\sup_{r\geq1}\sum_k\|g_{k,r}\|_2^2\leq C_d\lambda\|f\|_1.
		\end{equation*}
		\item[{\rm{(vi)}}]
		For the off-diagonal bad term,
		$$\sup_{r\geq1}\sum_k\|b_{k,r}\|_1\leq C_d\|f\|_1.$$
	\end{enumerate}
\end{thm}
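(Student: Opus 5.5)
The plan is to reproduce Parcet's argument from \cite{Parcet2009}. It rests on two facts: the Cuculescu projections give the resolution $\1_\n=q+\sum_k p_k$, and $f$ is the martingale limit of its conditional expectations $f_k$. First, sandwich $f$ between this resolution on both sides:
\[
f=\Big(q+\sum_i p_i\Big)f\Big(q+\sum_j p_j\Big).
\]
We then split the pairs according to whether $i=j$ or $i\neq j$. Writing $j=i+r$ or $i=j+r$ with $r\ge1$ for the off-diagonal pairs, and using $f=f_k+(f-f_k)$ with $k=\max(i,j)$, we get the decomposition. The diagonal terms $p_kfp_k=p_kf_kp_k+p_k(f-f_k)p_k$ produce $g$ and $b_k$. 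The off-diagonal pieces $p_k(f-f_{k+r})p_{k+r}$ give $b_{k,r}$. The remaining pieces $p_kf_{k+r}p_{k+r}$ and $qfp_k$ are regrouped via $p_{k+r}=q_{k+r-1}-q_{k+r}$ and a telescoping in $r$; this rewrites $\sum_{j>k}p_kf_jp_j+p_kfq$ as $\sum_{r\ge1}p_k\,df_{k+r}\,q_{k+r-1}$. That telescoping identity is the algebraic heart of the construction, and it must be checked carefully, with convergence justified by $f\in\mathcal N_{c,+}$ (there is a finite starting index $m_\lambda(f)$ and $f_k\to f$).

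Properties (i)--(iii) are then routine.

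\emph{Property (i).} By \eqref{eq:pkqk}, $\varphi(p_Q)\le\lambda^{-1}\varphi(p_Qf_Qp_Q)\cdot|Q|^{-1}$ summed appropriately. This gives
\[
\varphi\Big(\bigvee_Q p_Q\1_{513Q}\Big)\le 513^d\sum_k\varphi(p_k)\le 513^d\lambda^{-1}\|f\|_1,
\]
using $\sum_k\varphi(p_kf_kp_k)=\sum_k\varphi(p_kf)\le\|f\|_1$. The last step holds because $p_k\in\mathcal N_k$ and $\mathsf E_k$ is trace preserving.

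\emph{Property (ii).} The bound $\|g\|_\infty\le2^d\lambda$ follows from the orthogonality of the $p_k$ and $q$, together with $qfq\le\lambda q$, obtained as the limit of $q_kf_kq_k\le\lambda q_k$. The $L_1$ bound follows by positivity and traciality.

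\emph{Property (iii).} The mean-zero property holds because $p_k$ is constant on $Q\in\Q_k$. The support property holds because $\zeta\le\1-p_Q\1_{513Q}$, so $\zeta(x)p_Q=0$ for $x\in513Q$. Finally, $\|b_k\|_1\le\|p_kfp_k\|_1+\|p_kf_kp_k\|_1=2\varphi(p_kf)$.

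\emph{Property (vi).} For a fixed $r$, Hölder's inequality gives
\[
\|p_k(f-f_{k+r})p_{k+r}\|_1\le\|p_k(f-f_{k+r})\|_?\cdots .
\]
Instead, we factor $p_kfp_{k+r}=(p_kf^{1/2})(f^{1/2}p_{k+r})$ and apply Cauchy--Schwarz. This yields
\[
\|p_kfp_{k+r}\|_1\le\varphi(p_kf)^{1/2}\varphi(p_{k+r}f)^{1/2},
\]
and similarly for $f_{k+r}$, using that $p_{k+r},p_k\in\mathcal N_{k+r}$. Summing over $k$ and applying Cauchy--Schwarz in $k$ bounds everything by $\sum_k\varphi(p_kf)\le\|f\|_1$.

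\emph{Property (v).} This is the main obstacle. We would expand
\[
\|p_k\,df_{k+r}\,q_{k+r-1}\|_2^2=\varphi\big(p_k\,df_{k+r}\,q_{k+r-1}\,df_{k+r}\,p_k\big).
\]
The key input is the pointwise-in-cubes bound $q_{k+r-1}f_{k+r}q_{k+r-1}\le 2^d\lambda$ together with $q_{k+r-1}f_{k+r-1}q_{k+r-1}\le\lambda$, which come from \eqref{eq:pkqk} and the doubling of dyadic averages, $f_{j}\le 2^d f_{j-1}$ on cubes. From these we get $\|q_{k+r-1}\,df_{k+r}\|_\infty$-type control after writing $df=f_{k+r}-f_{k+r-1}$ and using $f\ge0$. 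Concretely, we bound $df_{k+r}\,q_{k+r-1}\,df_{k+r}$ via $|a-b|^2\le2(a^2+b^2)$ and the operator inequality $f_jq f_j\le\|q f_j q\|\,f_j$, which holds for positive $f_j$ after a square-root factorization. This gives
\[
\|p_k\,df_{k+r}\,q_{k+r-1}\|_2^2\lesssim 2^d\lambda\,\varphi(p_k(f_{k+r}+f_{k+r-1})),
\]
and summing over $k$ yields $C_d\lambda\|f\|_1$. The delicate point, which we would verify first, is precisely this factorization step: $f_{k+r}^{1/2}q_{k+r-1}f_{k+r}^{1/2}$ does not directly control $f_{k+r}q_{k+r-1}f_{k+r}$ without some commutation. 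We would handle it by using instead $q_{k+r-1}f_{k+r}q_{k+r-1}\le2^d\lambda q_{k+r-1}$, which yields $\|f_{k+r}^{1/2}q_{k+r-1}\|_\infty^2\le2^d\lambda$, and then writing $p_kf_{k+r}q\,f_{k+r}p_k=p_kf_{k+r}^{1/2}(f_{k+r}^{1/2}qf_{k+r}^{1/2})f_{k+r}^{1/2}p_k$.
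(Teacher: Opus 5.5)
Your proposal is correct and is essentially Parcet's proof, which is what the paper relies on here by citing \cite{Parcet2009}. Your argument for (v), via $\|p_kf_\ell q_{k+r-1}\|_2^2\le\varphi(p_kf_\ell p_k)\,\|q_{k+r-1}f_\ell q_{k+r-1}\|_\infty$, is exactly the computation the paper reuses in proving Lemma~\ref{HWX19}(i), and your telescoping is the one carried out in the proof of Lemma~\ref{HWX19}(ii); the telescoping also needs the bottom boundary term $p_kf_kq_k=0$, which you leave implicit and which holds because $p_k$ and $q_k$ are complementary spectral projections of $q_{k-1}f_kq_{k-1}$ inside $q_{k-1}$.
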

The proof of Theorem \ref{thm:decomposition} can be found in \cite{Parcet2009} (see also \cite{CCP2022}). Here we find two additional estimates of the off-diagonal parts which may have further applications. In particular, they will play an instrumental role in the present paper.
\begin{lem}\label{HWX19}
The following two statements hold.
	\begin{enumerate}
		\item[\rm{(i)}] For the off-diagonal good term,
       $$\sup_{r\geq1}\sum_k\|g_{k,r}\|_1\leq C_d\|f\|_1.$$
\item[\rm{(ii)}] Let $h_{k,r}=g_{k,r}+b_{k,r}$ for $k\in\Z$ and $r\geq1$. For every integer $B\geq8$, define
\begin{equation*}
  h_j^B=\sum_{r\geq1}
 \sum_{\substack{n\geq8\\B\leq n+r<2B}}h_{n-j,r}.
\end{equation*}
Then
$\sum_j\|h_j^B\|_1\leq12B\|f\|_1.$
\end{enumerate}
\end{lem}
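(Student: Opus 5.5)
The plan is to prove (i) by interpolating between Parcet's $L_2$-bound in Theorem \ref{thm:decomposition}(v) and the smallness of the supports of the $p_k$. Part (ii) should then follow by resumming the off-diagonal blocks along fixed ``upper'' scales before taking norms, so that a telescoping projection replaces a sum of projections.

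\emph{Step 1: part (i).} Each $g_{k,r}$ has the form $p_kX+Yp_k$ with $X=df_{k+r}q_{k+r-1}$ and $Y=X^*$. Since $p_k$ is a projection, Hölder's inequality gives
\[
\|p_kX\|_1=\|p_k\cdot p_kX\|_1\leq \varphi(p_k)^{1/2}\|p_kX\|_2 .
\]
The quantity $\|p_kdf_{k+r}q_{k+r-1}\|_2$ is exactly one of the two pieces controlled in Parcet's proof of (v). Indeed, the two summands of $g_{k,r}$ have orthogonal left/right supports, so $\|g_{k,r}\|_2^2$ dominates each piece. Hence $\|g_{k,r}\|_1\leq 2\varphi(p_k)^{1/2}\|g_{k,r}\|_2$. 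Cauchy--Schwarz in $k$ then gives
\[
\sum_k\|g_{k,r}\|_1\leq 2\Big(\sum_k\varphi(p_k)\Big)^{1/2}\Big(\sum_k\|g_{k,r}\|_2^2\Big)^{1/2}.
\]
By \eqref{eq:pkqk}, $\varphi(p_k)\leq\lambda^{-1}\varphi(p_kf_kp_k)=\lambda^{-1}\varphi(p_kf)$, and therefore $\sum_k\varphi(p_k)\leq\lambda^{-1}\|f\|_1$. Combining this with (v) gives $\sum_k\|g_{k,r}\|_1\leq C_d(\lambda^{-1}\|f\|_1\cdot\lambda\|f\|_1)^{1/2}=C_d\|f\|_1$, uniformly in $r$.

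\emph{Step 2: regrouping in (ii).} Fix $j$ and write $\ell=k+r$ for the larger of the two scales of $h_{k,r}$, where $k=n-j$. The constraint $B\leq n+r<2B$ becomes $B\leq \ell+j<2B$. For fixed $(j,\ell)$ the admissible $k$ form the interval $k\leq \ell+j-8-j$, $k<\ell$, so the sum over $r$ collapses into a projection
\[
P_{j,\ell}=\sum_{k\in I_{j,\ell}}p_k\leq \mathbf 1_\n .
\]
Thus $\sum_r h_{\ell-r,r}$ equals
\[
P_{j,\ell}\big[(f-f_\ell)p_\ell+df_\ell q_{\ell-1}\big]+\big[p_\ell(f-f_\ell)+q_{\ell-1}df_\ell\big]P_{j,\ell}.
\]
The essential point, and the reason the statement insists on $h=g_{\mathrm{off}}+b_{\mathrm{off}}$ rather than treating $g$ and $b$ separately, is that no sum over $r$ survives. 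We have
\[
\|h_j^B\|_1\leq\sum_{\ell}\Big(2\big\|P_{j,\ell}(f-f_\ell)p_\ell\big\|_1+2\big\|P_{j,\ell}\,df_\ell\,q_{\ell-1}\big\|_1\Big),
\]
where only $\ell\in[B-j,2B-j)$ contribute. Summing over $j$, each $\ell$ occurs for at most $B$ values of $j$. It therefore remains to show
\[
\sum_\ell\sup_{P}\Big(\|P(f-f_\ell)p_\ell\|_1+\|P\,df_\ell\,q_{\ell-1}\|_1\Big)\leq 3\|f\|_1 ,
\]
where the supremum runs over projections $P=\sum_{k\in I}p_k$ built from scales $k<\ell$. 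With the factor $2$ from the adjoints, this yields $12B\|f\|_1$.

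\emph{Step 3: the main obstacle.} The obstacle is the one-sided bounds in the last display, uniformly over $P$. For the bad part I would write $\|P(f-f_\ell)p_\ell\|_1\leq\|(f-f_\ell)p_\ell\|_1$ and apply Hölder in the form
\[
\|a\,p_\ell\|_1\leq \|a^{1/2}\mathbf 1_{E_\ell}\|_2\,\|a^{1/2}p_\ell\|_2 \qquad(a\geq0),
\]
for $a=f$ and $a=f_\ell$, where $\mathbf 1_{E_\ell}$ is the central, $\sigma_\ell$-measurable support of $p_\ell$. I would then use $\varphi(p_\ell f)=\varphi(p_\ell f_\ell)\leq 2^d\lambda\varphi(p_\ell)$ together with Step 1's trick of trading $\varphi(p_\ell)$ against $\lambda^{-1}\varphi(p_\ell f)$. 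For the good part, $P\leq \mathbf 1_\n-q_{\ell-1}$, so one can again insert $\varphi(P)^{1/2}$ and use the $L_2$-bound on $df_\ell q_{\ell-1}$ from Parcet's argument, namely $q_{\ell-1}f_{\ell-1}q_{\ell-1}\leq\lambda$. If the direct Hölder estimates do not sum in $\ell$, the fallback is to reproduce Parcet's proof of Theorem \ref{thm:decomposition}(vi) with the extra left factor $P$. That proof only uses $\|P\|_\infty\leq1$ and the $\sigma_{\ell}$-measurability of $p_\ell$ and $q_{\ell-1}$, so it should give the same constant.
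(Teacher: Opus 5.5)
Your part (i) is correct and is the paper's argument: split $g_{k,r}=a_{k,r}+a_{k,r}^*$, insert $\varphi(p_k)^{1/2}$, apply Cauchy--Schwarz in $k$, and use $\lambda\sum_k\varphi(p_k)\leq\|f\|_1$. The paper re-derives $\|a_{k,r}\|_2^2\leq C_d\lambda\varphi(p_kfp_k)$ instead of quoting Theorem~\ref{thm:decomposition}(v), which changes nothing.

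Part (ii) has a genuine gap. The whole content is deferred to the inequality in Step~3, and the estimates you propose cannot give it. You take $L_1$ norms separately for each larger scale $\ell$. H\"older then gives $\|P_{j,\ell}fp_\ell\|_1\leq\varphi(P_{j,\ell}fP_{j,\ell})^{1/2}\varphi(p_\ell fp_\ell)^{1/2}$. Inserting $\varphi(P)^{1/2}$ and using $q_{\ell-1}f_\ell q_{\ell-1}\leq2^d\lambda q_{\ell-1}$ gives $\|P_{j,\ell}\,df_\ell\,q_{\ell-1}\|_1\leq C_d\varphi(P_{j,\ell}fP_{j,\ell})$. But $\varphi(P_{j,\ell}fP_{j,\ell})=\sum_{k\in I_{j,\ell}}\varphi(p_kfp_k)$, and each $k$ lies in $I_{j,\ell}$ for about $B$ values of $\ell$. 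Summing over $\ell$ and $j$ therefore yields only $B^{3/2}\|f\|_1$ for the bad part and $B^2\|f\|_1$ for the good part. These are exactly the extra logarithmic losses that Remark~\ref{rk:sum of g and b} shows must be avoided. Your passage to $\sup_P$ and $\sum_{\ell\in\Z}$ also discards the window, so the bounds are not even uniform in the number of active scales.

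The $\mathbf 1_{E_\ell}$ variant is worse. Its factor $(\int_{E_\ell}\tau(f))^{1/2}$ is not controlled by any scale-$\ell$ quantity, since the sets $E_\ell$ can coincide for many $\ell$ while the $p_Q$ have tiny trace. Trading $\varphi(p_\ell)$ against $\lambda^{-1}\varphi(p_\ell f)$ does not touch that factor.

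Your fallback fails for the same reason. In Parcet's proof of (vi), each $p_k$ is paired with a single $p_{k+r}$, so the Cauchy--Schwarz sums close. Here each $p_k$ sits inside $\sim B$ of the $P_{j,\ell}$. Since the H\"older step is attained for locally rank-one $f$, the problem is structural: the norm must not be taken inside the sum over $\ell$. Note also that your regrouping works verbatim for $g_{\mathrm{off}}$ and $b_{\mathrm{off}}$ separately, so it never uses their sum.

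The missing idea is a cancellation in the other direction: fix the smaller scale $k$ and sum over the larger one. Using $p_\ell=q_{\ell-1}-q_\ell$,
\[
p_k(f-f_\ell)p_\ell+p_k\,df_\ell\,q_{\ell-1}=p_kfp_\ell+p_kf_\ell q_\ell-p_kf_{\ell-1}q_{\ell-1}.
\]
So the good pieces telescope against the $f_\ell$-part of the bad pieces, and $p_kf_kq_k=0$ leaves only the boundary term $p_kf_mq_m$. Summing over the whole triangle $a\leq k<\ell\leq m$ and adding adjoints gives, with $P=\sum_{k=a}^mp_k$,
\[
Z(a,m)=PfP-\sum_{k=a}^mp_kfp_k+Pf_mq_m+q_mf_mP.
\]
The first two terms are positive, so their norms are traces. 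For the boundary term, $q_mf_mq_m\leq\lambda q_m$ and $\lambda\varphi(P)\leq\varphi(PfP)$ give $\|Pf_mq_m\|_1\leq(\lambda\varphi(P)\varphi(PfP))^{1/2}\leq\varphi(PfP)$. Hence $\|Z(a,m)\|_1\leq4\sum_{k=a}^m\varphi(p_kfp_k)$. Writing $h_j^B=Z(8-j,2B-1-j)-Z(8-j,B-1-j)$ and summing in $j$ gives $12B\|f\|_1$. The point is that $h_j^B$ is a difference of two full triangles, whose norm is taken only after the telescoping and the positivity of $PfP$ have been used. Slicing the triangle by $\ell$, as you do, destroys both.
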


	\begin{proof}
(i) Fix $r\geq1$. Put
\[
a_{k,r}=p_k(f_{k+r}-f_{k+r-1})q_{k+r-1}.
\]
Then $g_{k,r}=a_{k,r}+a_{k,r}^*$. Since $p_kq_{k+r-1}=0$, one has $\|g_{k,r}\|_2^2=2\|a_{k,r}\|_2^2$. Furthermore, by the H\"older inequality and the fact that $q_{k+r-1}f_{\ell}q_{k+r-1}\leq C_d\lambda$ for $\ell\in\{k+r,k+r-1\}$, we have
$$\|p_kf_{\ell}q_{k+r-1}\|_2^2\leq\|p_kf_{\ell}p_k\|_1\|q_{k+r-1}f_{\ell}q_{k+r-1}\|_\infty\leq C_d\lambda\varphi(p_kfp_k).$$
Consequently,
\[
\|a_{k,r}\|_2^2\leq C_d\lambda\varphi(p_kfp_k).
\]
It then follows from the Cauchy-Schwarz inequality and the identity $p_ka_{k,r}=a_{k,r}$ that
\[
\begin{aligned}
	\sum_k\|g_{k,r}\|_1
	&\leq2\sum_k\varphi(p_k)^{1/2}\|a_{k,r}\|_2\\
	&\leq2\Big(\sum_k\varphi(p_k)\Big)^{1/2}
	\Big(\sum_k\|a_{k,r}\|_2^2\Big)^{1/2}
	\leq C_d\|f\|_1.
	 \end{aligned}
	 \]
	 This proves (i).

(ii) For an integer $s\geq8$, define
\[
 Z^{s}_j=\sum_{n=8}^{s-1}\sum_{r=1}^{s-n}h_{n-j,r},
\]
with $Z_{j}^{7}=Z_{j}^{8}=0$. Then reindexing the finite sums gives
\[
Z^{s}_j=\sum_{\substack{n\ge8,\ r\ge1\\n+r\le s}}h_{n-j,r},
\]
which implies that
$ h_j^B=Z_j^{2B-1}- Z_j^{B-1}.$

We claim that the proof will be finished if one can show
\begin{equation}\label{eq:triangular-budget11}
\sum_j\|Z_j^s\|_1\leq4(s-7)\|f\|_1,\qquad s\geq7.
\end{equation}
Indeed, this estimate implies
\[
 \sum_j\|h_j^B\|_1
 \leq4\bigl[(2B-8)+(B-8)\bigr]\|f\|_1
 \leq12B\|f\|_1.
\]

To prove (\ref{eq:triangular-budget11}), for integers $a\leq m$, set
\[
 Z(a,m)=\sum_{a\leq k<l\leq m}h_{k,l-k}.
\]
Then by definition,   $Z_j^s=Z(8-j,s-j)$. Hence, we are reduced to showing
\begin{equation}\label{eq:triangular-budget}
 \|Z(a,m)\|_1\leq4\sum_{k=a}^m\varphi(p_kfp_k).
\end{equation}
In fact, since $\sum_jp_j\leq\textbf{1}_{\n}$, it follows that
\[
 \sum_j\|Z_j^s\|_1\leq
 4\sum_j\sum_{n=8}^{s}\varphi(p_{n-j}fp_{n-j})\leq4(s-7)\|f\|_1.
\]

It remains to show (\ref{eq:triangular-budget}).  Observe that
\begin{equation}\label{eq:triangular-identity}
 \begin{aligned}
 Z(a,m)={}&P(a,m)fP(a,m)-\sum_{k=a}^mp_kfp_k
 +P(a,m)f_mq_m+q_mf_mP(a,m),
 \end{aligned}
\end{equation}
where $P(a,m)=\sum_{k=a}^m p_k$. To see this, since $p_\ell=q_{\ell-1}-q_\ell$,
$$a_{k,\ell}:=p_k(f-f_\ell)p_\ell+p_k(f_\ell-f_{\ell-1})q_{\ell-1}
=p_kfp_\ell+p_kf_\ell q_\ell-p_kf_{\ell-1}q_{\ell-1}.$$
Fix $k$. We see that
\[
\sum_{\ell=k+1}^m
\bigl(p_kf_\ell q_\ell-p_kf_{\ell-1}q_{\ell-1}\bigr)
=p_kf_mq_m-p_kf_kq_k=p_kf_mq_m,
\]
where  we have used $p_kf_kq_k=0$.
This gives that
\begin{align*}
	Z(a,m)
	& =\sum_{a\le k<\ell\le m}(a_{k,\ell}+a^*_{k,\ell})\\
	& =\sum_{a\le k<\ell\le m}(p_kfp_\ell+p_\ell fp_k)+P(a,m)f_mq_m
+q_mf_mP(a,m),
\end{align*}
which completes the proof of (\ref{eq:triangular-identity}) since
\[
P(a,m)fP(a,m)
=\sum_{k=a}^mp_kfp_k
+\sum_{a\le k<l\le m}(p_kfp_l+p_lfp_k).
\]

On the one hand,
\[
 \|P(a,m)fP(a,m)\|_1+\Big\|\sum_{k=a}^mp_kfp_k\Big\|_1
=2\sum_{k=a}^m\varphi(p_kfp_k).
\]
On the other hand, by using the fact
$q_mf_mq_m\leq\lambda q_m$ and the H\"older inequality, we obtain
\begin{align*}
\|P(a,m)f_mq_m\|_2^2
	& \leq\|P(a,m)f_m^{1/2}\|_2^2
       \|f_m^{1/2}q_m\|_\infty^2\\
	& \leq\lambda\varphi(P(a,m)f_mP(a,m))
 =\lambda\varphi(P(a,m)fP(a,m)).
\end{align*}
As a consequence,
\begin{align*}
\|P(a,m)f_mq_m\|_1
	& \leq\varphi(P(a,m))^{1/2}\|P(a,m)f_mq_m\|_2\\
	& \leq[\lambda\varphi(P(a,m))\varphi(P(a,m)fP(a,m))]^{1/2}\\
 &\leq\varphi(P(a,m)fP(a,m))=\sum_{k=a}^m\varphi(p_kfp_k),
\end{align*}
where the last inequality follows from
\[
\lambda\varphi(P(a,m))
\le\sum_{k=a}^m\varphi(p_kfp_k)
=\varphi(P(a,m)fP(a,m)).
\]
Obviously, the adjoint term $q_mf_mP(a,m)$ satisfies the same  bound. This finishes the proof.
\end{proof}


\section{Intermediate estimates}\label{sec:microlocal}
Let $\{H_j\}$ be a family of functions with
\begin{equation*}
\text{supp}H_j\subset\{x:2^{j-2}\leq |x|\leq2^{j+2}\}.
\end{equation*}
We assume that each $H_j(r\theta)$ is smooth in $r$ and that the estimates 
\begin{equation*}
	\sup_{0\leq\ell\leq N}\sup_j r^{d+\ell}|\partial_r^\ell H_j(r\theta)|\leq \mathfrak M_N
\end{equation*}
hold uniformly in $\theta$ and $r$.

The following construction is the same as that in Seeger's paper \cite[Section~2]{Seeger1996JAMS}. For an integer $s\geq8$, choose a family \(\{e^s_{\nu}:\nu\in\Theta_s\}\) of unit vectors in \(S^{d-1}\) satisfying
\begin{enumerate}
	\item [(a)] $|e^s_{\nu}-e^s_{\nu^\prime}|\geq2^{-s-10}d^{-1}$ whenever $\nu\neq\nu^\prime$;

	\item [(b)] For each $\theta\in S^{d-1}$, there exists an  $e^s_{\nu}$ such that $
	|\theta-e^s_{\nu}|\leq2^{-s-1}.
	$
\end{enumerate}
Indeed, one can take a maximal set satisfying (a); then the condition (b) holds automatically by maximality. We may construct a finite measurable partition
\(\{E^s_{\nu}\}_\nu\) of \(S^{d-1}\) with $e^s_{\nu}\in E^s_{\nu}$. Observe that
$\#\Theta_s\leq C_d2^{s(d-1)}$; moreover
$$H_j(x)=\sum_{\nu}H_{j\nu}^s(x),\qquad H_{j\nu}^s(x)=H_j(x)\textbf{1}_{E_\nu^s}(x/|x|).$$

Let \(\psi\in C_c^\infty([-4,4])\) be real, $0\leq\psi\leq1$
and $\psi=1$ on $[-2,2]$. Define the operator $P_\nu^s$ by
$$	\widehat{P_\nu^s\eta}(\xi)=\pi_\nu^s(\xi)\widehat \eta(\xi),\qquad \pi_\nu^s(\xi)=\psi\!\left(2^{s/2}
\frac{\langle\xi,e_\nu^s\rangle}{|\xi|}\right).$$

The basic splitting is
\begin{equation*}
	H_j=\Gamma_j^s+(H_j-\Gamma_j^s),
\end{equation*}
where $\Gamma_j^s=\sum_\nu P_\nu^sH_{j\nu}^s$.

We now present three lemmas, which are crucial to the use of noncommutative Calderón-Zygmund decomposition in the next section. In the following, we fix $f\in\n_{c,+}$ and $\lambda>0$.
The first lemma is an improvement  of \cite[Lemma~2.1]{Seeger1996JAMS}, which also works for the off-diagonal bad function.
\begin{lem}\label{badfunction}
	Let $n, s\geq8$ and $r\geq0$ be such that $8\leq s\leq n+r$.
	For each $j\in\Z$, set
	$X_j=p_{n-j}F_jp_{n+r-j}$, where $F_j\in\{f,f_{n+r-j}\}$. Then
	\[
	\Big\|\sum_{j}\Gamma_j^s*X_j\Big\|_2
	\leq C_d\sqrt{\lambda\|f\|_1}\sqrt{1+(s-n)_+}\,\mathfrak M_0\,2^{-s/4}.
	\]
The same bound holds with all $X_j$ replaced by $X_j^*$.
\end{lem}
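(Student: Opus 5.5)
We follow Seeger's $TT^*$ scheme, adapted to the operator-valued setting. Since each $\Gamma_j^s*X_j$ is a convolution of a scalar kernel with an $\M$-valued function, we expand
\[
\Big\|\sum_j\Gamma_j^s*X_j\Big\|_2^2=\sum_{i,j}\sum_{\nu,\nu'}\varphi\Big(\big(P_{\nu'}^sH^s_{i\nu'}*X_i\big)^*\big(P_\nu^sH^s_{j\nu}*X_j\big)\Big).
\]
First we use almost orthogonality of the multipliers $\pi_\nu^s$. Each $\xi$ lies in the support of at most $C_d2^{s(d-2)/2}$ of the $\pi^s_\nu$. By Plancherel applied to the $L_2(\M)$-valued functions, the sum over $\nu$ for fixed $i,j$ is therefore dominated by $C_d2^{s(d-2)/2}\sum_\nu\|\sum_j H^s_{j\nu}*X_j\|_2^2$, up to the usual Cauchy--Schwarz step. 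This reduces the problem to the directional kernels $H^s_{j\nu}$. Their supports lie in the sets $\{|x|\sim2^j,\ x/|x|\in E^s_\nu\}$, which are tubes of length $2^j$ and width $2^{j-s}$, and $\|H^s_{j\nu}\|_1\le C_d\mathfrak M_0 2^{-s(d-1)}$. The factor $2^{-s/4}$ will come from the remaining $2^{-s/2}$ in $\#\Theta_s\cdot 2^{s(d-2)/2}\cdot 2^{-s(d-1)}$ after the square root.

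Next we compute the pair terms
\[
\varphi\big((H^s_{i\nu}*X_i)^*(H^s_{j\nu}*X_j)\big)=\int\!\!\int \widetilde{H}^s_{i\nu}*H^s_{j\nu}(y-z)\,\tau\big(X_i(z)^*X_j(y)\big)\,dy\,dz,
\]
where $\widetilde H(x)=\overline{H(-x)}$. The commutative argument bounds $|X_i|$ by its size, and this is unavailable here. Instead we use the structure $X_j=p_{n-j}F_jp_{n+r-j}$, and the noncommutative H\"older inequality gives
\[
|\tau(X_i(z)^*X_j(y))|\le \|F_i(z)^{1/2}p_{n-i}(z)\|_{2}\,\|F_i^{1/2}p_{n+r-i}(z)\,p_{n+r-j}(y)F_j^{1/2}\|_{\infty}\,\|p_{n-j}(y)F_j(y)^{1/2}\|_{2}.
\]
When $F_j=f_{n+r-j}$, equation \eqref{eq:pkqk} controls the middle factor by $C_d\lambda$ after averaging over the dyadic cube of scale $n+r-j$. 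When $F_j=f$, we first pass to averages through the constancy of $p_{n+r-j}$ on cubes of $\Q_{n+r-j}$. The outer factors sum to $\varphi(p_kfp_k)$, so summing over $k$ gives $\le\|f\|_1$.

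The kernel $\widetilde H^s_{i\nu}*H^s_{j\nu}$ has $L_1$-norm $\lesssim\mathfrak M_0^2 2^{-2s(d-1)}$. Its $L_\infty$-norm is controlled by the tube geometry: it is roughly $\mathfrak M_0^2 2^{-2s(d-1)}|T_{\min(i,j)}|^{-1}$, using the smoothness in $r$ of $H_j$ to integrate by parts along $e_\nu$ when $|i-j|$ is large.

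We then split the pairs into $|i-j|\le B-n$ and $|i-j|>B-n$, with $B$ playing the role of $s$. The near-diagonal range contains at most $1+(s-n)_+$ values of $i-j$ for each $j$. Each of these is bounded by $C_d\lambda\mathfrak M_0^2 2^{-s/2}\varphi(p_{n-j}fp_{n-j})$, which produces the factor $\sqrt{1+(s-n)_+}$. In the far range, the cube of scale $2^{-(n-j)}$ carrying $p_{n-j}$ has side $2^{j-n}$. Because $s\le n+r$, this side is at most the tube width once $|i-j|>s-n$, so the kernel is essentially constant on it. The cancellation-free but geometric estimate $\int_{\text{tube}}\le 2^{-(|i-j|-(s-n))}$ then gives a convergent geometric series in $|i-j|$.

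The main obstacle is this middle step: obtaining a $\lambda$-bound for the mixed corner $p_{n+r-i}F_ip_{n+r-j}$ when $F=f$ rather than an average, and when the two stopping scales differ. We would first try to handle it by inserting $\mathsf E_{n+r-\min(i,j)}$ on the cube scale where the kernel is roughly constant, and then applying \eqref{eq:pkqk}. The case of $X_j^*$ is symmetric, with the roles of $p_{n-j}$ and $p_{n+r-j}$ exchanged.
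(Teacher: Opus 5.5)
Your skeleton matches the paper's: the Plancherel reduction to the pieces $H^s_{j\nu}$, the $TT^*$ expansion, the split at $\varrho=(s-n)_+$, and the use of $s\leq n+r$ and $j-i>\varrho$ to fit small cubes inside tubes of width $2^{j-s}$. But the noncommutative core is missing, and the fix you sketch cannot work.

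\textbf{The H\"older step.} Your H\"older step splits each of $X_i,X_j$ pointwise in $(y,z)$ into an $L_2$ half (the $p_{n-\cdot}$ corner) and an $L_\infty$ half (the $p_{n+r-\cdot}$ corner). For $F=f$ the factor $\|p_{n+r-j}(y)f(y)p_{n+r-j}(y)\|_\infty$ is not controlled.
\begin{itemize}
\item \eqref{eq:pkqk} only gives the operator inequality $\int_Qp_Qfp_Q=|Q|p_Qf_Qp_Q\leq2^d\lambda|Q|p_Q$. This bounds the norm of an average.
\item Your factorization needs the average of norms, $\int_Q\|p_Qf(y)p_Q\|_\infty\,dy$. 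This can be arbitrarily larger: take $f(y)$ a large multiple of a rank-one projection rotating with $y$.
\item Once the trace is split into a product of norms, the expression is no longer linear in $F$. So inserting $\mathsf E_{n+r-\min(i,j)}$ is not available.
\end{itemize}
The paper keeps linearity. For the near pairs it bounds $\|Z_j\|_2^2$ via the row/column H\"older inequality
\[
\|Z_j(x)\|_{L_2(\M)}^2\leq\tau\bigl((|H^s_{j\nu}|*p_{n-j}F_jp_{n-j})(x)\bigr)\,\bigl\|(|H^s_{j\nu}|*p_{n+r-j}F_jp_{n+r-j})(x)\bigr\|_\infty .
\]
Here the operator norm is taken \emph{after} integration, so each cube of $\Q_{n+r-j}$ contributes $|Q|p_Qf_Qp_Q$. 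For the far pairs it uses
\[
|\tau(X_i(y)^*X_j(z))|\leq\tau(E_{i,j})^{1/2}\tau(G_{i,j})^{1/2}\leq\tfrac12[\tau(E_{i,j})+\tau(G_{i,j})],
\]
with $E_{i,j}=(p_{n+r-i}F_ip_{n+r-i})(y)\,(p_{n+r-j}F_jp_{n+r-j})(z)$ and $G_{i,j}$ the same with $n$ in place of $n+r$. These are traces of products of two positive operators, linear in each factor.

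\textbf{The far range.} This linearity is also what makes the far range summable, and there your mechanism fails: there is no decay in $j-i$.
\begin{itemize}
\item The only kernel input is Young's bound $|\widetilde{H^s_{i\nu}}*H^s_{j\nu}|\leq C_d\mathfrak M_0^22^{-s(d-1)}2^{-jd}\mathbf 1_{\mathcal R^s_{j\nu}}$ for $i\leq j$. This corresponds to $|T_{\max(i,j)}|^{-1}$, not $|T_{\min(i,j)}|^{-1}$.
\item No integration by parts should enter, since the bound to be proved involves only $\mathfrak M_0$.
\item Each far pair therefore contributes about $\lambda\mathfrak M_0^22^{-2s(d-1)}\varphi(p_{n-j}fp_{n-j})$, with no gain in $j-i$.
\item Your symmetric factorization keeps both $p_{n-i}$ and $p_{n-j}$ in $L_2$, so it gives no way to sum over $i$, even for $F=f_{n+r-j}$.
\end{itemize}
The paper instead puts the whole $i$-block in $L_\infty$ and the $j$-block in $L_1$. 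It then sums over $i$ inside the operator norm, using the orthogonality $\sum_ip_{n-i}\leq\mathbf 1$ of the Cuculescu projections:
\[
\Big\|\sum_{i<j-\varrho}\int_{z+\mathcal R^s_{j\nu}}(p_{n-i}F_ip_{n-i})(y)\,dy\Big\|_\infty\leq C_d\lambda\,2^{jd}2^{-s(d-1)},
\]
and likewise with $n+r$ in place of $n$. Here $j-i>\varrho$ guarantees $2^{i-n}\leq2^{j-s}$. The near range then costs $2\varrho+1$ and the far range costs nothing.

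\textbf{A smaller slip.} The $e^s_\nu$ are $2^{-s}$-separated and $\pi^s_\nu$ cuts off at angle $2^{-s/2}$. So $\sup_\xi\sum_\nu|\pi^s_\nu(\xi)|^2\leq C_d2^{s(d-3/2)}$, not $2^{s(d-2)/2}$. It is $2^{s(d-3/2)}\cdot\#\Theta_s\cdot2^{-2s(d-1)}=2^{-s/2}$ that yields $2^{-s/4}$.
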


The second $L_2$-estimate involves the off-diagonal  good function, which did not appear in Seeger's paper.
\begin{lem}\label{mc:prop:paired}
Let $n\geq8$, $r\geq1$ and $s\geq8$ be integers. Then
\[
 \Big\|\sum_{j}\Gamma_j^s*g_{n-j,r}\Big\|_2
 \leq C_d\mathfrak M_0\sqrt{\lambda\|f\|_1}\,2^{-s/4}.
\]
\end{lem}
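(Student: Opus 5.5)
Write $g_{n-j,r}=a_j+a_j^*$ with
\[
a_j=p_{n-j}\,df_{n+r-j}\,q_{n+r-j-1}.
\]
By the triangle inequality it suffices to bound $\|\sum_j\Gamma_j^s*a_j\|_2$ and the analogous sum with $a_j^*$. The plan is to follow Seeger's $TT^*$ scheme for $\Gamma^s$, but to replace the support/cancellation of bad functions by almost orthogonality coming from martingale differences.

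\textbf{Step 1: reduce to a square function.} Expand
\[
\Big\|\sum_j\Gamma_j^s*a_j\Big\|_2^2=\sum_{j,j'}\varphi\big((\Gamma_j^s*a_j)^*(\Gamma_{j'}^s*a_{j'})\big).
\]
On the Fourier side this is $\int\tau\big(\widehat{a_j}(\xi)^*\,\overline{\widehat{\Gamma_j^s}(\xi)}\,\widehat{\Gamma_{j'}^s}(\xi)\,\widehat{a_{j'}}(\xi)\big)\,d\xi$, because the scalar multipliers commute with $\M$.

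\textbf{Step 2: bound the multipliers.} Seeger's estimates give
\[
|\widehat{\Gamma_j^s}(\xi)|\le C_d\,\mathfrak M_0\min\{2^j|\xi|,(2^j|\xi|)^{-1}\}^{\epsilon}.
\]
For the diagonal-type bound one uses the finite overlap of the angular cutoffs $\pi_\nu^s$ (each $\xi$ meets $O(2^{s(d-1)/2})$ of them) together with $\#\Theta_s\lesssim 2^{s(d-1)}$ and $|E_\nu^s|\lesssim 2^{-s(d-1)}$. A Cauchy--Schwarz over $\nu$ then yields
\[
\sup_\xi\sum_j|\widehat{\Gamma_j^s}(\xi)|^2\le C_d\,\mathfrak M_0^2\,2^{-s/2}.
\]
This is exactly the $2^{-s/4}$ gain after taking square roots. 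I would take this as given from the construction, as in Seeger's Lemma~2.1. The scalar kernel bounds need no noncommutativity.

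\textbf{Step 3: localize each $a_j$ in frequency.} The main new point is that $a_j$ is essentially localized at frequency $\sim 2^{-(n+r-j)}$. Indeed, $df_{n+r-j}$ is a martingale difference, and $p_{n-j}$, $q_{n+r-j-1}$ are $\sigma_{n+r-j-1}$-measurable. Hence $a_j$ is itself a martingale difference at level $n+r-j$: it is measurable at that level and $\mathsf E_{n+r-j-1}a_j=0$. Martingale differences at distinct levels are orthogonal in $L_2(\n)$, so
\[
\Big\|\sum_j a_j\Big\|_2^2=\sum_j\|a_j\|_2^2\le C_d\lambda\|f\|_1,
\]
where the last bound is Theorem~\ref{thm:decomposition}(v) (with Lemma~\ref{HWX19}(i)'s argument). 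To exploit this I would write $\Gamma_j^s*a_j=\sum_{k}\Gamma_j^s*a_{j+k}$-type cross terms. More precisely, I would bound
\[
|\langle\Gamma_j^s*a_j,\Gamma_{j'}^s*a_{j'}\rangle|\le C_d\,\mathfrak M_0^2\,2^{-s/2}\,2^{-\epsilon|j-j'|}\|a_j\|_2\|a_{j'}\|_2.
\]
This uses the frequency decay of $\widehat{\Gamma^s_j}$ away from $|\xi|\sim2^{-j}$ and the fact that a level-$(n+r-j)$ martingale difference has Fourier mass concentrated near $|\xi|\gtrsim 2^{n+r-j}$. Since $n+r$ is fixed, the dyadic martingale and Fourier scales are aligned, and the operators $\Gamma_j^s\circ\mathsf d_{n+r-j}$ form an almost orthogonal (Cotlar--Stein) family. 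Schur's test on $2^{-\epsilon|j-j'|}$ then gives
\[
\Big\|\sum_j\Gamma_j^s*a_j\Big\|_2^2\le C_d\,\mathfrak M_0^2\,2^{-s/2}\sum_j\|a_j\|_2^2\le C_d\,\mathfrak M_0^2\,2^{-s/2}\,\lambda\|f\|_1.
\]

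\textbf{Main obstacle.} The hard step is the almost orthogonality estimate, since dyadic martingale differences are not sharply frequency localized. I would handle it by Cotlar--Stein applied to the operators $T_j=\Gamma_j^s*\,\mathsf d_{n+r-j}$, where $\mathsf d_k=\mathsf E_k-\mathsf E_{k-1}$. For $T_jT_{j'}^*$ and $T_j^*T_{j'}$ I would use two kernel facts. First, the kernel of $\Gamma_j^s*\Gamma_{j'}^{s*}$ is smooth at scale $2^{\max(j,j')}$ up to $2^{s}$-losses, with an $L^1$ norm controlled by $\mathfrak M_0^2 2^{-s/2}$ via Plancherel. Second, $\mathsf d_k$ annihilates functions constant on level-$(k-1)$ cubes. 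A Hölder/mean-value estimate then produces the needed decay $2^{-\epsilon(|j-j'|-Cs)}$.

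The $s$-losses need care. For $|j-j'|\le Cs$ I would not seek decay, and instead use the cruder orthogonality bound from Step 2 directly. A further point is that the operators are applied to $\M$-valued functions, but they are scalar Fourier multipliers tensored with the identity on $\M$, so the $L_2(\n)$ bounds follow from the scalar ones.
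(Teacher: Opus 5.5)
There is a genuine gap, and it is in the step you flag as the main obstacle. You try to get the decay in $|j-j'|$ from frequency localization or smoothness of $\Gamma_j^s$, and neither is available. The bound $|\widehat{\Gamma_j^s}(\xi)|\le C_d\mathfrak M_0\min\{2^j|\xi|,(2^j|\xi|)^{-1}\}^{\epsilon}$ fails at low frequencies. No cancellation is assumed for $H_j$ (and $H_j^{B(s)}=\mathbf 1_{D^s}U_j^{B(s)}$ has none), while $\pi_\nu^s$ is homogeneous of degree $0$. So for a fixed direction $\omega$, $\widehat{\Gamma_j^s}(t\omega)\to\sum_\nu\pi_\nu^s(\omega)\int H_{j\nu}^s$ as $t\to0$. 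For $H_j(r\theta)=\chi(2^{-j}r)r^{-d}$ this limit is comparable to $2^{-s/2}$ for every $j$. Hence $\sup_\xi\sum_j|\widehat{\Gamma_j^s}(\xi)|^2=\infty$ in general, and Step~2 is false as stated.

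At high frequencies, $\pi_\nu^s$ keeps exactly the $\xi$ nearly orthogonal to $e_\nu^s$, where the radial regularity of $H_{j\nu}^s$ is of little use. In any case the lemma involves only the size bound $\mathfrak M_0$. So no decay of $\widehat{\Gamma_j^s}$, and no smoothness of the kernel of $\Gamma_j^s*\Gamma_{j'}^{s*}$, can be claimed in terms of $\mathfrak M_0$. Your Cotlar--Stein plan rests on exactly these kernel facts. There is also a scale mismatch: $\Gamma_j^s$ lives at frequency $2^{-j}$, whereas $g_{n-j,r}$ lives at frequency $2^{n+r-j}$. (Incidentally, each $\xi$ meets $O(2^{s(d-3/2)})$ of the cutoffs $\pi_\nu^s$, not $O(2^{s(d-1)/2})$.)

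The cross-term bound you write in Step~3 does hold, but for a different reason. The only kernel input you can use is $\sup_j\|\widehat{\Gamma_j^s}\|_\infty\le C_d\mathfrak M_0\,2^{-s/2}$, which follows from the overlap count and $\|H_{j\nu}^s\|_1\le C_d\mathfrak M_0\,2^{-s(d-1)}$. All of the almost orthogonality must therefore come from the $g_{n-j,r}$. For that you need both halves of their frequency localization at $2^{n+r-j}$, and you only invoke the lower one.

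The identity $\mathsf E_{n+r-j-1}g_{n-j,r}=0$ and the mean value theorem give $\|\mathcal L_k*g_{n-j,r}\|_2\le C_d\,2^{k-(n+r-j)}\|g_{n-j,r}\|_2$ for $k\le n+r-j$. The missing upper half comes from $\mathsf E_{n+r-j}g_{n-j,r}=g_{n-j,r}$. Being constant on the cubes of $\mathcal Q_{n+r-j}$, it satisfies $\|g_{n-j,r}(\cdot-y)-g_{n-j,r}\|_2\le C_d\min\{1,(2^{n+r-j}|y|)^{1/2}\}\|g_{n-j,r}\|_2$. Together with $\int\mathcal L_k=0$, this yields $\|\mathcal L_k*g_{n-j,r}\|_2\le C_d\,2^{-(k-(n+r-j))/2}\|g_{n-j,r}\|_2$ for $k>n+r-j$.

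Without this upper bound, the mean-zero property alone gives no decay for $T_j^*T_{j'}=\mathsf d_{n+r-j}\Gamma_j^{s*}\Gamma_{j'}^s\mathsf d_{n+r-j'}$, since a coarse-level difference may carry mass above the fine frequency. With both bounds, the paper argues as follows. It applies the Cauchy--Schwarz inequality in $\nu$ with \eqref{bd1232} as in Lemma~\ref{badfunction}, and uses $\|\widehat{H_{j\nu}^s}\|_\infty\le\|H_{j\nu}^s\|_1\le C_d\mathfrak M_0\,2^{-s(d-1)}$. It then concludes with Lemma~\ref{lem:almost-orthogonality}, taking $\mathcal S_k=\mathcal L_k$ and decay $\sigma(n-j+r-k)$. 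Your splitting $g_{n-j,r}=a_j+a_j^*$ is correct but unnecessary, since $g_{n-j,r}$ is itself a martingale difference at level $n+r-j$.
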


The last lemma is a minor improvement of \cite[Lemma~2.2]{Seeger1996JAMS}, which is applicable to both the off-diagonal good and bad functions.

	\begin{lem}\label{mc:thm:complement}
	Let $j\in\mathbb Z$, $s\geq8$ and $n\in\mathbb Z$ be such that $s\leq n+1$.
    	Suppose that $F_Q\in L_1(\mathcal N)$ is supported in a cube $Q$
	with side length at most $2^{j-n}$ and satisfies
	$\int_{Q}F_Q(x)\,dx=0$. Then for $N>d$,
	\[
	\|(H_j-\Gamma_j^s)*F_Q\|_1
	\leq C_{d,N}
	\left[
	\mathfrak M_0\,2^{-s/4}
	+\mathfrak M_N\,2^{(d-N/4)s}
	\right]\|F_Q\|_1.
	\]
\end{lem}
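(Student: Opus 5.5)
The plan is to follow Seeger's proof of \cite[Lemma~2.2]{Seeger1996JAMS}. The point is that the estimate is linear in $F_Q$ and uses only its support and mean zero, never its positivity or commutativity. We may therefore regard $F_Q$ as an $L_1(\M)$-valued Bochner integrable function and bound the convolution kernel in $L_1(\R^d)$ directly. Writing
\[
(H_j-\Gamma_j^s)*F_Q=\sum_\nu (I-P_\nu^s)H^s_{j\nu}*F_Q,
\]
and using $\int_Q F_Q=0$, we have
\[
(H_j-\Gamma_j^s)*F_Q(x)=\int_Q\bigl[\mathcal K(x-y)-\mathcal K(x-y_Q)\bigr]F_Q(y)\,dy,
\]
where $\mathcal K=\sum_\nu(I-P^s_\nu)H^s_{j\nu}$ and $y_Q$ is the center of $Q$. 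Minkowski's inequality in $L_1(\n)$ (the norm of $L_1(\M)$-valued Bochner integrals) then gives
\[
\|(H_j-\Gamma_j^s)*F_Q\|_1\le \sup_{|h|\le \sqrt d\,2^{j-n}}\sum_\nu\|(I-P_\nu^s)H^s_{j\nu}(\cdot-h)-(I-P^s_\nu)H^s_{j\nu}\|_{L_1(\R^d)}\,\|F_Q\|_1 .
\]
So it suffices to prove the scalar kernel estimate
\[
\sum_\nu\bigl\|(I-P^s_\nu)H^s_{j\nu}(\cdot-h)-(I-P^s_\nu)H^s_{j\nu}\bigr\|_{L_1}\lesssim \mathfrak M_0 2^{-s/4}+\mathfrak M_N2^{(d-N/4)s},\qquad |h|\lesssim 2^{j-n}\le 2^{j-s+1}.
\]

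By scaling, take $j=0$. Following Seeger, split $(I-P^s_\nu)H^s_{0\nu}$ with a Littlewood--Paley decomposition in $|\xi|$ at the threshold $2^{s}$, or a slightly adjusted one such as $2^{s(1-\epsilon)}$.

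\emph{Low frequencies} $|\xi|\lesssim 2^{s-1}$, or up to $2^{s}$ times a small factor. Here we use the translation difference and $|h|\lesssim 2^{-s+1}$. Since $s\le n+1$ gives $|h|\le 2^{1-n}\lesssim2^{-s}$, the gain is $|h||\xi|$. This has to be organized carefully: the frequency pieces at $2^k$ contribute $\min(1,2^{k}|h|)$ times the $L_1$ norm of $(I-P^s_\nu)$ applied to that piece, and the piece of $H^s_{0\nu}$ has $L_1$ norm $\lesssim \mathfrak M_0\sigma(E^s_\nu)$. The operators $P^s_\nu$ localized at frequency $2^k$ with $k\ge s/2$ have kernels bounded in $L_1$ uniformly, since the angular width $2^{-s/2}$ times $2^k$ is at least $1$. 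Pieces with $2^k\le 2^{s/2}$ are handled directly: there $I-P^s_\nu$ is at worst bounded in $L_1$ with constant $\lesssim 2^{(d-1)(s/2-k)}$ times something, giving the bounds below. Summing over $\nu$ uses $\sum_\nu\sigma(E_\nu^s)\lesssim1$. The net low-frequency contribution should be $\lesssim\mathfrak M_0(2^{-s/2}+\dots)$, which is $\lesssim\mathfrak M_0 2^{-s/4}$. Seeger's computation already gives this, and I would reproduce it with $h$ restricted as here. This is where the hypothesis $s\le n+1$ enters, the "minor improvement" being that one only needs $|h|\le 2^{j-s+1}$ rather than $|h|\le 2^{j-s}$.

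\emph{High frequencies} $|\xi|\gtrsim 2^{s}$. Here we do not use the difference; we bound each of the two terms separately. On the support of $1-\pi^s_\nu$ we have $|\langle\xi,e^s_\nu\rangle|\ge 2^{1-s/2}|\xi|$. Since $H^s_{0\nu}$ is supported in a cone of aperture $2^{-s}$ around $e^s_\nu$ and is smooth in $r$, its Fourier transform decays in the direction $e_\nu^s$. Concretely, integrate by parts $N$ times in $r$ along rays $\theta\in E^s_\nu$. Using $|\theta-e_\nu^s|\le 2^{-s}$ we have $|\langle\xi,\theta\rangle|\gtrsim 2^{-s/2}|\xi|$, and this yields
\[
|\widehat{H^s_{0\nu}}(\xi)|(1-\pi^s_\nu(\xi))\lesssim \mathfrak M_N\sigma(E_\nu^s)(2^{-s/2}|\xi|)^{-N}.
\]
Similar bounds hold for derivatives of the product, since derivatives of $\pi_\nu^s$ cost at most $2^{s/2}|\xi|^{-1}$. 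Converting these to $L_1$ kernel bounds via a standard estimate costs a factor $2^{sd}$ or so, which accounts for the dimensional loss. For $|\xi|\sim 2^k$ with $k\ge s$, summing gives $\mathfrak M_N 2^{sd}2^{-N(k-s/2)}$. Summing in $k\ge s$ and in $\nu$ yields $\mathfrak M_N 2^{(d-N/2)s}\le\mathfrak M_N2^{(d-N/4)s}$.

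The main obstacle is the bookkeeping in the intermediate frequency range $2^{s/2}\lesssim|\xi|\lesssim 2^{s}$, where neither the smallness of $h$ nor the integration by parts alone gives decay. I would treat it exactly as Seeger does, interpolating the two bounds so as to lose only $2^{-s/4}$. This is the reason the threshold in the statement is $2^{-s/4}$ rather than $2^{-s/2}$.
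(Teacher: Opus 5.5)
Your architecture is the paper's (and Seeger's): Littlewood--Paley pieces, the cancellation of $F_Q$ at low frequencies, and radial integration by parts at high frequencies. Your translation-difference formulation is equivalent to the paper's use of $\|\nabla\mathcal L_k\|_1\le C_d2^k$, which gives the factor $\min\{1,2^{j+k-n}\}$. However, as written the proposal does not establish the $\mathfrak M_0\,2^{-s/4}$ term. With your split at $|\xi|\sim2^{s}$ (after scaling to $j=0$), the gain $\min\{1,2^k|h|\}$ with $|h|\lesssim2^{-n}\le2^{1-s}$ is of order one for $k$ near $s$. So your low-frequency sum is only $O(\mathfrak M_0)$, not $O(\mathfrak M_0 2^{-s/2})$ as claimed. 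You then leave the range $2^{s/2}\lesssim|\xi|\lesssim2^{s}$ to an unspecified ``interpolation'', asserting that neither bound gives decay there. That assertion is false: your own estimate $(2^{-s/2}|\xi|)^{-N}$ already decays once $|\xi|\gg2^{s/2}$.

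The missing step is the right cut: split at $2^{j+k}=2^{3s/4}$, and there is no intermediate region. Use the uniform bound $\|\mathcal F^{-1}[\beta_k(1-\pi_\nu^s)\widehat{H^s_{j\nu}}]\|_1\le C_d\mathfrak M_0 2^{-s(d-1)}$ together with $\#\Theta_s\le C_d2^{s(d-1)}$. Then the low part is at most $C_d\mathfrak M_0\sum_{j+k\le3s/4}2^{j+k-n}\le C_d\mathfrak M_0 2^{3s/4-n}\le 2C_d\mathfrak M_0 2^{-s/4}$, using $n\ge s-1$. For the high part, Seeger's bound $C_{d,N}\mathfrak M_N2^{-s(d-1)}2^{-N(j+k-s/2)}(2^{d(j+k)}+2^{ds/2})$ gives at most $C_{d,N}\mathfrak M_N2^{Ns/2}\sum_{j+k>3s/4}2^{-(N-d)(j+k)}\le C_{d,N}\mathfrak M_N2^{(d-N/4)s}$. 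The exponents $-s/4$ and $d-N/4$ in the statement come precisely from this balance point, not from an interpolation.

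Separately, your claim that for $2^k\le2^{s/2}$ the operator $I-P^s_\nu$ costs a factor $2^{(d-1)(s/2-k)}$ is wrong. It would also be fatal, since the sum over $k\to-\infty$ would then diverge. Because $\pi_\nu^s$ is homogeneous of degree zero, $\beta_k\pi^s_\nu$ is an $L_1$-norm-preserving dilate of $\beta_0\pi^s_\nu$. The latter is a bump adapted to a $2^{-s/2}\times1\times\cdots\times1$ slab. Hence $\|\mathcal F^{-1}[\beta_k(1-\pi^s_\nu)]\|_1\le C_d$ uniformly in $k$, $s$ and $\nu$, which is what the low-frequency estimate needs.
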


We first prove Lemma \ref{mc:thm:complement} and postpone the proofs of Lemmas \ref{badfunction} and \ref{mc:prop:paired} to the next two subsections.
\begin{proof}[Proof of Lemma \ref{mc:thm:complement}.]
We follow Seeger's proof \cite[Lemma~2.2]{Seeger1996JAMS}.
    Let $\beta\in C^\infty(\R^d\setminus\{0\})$ be radial, supported in \(\{1/2\leq|\xi|\leq2\}\) such that $\sum_k\beta^2(2^{-k}\xi)=1$ for all $\xi\neq0$.  Denote $\mathcal L_k=\F^{-1}\beta_k$.  Consider the multiplier
\[
 m_{j\nu}^{sk}(\xi)
 =\beta_k(\xi)(1-\pi_\nu^s(\xi))\widehat {H^s_{j\nu}}(\xi).
\]
It then follows that
\[
 (H_j-\Gamma_j^s)*F_Q=\sum_\nu\sum_{k}
             \mathcal F^{-1}[m_{j\nu}^{sk}]*\mathcal L_k*F_Q.
\]

Using the cancellation of $F_Q$ and $\|\nabla \mathcal L_k\|_1\leq C_d2^k$, we obtain
\[
 \|\mathcal F^{-1}[m_{j\nu}^{sk}]*\mathcal L_k*F_Q\|_1
 \leq C_d\|\mathcal F^{-1}[m_{j\nu}^{sk}]\|_1
           \min\{1,2^{j+k-n}\}\|F_Q\|_1.
\]
Estimates (2.6) and (2.7) in \cite{Seeger1996JAMS} provide
\[
 \|\mathcal F^{-1}[m_{j\nu}^{sk}]\|_1
 \leq C_d\mathfrak M_0\,2^{-s(d-1)}
\]
and \[
 \|\mathcal F^{-1}[m_{j\nu}^{sk}]\|_1
 \leq C_{d,N}\mathfrak M_N\,2^{-s(d-1)}
       2^{-N(j+k-s/2)}
       \bigl(2^{d(j+k)}+2^{ds/2}\bigr)
\]
respectively.

On the one hand, we use Young's inequality to get
	\begin{align*}
	\sum_\nu\sum_{k:j+k\leq3s/4}\|\mathcal F^{-1}[m_{j\nu}^{sk}]*\mathcal L_k*F_Q\|_1
	& \leq	C_d\mathfrak M_0\,2^{-s(d-1)}\|F_Q\|_1\sum_\nu\sum_{k:j+k\leq3s/4}2^{j+k-n}\\
	& \leq 	C_d\mathfrak M_0\,2^{3s/4-n}\|F_Q\|_1\leq C_d\mathfrak M_0\,2^{-s/4}\|F_Q\|_1,
\end{align*}
where in the second inequality we also used the fact that $\#\Theta_s\leq C_d2^{s(d-1)}$, while the last inequality is a consequence of $n\geq s-1$.

On the other hand, using  $\|\mathcal L_k*F_Q\|_1\leq C_d\|F_Q\|_1$ and
	$\sum_\nu\sigma(E_\nu^s)=\sigma(S^{d-1})$, we obtain
	\begin{align*}
	\sum_\nu\sum_{k:j+k>3s/4}\|\mathcal F^{-1}[m_{j\nu}^{sk}]*\mathcal L_k*F_Q\|_1
	& \leq	C_{d,N}\mathfrak M_N\|F_Q\|_1
		2^{Ns/2}\sum_{k:j+k>3s/4}2^{-(N-d)(j+k)}\\
	& \leq 	C_{d,N}\mathfrak M_N\,2^{(d-N/4)s}\|F_Q\|_1.
\end{align*}
This finishes the proof by these two estimates.
\end{proof}

\subsection{Proof of Lemma \ref{badfunction}}


\begin{proof}[Proof of Lemma \ref{badfunction}.]
	By \cite{Seeger1996JAMS}, we know that
	\begin{equation}\label{bd1232}
	\sup_{\xi\neq0}\sum_\nu|\pi_\nu^s(\xi)|^2
	\leq C_d2^{s(d-3/2)}.
\end{equation}
Using  Plancherel's theorem, the Cauchy-Schwarz inequality and (\ref{bd1232}), we get
	\begin{align*}
		\Big\|\sum_{j}\Gamma_j^s*X_j\Big\|_2^2
		& \le\sup_{\xi\neq0}\sum_\nu |\pi_\nu^s(\xi)|^2\sum_\nu\Big\|\sum_{j}H_{j\nu}^{s}*X_j\Big\|_2^2\\
		& \le C_d2^{s(d-3/2)}\sum_\nu\Big\|\sum_{j}H_{j\nu}^{s}*X_j\Big\|_2^2.
	\end{align*}
We claim that it is enough to show 	for every $\nu$,
	\begin{equation}\label{eq:corner-principal}
		\Big\|\sum_{j}H_{j\nu}^{s}*X_j\Big\|_2^2
		\leq C_d\lambda(1+(s-n)_+)\mathfrak M_0^2
		2^{-2s(d-1)}\|f\|_1.
	\end{equation}
Indeed, with (\ref{eq:corner-principal}) in hand and noting $\#\Theta_s\leq C_d2^{s(d-1)}$, we have
\begin{align*}
	\Big\|\sum_{j}\Gamma_j^s*X_j\Big\|_2^2
	& \leq C_d2^{s(d-3/2)}2^{s(d-1)}\lambda (1+(s-n)_+)\mathfrak M_0^2 2^{-2s(d-1)}\|f\|_1\\ 	& =C_d\lambda\mathfrak M_0^22^{-s/2}(1+(s-n)_+)\|f\|_1.
\end{align*}

The following is devoted to the proof of (\ref{eq:corner-principal}).  Fix $\nu$ and write $Z_j=H_{j\nu}^{s}*X_j$.  Let $\varrho=(s-n)_+$.
We may decompose
\begin{equation}\label{eq:corner-principal11}
\Big\|\sum_jZ_j\Big\|_2^2=\sum_{j}\sum_{i:|i-j|\leq \varrho}\langle Z_i,Z_j\rangle+2\operatorname{Re}
\sum_j\sum_{\substack{i:i<j\\\ j-i>\varrho}}
\langle Z_i,Z_j\rangle.
\end{equation}

Consider the first term on the right-hand side of (\ref{eq:corner-principal11}).  By the Cauchy-Schwarz inequality,
\begin{align*}
\big|
\sum_{j}\sum_{i:|i-j|\leq \varrho}\langle Z_i,Z_j\rangle
\big|
&\le\frac12\sum_{|i-j|\le \varrho}
\bigl(\|Z_i\|_2^2+\|Z_j\|_2^2\bigr)\\
&\le(2\varrho+1)\sum_j\|Z_j\|_2^2.
\end{align*}
Hence in this case, it suffices to estimate $\sum_j\|Z_j\|_2^2$. Applying the H\"older inequality in Hilbert-valued noncommutative $L_p$-spaces (see e.g. \cite[Proposition~1.1]{Mei2007MemAMS}), one has for $x\in\R^d$,
\[
\|Z_j(x)\|_{L_2(\mathcal M)}^2
\le
\tau\bigl((|H_{j\nu}^s|*p_{n-j}F_jp_{n-j})(x)\bigr)
\bigl\|(|H_{j\nu}^s|*p_{n+r-j}F_jp_{n+r-j})(x)\bigr\|_\infty.
\]
Observe that $H_{j\nu}^s$ is supported in a symmetric rectangle $\mathcal R_{j\nu}^s$  centered at 0 with $d-1$ short sides of length $C2^{j-s}$ and one long side of length $C2^j$, where the long side is parallel to $e_\nu^s$. As a consequence,
\begin{equation}\label{cone-correlation1}
|\mathcal R^s_{j\nu}|\leq C_d2^{jd}2^{-s(d-1)}\qquad
|H_{j\nu}^s(z)|
\le C_d\mathfrak M_0\,2^{-jd}\mathbf1_{\mathcal R^s_{j\nu}}(z).
\end{equation}
 It follows from (\ref{cone-correlation1}) that
\begin{align*}
(|H_{j\nu}^s|*p_{n+r-j}F_jp_{n+r-j})(x)
&\le C_d\mathfrak M_0\,2^{-jd}\int_{x+\mathcal R^s_{j\nu}}(p_{n+r-j}F_jp_{n+r-j})(y)dy\\
&\le C_d\mathfrak M_0\,2^{-jd}\sum_{Q\in\Q_{n+r-j}\atop Q\cap\{x+\mathcal R^s_{j\nu}\}\neq\emptyset}\int_{Q}p_{n+r-j}f_{n+r-j}p_{n+r-j}(y)dy\\
&\le C_d\mathfrak M_0\,2^{-jd}\lambda\sum_{Q\in\Q_{n+r-j}\atop Q\cap\{x+\mathcal R^s_{j\nu}\}\neq\emptyset}|Q|\\
&\le  C_d\mathfrak M_0\,2^{-jd}\lambda|x+C_d\mathcal R^s_{j\nu}|\leq C_d\lambda\mathfrak M_02^{-s(d-1)},
\end{align*}
where the third inequality is due to $s\leq n+r$, and thus $2^{j-n-r}\leq2^{j-s}$.

On the other hand, since $\|H_{j\nu}^s\|_1\leq C_d\mathfrak M_02^{-s(d-1)}$, one can deduce from  Young's inequality that
\begin{align*}
\tau\bigl((|H_{j\nu}^s|*p_{n-j}F_jp_{n-j})(x)\bigr)
&\le \|H_{j\nu}^s\|_1\tau\bigl((p_{n-j}F_jp_{n-j})(x)\bigr)\\
&\le C_d\mathfrak M_02^{-s(d-1)}\tau\bigl((p_{n-j}fp_{n-j})(x)\bigr).
\end{align*}
Therefore, by these two estimates, we obtain
\begin{align*}
\sum_j\|Z_j\|_2^2
&= \sum_j\int_{\R^d}\|Z_j(x)\|_{L_2(\mathcal M)}^2dx\\
&\le  C_d\lambda \mathfrak M_0^22^{-2s(d-1)}\int_{\R^d}\sum_j\tau\bigl((p_{n-j}fp_{n-j})(x)\bigr)dx\\
&\leq C_d\lambda \mathfrak M_0^22^{-2s(d-1)}\|f\|_1,
\end{align*}
which implies that
\begin{equation}\label{cone-correlation113112}
\big|
\sum_{j}\sum_{i:|i-j|\leq \varrho}\langle Z_i,Z_j\rangle
\big|\leq C_d(1+\varrho)\lambda \mathfrak M_0^22^{-2s(d-1)}\|f\|_1.
\end{equation}

We now turn to estimating the second term in (\ref{eq:corner-principal11}).
For $i<j$ with $j-i>\varrho$, we have
\begin{align*}
\langle Z_i,Z_j\rangle
&=\iint
(\widetilde {H_{i\nu}^s}*H_{j\nu}^s)(y-z)
\,\tau(X_i(y)^*X_j(z))\,dy\,dz,
\end{align*}
where  $\widetilde {H_{i\nu}^s}(x)=\overline{H_{i\nu}^s(-x)}$. We may choose the symmetric rectangle $\mathcal R^s_{j\nu}$ large enough such that $\widetilde {H_{i\nu}^s}*H_{j\nu}^s$ is supported in $\mathcal R^s_{j\nu}$ for all $i\leq j$. Then
Young's inequality gives
\begin{align*}
\big|(\widetilde {H_{i\nu}^s}*H_{j\nu}^s)(y-z)\big|
&\leq \|\widetilde {H_{i\nu}^s}\|_1\|H_{j\nu}^s\|_\infty\textbf{1}_{\mathcal R^s_{j\nu}}(z-y)\\
&\le  C_d \mathfrak M_0^22^{-s(d-1)}2^{-jd}\textbf{1}_{\mathcal R^s_{j\nu}}(z-y).
\end{align*}

On the other hand,  by the Cauchy-Schwarz inequality and the basic inequality $ab\leq(a^2+b^2)/2$ for $a,b>0$, we see that
\begin{align*}
|\tau(X_i(y)^*X_j(z))|
&\le
\tau(p_{n+r-i}(y)F_i(y)p_{n+r-i}(y)p_{n+r-j}(z)F_j(z)p_{n+r-j}(z))^{\frac{1}{2}}
\\
&\ \ \times \tau(p_{n-i}(y)F_i(y)p_{n-i}(y)p_{n-j}(z)F_j(z)p_{n-j}(z))^{\frac{1}{2}}\\
&\le \frac{1}{2}\tau(p_{n+r-i}(y)F_i(y)p_{n+r-i}(y)p_{n+r-j}(z)F_j(z)p_{n+r-j}(z))\\
&\ \ +  \frac{1}{2}\tau(p_{n-i}(y)F_i(y)p_{n-i}(y)p_{n-j}(z)F_j(z)p_{n-j}(z))\\
&\ :=\frac{1}{2}[\tau(E_{i,j}(y,z))+\tau(G_{i,j}(y,z))].
\end{align*}

By the property of conditional expectation $\mathsf E_{n-i}(p_{n-i}F_ip_{n-i})=p_{n-i}f_{n-i}p_{n-i}$, one can deduce from (\ref{eq:pkqk}) that
\begin{align*}
\int_{z+\mathcal R^s_{j\nu}}p_{n-i}(y)F_i(y)p_{n-i}(y)dy
&\leq \sum_{Q\in\Q_{n-i}\atop Q\cap\{z+\mathcal R^s_{j\nu}\}\neq\emptyset}\int_{Q}(p_{n-i}f_{n-i}p_{n-i})(y)dy\\
&\le  C_d\lambda \sum_{Q\in\Q_{n-i}\atop Q\cap\{z+\mathcal R^s_{j\nu}\}\neq\emptyset}\int_{Q}p_{n-i}(y)dy.
\end{align*}
Since in this case $2^{i-n}<2^{j-n-\varrho}\leq2^{j-n}$ and $\sum_ip_i\leq\textbf{1}_{\n}$, one has
\begin{align*}
\sum_{\substack{i:i<j\\\ j-i>\varrho}}\int_{z+\mathcal R^s_{j\nu}}p_{n-i}(y)F_i(y)p_{n-i}(y)dy
&\leq  C_d\lambda \sum_{Q\in\Q_{n-i}\atop Q\cap\{z+\mathcal R^s_{j\nu}\}\neq\emptyset}\int_{Q}\sum_{\substack{i:i<j\\\ j-i>\varrho}}p_{n-i}(y)dy\\
&\le  C_d\lambda \sum_{Q\in\Q_{n-i}\atop Q\cap\{z+\mathcal R^s_{j\nu}\}\neq\emptyset}|Q|\\
&\le C_d\lambda2^{jd}2^{-s(d-1)}.
\end{align*}
Moreover, note also $2^{i-(n+s)}<2^{j-\varrho-(n+r)}\leq2^{j-n}$. The identical argument implies that
$$\sum_{\substack{i:i<j\\\ j-i>\varrho}}\int_{z+\mathcal R^s_{j\nu}}p_{n+r-i}(y)F_i(y)p_{n+r-i}(y)dy\le C_d\lambda2^{jd}2^{-s(d-1)}.$$
Therefore, by these two observations, we deduce by Fubini's theorem that
\begin{align*}
\Big|\sum_{\substack{i:i<j\\\ j-i>\varrho}}
\langle Z_i,Z_j\rangle\Big|
&\le  C_d \mathfrak M_0^22^{-s(d-1)}2^{-jd}\int_{\R^d}\sum_{\substack{i:i<j\\\ j-i>\varrho}}\int_{z+\mathcal R^s_{j\nu}}[\tau(E_{i,j}(y,z))+\tau(G_{i,j}(y,z))]dydz\\
&\le C_d \lambda\mathfrak M_0^22^{-2s(d-1)}\Big[\int_{\R^d}\tau(p_{n+r-j}(z)F_j(z)p_{n+r-j}(z))dz\\
&\ \ \ \ \  \ \ \ \ \ \ +\int_{\R^d}\tau(p_{n-j}(z)F_j(z)p_{n-j}(z))dz\Big]\\
&= C_d \lambda\mathfrak M_0^22^{-2s(d-1)}\Big[\int_{\R^d}\tau(p_{n+r-j}(z)f(z)p_{n+r-j}(z))dz\\
&\ \ \ \ \  \ \ \ \ \ \ +\int_{\R^d}\tau(p_{n-j}(z)f(z)p_{n-j}(z))dz\Big].
\end{align*}
Consequently, we find
\begin{equation}\label{cone-correlation113112w}
\Big|
2\operatorname{Re}
\sum_j\sum_{\substack{i:i<j\\\ j-i>\varrho}}
\langle Z_i,Z_j\rangle
\Big|\leq C_d\lambda \mathfrak M_0^22^{-2s(d-1)}\|f\|_1.
\end{equation}
This finishes the proof of (\ref{eq:corner-principal}) by (\ref{eq:corner-principal11}), (\ref{cone-correlation113112})  and (\ref{cone-correlation113112w}).
\end{proof}

\subsection{Proof of Lemma \ref{mc:prop:paired}}
The proof of Lemma \ref{mc:prop:paired} depends on the almost orthogonality principle, which should be well-known to the experts (see e.g. \cite[Lemma~3.7]{HongXu2021JFA}).
\begin{lem}\label{lem:almost-orthogonality}
	Let $\mathcal S_k$ be a bounded linear map on $L_2$ for each
	$k\in\mathbb Z$, and let $\eta\in L_2$.
	If $(u_n)_{n\in\mathbb Z}$ and $(v_n)_{n\in\mathbb Z}$
	are two sequences of functions in $L_2$ such that
	\[
	\eta=\sum_{n\in\mathbb Z}u_n,
	\qquad
	\sum_{n\in\mathbb Z}\|v_n\|_2^2<\infty,
	\]
	then
	\[
	\sum_k\|\mathcal S_k\eta\|_2^2
	\leq w^2\sum_{n\in\mathbb Z}\|v_n\|_2^2,
	\]
	provided that there is a sequence
	$(\sigma(j))_{j\in\mathbb Z}$ of positive numbers with
	\[
	w=\sum_{j\in\mathbb Z}\sigma(j)<\infty
	\]
	such that
	\[
	\|\mathcal S_ku_n\|_2\leq\sigma(n-k)\|v_n\|_2
	\]
	for every $n,k$.
\end{lem}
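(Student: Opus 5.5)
We prove Lemma \ref{lem:almost-orthogonality} by a duality argument combined with the Cauchy--Schwarz inequality and a Schur-type summation over the discrete convolution weight $\sigma$. We may assume the series $\sum_n\mathcal S_ku_n$ converges in $L_2$ to $\mathcal S_k\eta$. This holds by boundedness of $\mathcal S_k$ once $\eta=\sum_nu_n$ converges in $L_2$, which is the intended reading of the hypothesis. By the triangle inequality, for each fixed $k$,
\[
\|\mathcal S_k\eta\|_2\leq\sum_n\|\mathcal S_ku_n\|_2\leq\sum_n\sigma(n-k)\|v_n\|_2 .
\]
Thus the whole problem reduces to bounding the $\ell_2(\mathbb Z)$-norm in $k$ of the discrete convolution $k\mapsto\sum_n\sigma(n-k)a_n$, where $a_n=\|v_n\|_2$.

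For that step we use Cauchy--Schwarz with the weight split as $\sigma=\sigma^{1/2}\sigma^{1/2}$:
\[
\Big(\sum_n\sigma(n-k)a_n\Big)^2\leq\Big(\sum_n\sigma(n-k)\Big)\Big(\sum_n\sigma(n-k)a_n^2\Big)=w\sum_n\sigma(n-k)a_n^2 .
\]
Summing in $k$ and interchanging the order of summation (everything is nonnegative, so Tonelli applies) gives
\[
\sum_k\|\mathcal S_k\eta\|_2^2\leq w\sum_n a_n^2\sum_k\sigma(n-k)=w^2\sum_n\|v_n\|_2^2 ,
\]
which is the claim. Equivalently, this is Young's inequality $\ell_1*\ell_2\subset\ell_2$ with $\|\sigma\|_{\ell_1}=w$.

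There is no real obstacle here. The only point requiring a remark is the interpretation of $\eta=\sum_nu_n$ as an $L_2$-convergent series, so that $\mathcal S_k$ can be passed inside the sum. If only weaker (e.g.\ distributional) convergence were given, we would first apply the argument to partial sums $\eta_N=\sum_{|n|\le N}u_n$, then let $N\to\infty$ using lower semicontinuity of the norm together with the boundedness of each $\mathcal S_k$. In the noncommutative setting, all $L_2$-spaces are Hilbert spaces ($L_2(\mathcal N)$), so nothing beyond the Hilbert-space triangle inequality is used.
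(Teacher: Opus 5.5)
Your proof is correct, and it is the standard argument for this almost orthogonality principle (triangle inequality for fixed $k$, then Cauchy--Schwarz with $\sigma=\sigma^{1/2}\sigma^{1/2}$, i.e.\ Young's inequality $\ell_1*\ell_2\subset\ell_2$). The paper gives no proof of its own and simply quotes the lemma as well known from \cite[Lemma~3.7]{HongXu2021JFA}. One small correction to your closing remark: for a general bounded $\mathcal S_k$ on $L_2$, distributional convergence of the partial sums does not let you pass to the limit. Weak $L_2$ convergence would suffice, via weak lower semicontinuity of the norm. Under the intended $L_2$ reading of $\eta=\sum_n u_n$ this issue does not arise.
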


Now we are ready to prove Lemma \ref{mc:prop:paired}.
\begin{proof}[Proof of Lemma \ref{mc:prop:paired}.]
Using the same argument as in proving Lemma \ref{badfunction}, we have
	\begin{align*}
		\Big\|\sum_{j}\Gamma_j^s*g_{n-j,r}\Big\|_2^2
		 \le C_d2^{s(d-3/2)}\sum_\nu\Big\|\sum_{j}H_{j\nu}^{s}*g_{n-j,r}\Big\|_2^2.
	\end{align*}
By noting $\#\Theta_s\leq C_d2^{s(d-1)}$, it suffices to show
\begin{equation*}
 \Big\|\sum_{j}H^s_{j\nu}*g_{n-j,r}\Big\|_2^2
 \leq C_d\mathfrak M_0^2 2^{-2s(d-1)}\lambda\|f\|_1.
\end{equation*}

For $k\in\Z$, recall that $\mathcal L_k=\F^{-1}\beta_k$  was defined in the proof of Lemma \ref{mc:thm:complement}. Then by Plancherel's theorem,
$$\Big\|\sum_{j}H^s_{j\nu}*g_{n-j,r}\Big\|_2^2=\sum_k\Big\|\mathcal L_k*\sum_{j}H^s_{j\nu}*g_{n-j,r}\Big\|_2^2.$$
Applying Lemma \ref{lem:almost-orthogonality} to $\mathcal S_k=\mathcal L_k$, we are reduced to showing
	\begin{equation}\label{eq:good-almost-orthogonality}
		\begin{aligned}
			\|\mathcal L_k*(H_{j\nu}^s*g_{n-j,r})\|_2
			\leq
			C_d\mathfrak M_0\,2^{-s(d-1)}
			\sigma(n-j+r-k)\|g_{n-j,r}\|_2,
		\end{aligned}
	\end{equation}
	where
	\[
	\sigma(k)=
	\begin{cases}
		2^{-k},&k\geq0,\\
		2^{k/2},&k<0.
	\end{cases}
	\]
Note that
	\[
	\|\widehat {H^s_{j\nu}}\|_\infty
	\leq\|H^s_{j\nu}\|_1
	\leq C_d\mathfrak M_0\,2^{-s(d-1)}.
	\]
Then we use Plancherel's theorem to obtain
\begin{align*}
		\|\mathcal L_k*(H_{j\nu}^{s}*g_{n-j,r})\|_2
		&\leq \|\widehat {H^s_{j\nu}}\|_\infty\|\mathcal L_k*g_{n-j,r}\|_2\\
		&\leq C_d\mathfrak M_0\,2^{-s(d-1)}
		\|\mathcal L_k*g_{n-j,r}\|_2.
\end{align*}
Hence,  the proof of (\ref{eq:good-almost-orthogonality}) will follow if we can show
\begin{equation}\label{eq:good-frequency-decay}
\|\mathcal L_k*g_{n-j,r}\|_2\leq C_d\sigma(n-j+r-k)\|g_{n-j,r}\|_2.
\end{equation}

Suppose first that $k\leq n-j+r$. Recall that $\mathsf E_{n-j+r-1}g_{n-j,r}=0$. Then for every $u\in L_2(\n)$ with $\|u\|_2\leq1$, one has
\begin{align*}
|\langle \mathcal L_k*g_{n-j,r},u\rangle|
&=|\langle g_{n-j,r},\widetilde {\mathcal L_k}*u\rangle|\\
&=|\langle g_{n-j,r},
(I-\mathsf E_{n-j+r-1})(\widetilde {\mathcal L_k}*u)\rangle|\\
&\leq\|g_{n-j,r}\|_2
\|(I-\mathsf E_{n-j+r-1})(\widetilde {\mathcal L_k}*u)\|_2.
\end{align*}
Here $\widetilde {\mathcal L_k}(x)=\overline{\mathcal L_k(-x)}$.  By Jensen's inequality, the mean value theorem and the fact that $\|\nabla\mathcal L_k\|_1\leq C_d2^k$, we have
\begin{align*}
	\|(I-\mathsf E_{n-j+r-1})(\widetilde {\mathcal L_k}*u)\|^2_2
		&\leq \frac{1}{|Q|}\int_Q\int_Q\|(\widetilde {\mathcal L_k}*u)(x-y)-(\widetilde {\mathcal L_k}*u)(x)\|_{L_2(\M)}^2dxdy\\
		&\leq \frac{\ell(Q)^2}{|Q|}|Q|\|\nabla\mathcal L_k\|_1^2\|u\|_2^2\leq C_d2^{2k-2(n-j+r)}\|u\|_2^2.
\end{align*}
Taking the supremum over $\|u\|_2\leq1$, we obtain
\begin{equation}\label{eq:good-translation111}
\|\mathcal L_k*g_{n-j,r}\|_2
\leq C_d2^{k-(n-j+r)}\|g_{n-j,r}\|_2.
\end{equation}

Suppose next that $k>n-j+r$. 	The identity
	$\mathsf E_{n-j+r}g_{n-j,r}=g_{n-j,r}$
	implies that $g_{n-j,r}$ is a constant operator on dyadic cubes of
	side $2^{-(n-j+r)}$. Hence
	\begin{equation}\label{eq:good-translation}
		\|g_{n-j,r}(\cdot-y)-g_{n-j,r}\|_2
		\leq C_d
		\min\{1,(2^{n-j+r}|y|)^{1/2}\}\|g_{n-j,r}\|_2.
	\end{equation}
Indeed, this estimate is obvious by the triangle inequality whenever $|y|>2^{-(n-j+r)}$; on the other hand, if
$|y|\leq2^{-(n-j+r)}$, 	denote by $\mathcal D_y$ the set of points $x$ such that $x$ and
	$x-y$ belong to different cubes of $\mathcal Q_{n-j+r}$. 	For each $Q\in\mathcal Q_{n-j+r}$,
    $$	|\mathcal D_y\cap Q|\leq C_d2^{n-j+r}|y|\,|Q|.$$
This gives that
  	\begin{align*}
		\int_{\mathcal D_y}\|g_{n-j,r}(x)\|_{L_2(\mathcal M)}^2\,dx
	\leq C_d2^{n-j+r}|y|\,\|g_{n-j,r}\|_2^2.
	\end{align*}
The same bound also holds for $\|g_{n-j,r}(\cdot-y)\|_2$.  Hence, (\ref{eq:good-translation}) is proved.

	Since $\int\mathcal L_k=0$, we obtain
	\[
	\mathcal L_k*g_{n-j,r}(x)
	=
	\int \mathcal L_k(y)
	\bigl[g_{n-j,r}(x-y)-g_{n-j,r}(x)\bigr]\,dy.
	\]
	Using \eqref{eq:good-translation} and
	$\mathcal L_k(y)=2^{kd}\check\beta(2^ky)$, we find
	\begin{align*}
		\|\mathcal L_k*g_{n-j,r}\|_2
		&\leq C_d\|g_{n-j,r}\|_2
		\int |\mathcal L_k(y)|(2^{n-j+r}|y|)^{1/2}\,dy\\
		&=C_d2^{-(k-(n-j+r))/2}\|g_{n-j,r}\|_2
		\int|\check\beta(y)||y|^{1/2}\,dy\\
		&\leq C_d2^{-(k-(n-j+r))/2}\|g_{n-j,r}\|_2,
	\end{align*}
    where the last inequality holds since $\beta$ is a Schwartz function.
 This, combined with (\ref{eq:good-translation111}), proves \eqref{eq:good-frequency-decay}, and hence the proof is complete.
\end{proof}

\section{Proof of Theorem \ref{thm:main}}\label{sec:proof-main}
With the preparatory work in the previous sections, we are now able to conclude the proof of Theorem \ref{thm:main}.

 Using the standard density argument and splitting into the sum of four positive functions, it suffices to show the weak type $(1,1)$ estimate for elements from $\mathcal N_{c,+}$. Now fix $f\in \mathcal N_{c,+}$ and $\lambda\in(0,+\infty)$.
By Theorem \ref{thm:decomposition}, we can decompose $f$ as $f=g+b+h$.
Therefore, it suffices to show
\begin{align*}
	\varphi(\chi_{(\lambda,\infty)}(|T\phi|))\leq C_d\bigl(\|m\|_\infty^2+\Lambda_K\bigr)\frac{\| f\|_{1}}{\lambda},
\end{align*}
where $\phi\in\{g,b,h\}$. By Plancherel's theorem and
the H\"{o}lder inequality,
\begin{equation*}
	\varphi(\chi_{(\lambda,\infty)}(|Tg|))
	\leq\lambda^{-2}\|m\|_\infty^2\|g\|_2^2
	\leq C_d\lambda^{-1}\|m\|_\infty^2\|f\|_1.
\end{equation*}
For $\phi\in\{b,h\}$, we decompose $T\phi$ as
$$T\phi=\zeta^\perp T\phi \zeta^\perp + \zeta T\phi\zeta^\perp
+ \zeta^\perp T\phi\zeta + \zeta T\phi
\zeta.$$
By the distribution inequality and Theorem \ref{thm:decomposition} (i),
\begin{equation*}
	\begin{split}
		\varphi\big(\chi_{(\lambda,\infty)}(|T\phi|)\big)\leq&\ \varphi (\1_\mathcal{N}-\zeta) +
		\varphi \big(\chi_{(\lambda/4,\infty)}(|\zeta T\phi\zeta|)\big)\\
		\leq&\ C_d\lambda^{-1}\|f\|_{1}+\varphi \big(\chi_{(\lambda/4,\infty)}(|\zeta T\phi\zeta|)
		\big).
	\end{split}
\end{equation*}
Hence, it remains to deal with $\zeta T\phi\zeta$.
The estimate for the case $\phi=b$ follows essentially from Seeger's argument \cite{Seeger1996JAMS}, given in {Section} \ref{diag-bad}. The novel part lies in the case $\phi=h$, which will be carried out in {Section} \ref{offdia-part}.




	\subsection{Dyadic decomposition and kernel estimates}  Recall from the proof of Lemma \ref{mc:thm:complement} that $\beta\in C^\infty(\R^d\setminus\{0\})$ is radial, supported in \(\{1/2\leq|\xi|\leq2\}\) such that $\sum_k\beta^2(2^{-k}\xi)=1$ for all $\xi\neq0$. Put $\chi(r)=\beta(re_1)^2$ for $r>0$. Since $\beta$ is radial,
	this choice is independent of the unit vector $e_1$, and
	\begin{equation*}
		K(x)=\sum_{j\in\mathbb Z}K_j(x)\quad(x\neq0),\qquad
		K_j(r\theta)=\chi(2^{-j}r)K(r\theta).
	\end{equation*}

	Set $\mathscr B=\{8\cdot2^n:n\geq0\}$. For every integer $s\geq8$,
there is a unique $B(s)\in\mathscr B$ with
$B(s)\leq s<2B(s)$. Define
\begin{equation}\label{eq:block-parameters}
	D^s=\{\theta\in S^{d-1}:V(\theta)\leq2^{\delta B(s)}\|V\|_{1}\}.
\end{equation}
	Let $\omega\in C_c^\infty((0,2^{-10}))$ have integral
one. Define
\[
	U_{j}^{B(s)}(r\theta)=
 \int_0^\infty K(\rho\theta)2^{\delta B(s)-j}
 \omega\bigl(2^{\delta B(s)-j}(r-\rho)\bigr)\,d\rho,
\]
where $\delta=\frac1{100(d+2)}$. Then
\begin{align}\label{decomposition of kernel}
K_j=H_j^{B(s)}+R_{j}^{B(s)}+S^{B(s)}_{j},
\end{align}
where
$$H_j^{B(s)}=\textbf{1}_{D^{s}}U_{j}^{B(s)}, \quad
		R^{B(s)}_{j}=\textbf{1}_{S^{d-1}\setminus D^s}U^{B(s)}_{j},\quad
		S^{B(s)}_{j}=K_j-U^{B(s)}_{j}.$$

The above decomposition follows from Seeger's idea \cite{Seeger1996JAMS} but with a different choice of $B(s)$. Below,
the estimates \eqref{eq:regularized-seminorms}, \eqref{fd:eq:radial-error-per-block} and \eqref{eq:scalar-errors} are standard (see \cite[pp.~103--104]{Seeger1996JAMS}); but \eqref{RB} and \eqref{SB} are new variants, which follow from our choice of $B(s)$ and will play an important role in handling the off-diagonal parts in {Section} \ref{offdia-part}.
\begin{lem}\label{kernelestimate1}
For integers $s\geq8$ and  $N>0$, define
$$ \mathfrak M_N(B(s))=\sup_j\esssup_{\theta\in S^{d-1}}
 \sup_{\rho>0}\max_{0\leq a\leq N}
 \rho^{d+a}|\partial_\rho^aH_j^{B(s)}(\rho\theta)|.$$
Then
\begin{equation}\label{eq:regularized-seminorms}
 \mathfrak M_N(B(s))\leq C_{d,N}\|V\|_1\,2^{(N+2)\delta B(s)}.
\end{equation}
\end{lem}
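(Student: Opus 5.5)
Write $\epsilon=2^{\delta B(s)-j}$, so that
$U_j^{B(s)}(\rho\theta)=\int_0^\infty K(t\theta)\,\epsilon\,\omega(\epsilon(\rho-t))\,dt$. The plan is to move all radial derivatives onto the mollifier $\omega$ and then bound what remains using only the angular function $V$ and the set $D^s$.

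First, the support. The kernel $K_j(t\theta)=\chi(2^{-j}t)K(t\theta)$ is the one being mollified, so $t\in[2^{j-1},2^{j+1}]$. Since $\omega$ is supported in $(0,2^{-10})$, we have $\rho-t\in(0,2^{-10}\epsilon^{-1})$, and $\epsilon^{-1}=2^{j-\delta B(s)}\le 2^j$. Hence $\rho\sim 2^j$ on the support, so $\rho^{d+a}\le C_d2^{j(d+a)}$, and $H_j^{B(s)}$ is supported in $2^{j-2}\le|x|\le 2^{j+2}$, as Section~\ref{sec:microlocal} requires. If instead the definition literally mollifies $K$ rather than $K_j$, I would insert the cutoff $\chi(2^{-j}\cdot)$; the bounds below are unchanged.

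Second, differentiate under the integral:
\[
\partial_\rho^a U_j^{B(s)}(\rho\theta)=\int K_j(t\theta)\,\epsilon^{1+a}\,\omega^{(a)}(\epsilon(\rho-t))\,dt .
\]
Bounding $\omega^{(a)}$ by its sup norm gives
\[
|\partial_\rho^a U_j^{B(s)}(\rho\theta)|\le C_N\,\epsilon^{1+a}\int_{2^{j-1}}^{2^{j+1}}|K(t\theta)|\,dt\le C_N\,\epsilon^{1+a}\,2^{-j(d-1)}\bigl(V_{2^{j-1}}(\theta)+V_{2^{j}}(\theta)\bigr),
\]
using $t^{d-1}\sim 2^{j(d-1)}$ on the range of integration. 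Then
\[
\rho^{d+a}|\partial_\rho^a U_j^{B(s)}(\rho\theta)|\le C_{d,N}\,2^{j(d+a)}\,2^{(1+a)(\delta B(s)-j)}\,2^{-j(d-1)}\,V(\theta)=C_{d,N}\,2^{(1+a)\delta B(s)}\,V(\theta),
\]
because the powers of $2^j$ cancel exactly. This cancellation is what makes the estimate uniform in $j$.

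Third, the angular cutoff $\mathbf 1_{D^s}$. On $D^s$ we have $V(\theta)\le 2^{\delta B(s)}\|V\|_1$, so for $0\le a\le N$,
\[
\rho^{d+a}|\partial_\rho^a H_j^{B(s)}(\rho\theta)|\le C_{d,N}\,\|V\|_1\,2^{(2+a)\delta B(s)}\le C_{d,N}\,\|V\|_1\,2^{(N+2)\delta B(s)} .
\]
Taking the supremum over $j$, $\theta$, $\rho$ and $a$ gives \eqref{eq:regularized-seminorms}.

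There is no real obstacle here. The only points needing care are the exact cancellation of the powers of $2^j$ and the bookkeeping of the support. Measurability in $\theta$ is fine, since $V$ is a supremum that may be taken over rational $R$.
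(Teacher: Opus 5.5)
Your proof is correct and is exactly the standard computation the paper defers to Seeger for (pp.~103--104): you move the $a$ radial derivatives onto the mollifier, use $\rho\sim 2^j$ on the support so that $2^{j(d+a)}\cdot 2^{(1+a)(\delta B(s)-j)}\cdot 2^{-j(d-1)}=2^{(1+a)\delta B(s)}$, and then spend one more factor $2^{\delta B(s)}\|V\|_1$ on $D^s$. Your reading that $U_j^{B(s)}$ mollifies $K_j$ rather than $K$ is the intended one, and it is not merely cosmetic: it is what gives $H_j^{B(s)}$ the annular support $\{2^{j-2}\le|x|\le 2^{j+2}\}$ required in Section~\ref{sec:microlocal}, and what makes the bound uniform in $\rho>0$.
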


\begin{lem}\label{kernelestimate3}
	For an integer $s\geq8$, set $\gamma(B(s))=\sup_j\|R_{j}^{B(s)}\|_1$. Then
	\begin{equation}\label{fd:eq:radial-error-per-block}
		\gamma(B(s))\leq C_d\int_{\{V(\theta)>2^{\delta B(s)}\|V\|_1\}}V(\theta)\,d\sigma(\theta).
	\end{equation}
	As a consequence,
	\begin{align}\label{RB}
\sum_{s\geq8}\gamma(B(s))=\sum_{B\in\mathscr B}B\gamma(B)\leq C_d(1+L).\end{align}
\end{lem}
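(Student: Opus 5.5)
The estimate \eqref{fd:eq:radial-error-per-block} will come from a direct computation in polar coordinates, and \eqref{RB} will then follow by counting how many $s$ share a given value $B(s)$. Write $B=B(s)$ and $t=2^{\delta B-j}$. For fixed $\theta$, $U_j^{B}(\cdot\theta)$ is the convolution in the radial variable of $\rho\mapsto K(\rho\theta)$ with the bump $t\,\omega(t\,\cdot)$. This bump has integral one and is supported in $(0,2^{-10}t^{-1})$, an interval of length at most $2^{-10}2^{j}$.

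The support of $K_j$ lies in $2^{j-1}\le r\le 2^{j+1}$. I will use this to localize the $\rho$-integral in $U_j^B(r\theta)$ to a range comparable to $2^j$; presumably the cutoff $\chi(2^{-j}r)$ is meant to be included, as in Seeger's construction. The definition as written carries no cutoff, so I will assume $U_j^B$ is built from $K_j$ (in Seeger, $K_j$ is regularized). Then $U_j^B(r\theta)$ vanishes unless $r\sim 2^j$, and Minkowski's inequality in the radial variable gives
\[
\int_0^\infty |U_j^B(r\theta)|r^{d-1}dr\le C_d\int_{2^{j-2}}^{2^{j+2}}|K(\rho\theta)|\rho^{d-1}d\rho\le C_dV(\theta).
\]
The key observation is that $r\sim\rho$ on the support of the integrand, so the weight $r^{d-1}$ can be replaced by $\rho^{d-1}$. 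The second inequality holds because the interval $[2^{j-2},2^{j+2}]$ is covered by four dyadic intervals, each controlled by $V_R(\theta)\le V(\theta)$.

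Integrating this over $\theta\in S^{d-1}\setminus D^s$ gives $\|R_j^B\|_1\le C_d\int_{\{V>2^{\delta B}\|V\|_1\}}V\,d\sigma$ uniformly in $j$, which is \eqref{fd:eq:radial-error-per-block}.

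For \eqref{RB}, the set of $s\ge8$ with $B(s)=B$ is $[B,2B)$, which contains exactly $B$ integers. Hence $\sum_{s\ge8}\gamma(B(s))=\sum_{B\in\mathscr B}B\gamma(B)$. Next I interchange sum and integral:
\[
\sum_{B\in\mathscr B}B\gamma(B)\le C_d\int_{S^{d-1}}V(\theta)\sum_{B\in\mathscr B:\,2^{\delta B}<V(\theta)/\|V\|_1}B\,d\sigma(\theta).
\]
The $B$'s form the geometric sequence $8\cdot 2^n$, so the inner sum is at most twice its largest term. That term is bounded by $\delta^{-1}\log_2(V/\|V\|_1)$, and it is nonzero only when $V>\|V\|_1$.

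It remains to bound $\int V\log_+(V/\|V\|_1)\,d\sigma$ by $C_d(1+L)$. If $\|V\|_1\ge1$, this is at most $\int V\log_+V\,d\sigma\le L$. If $\|V\|_1<1$, I use the identity $\log(V/\|V\|_1)=\log V+\log(1/\|V\|_1)$. The term $\int V\log_+V$ is at most $L$. The remaining term is $\|V\|_1\log(1/\|V\|_1)$, which is at most $e^{-1}$ because $x\log(1/x)\le e^{-1}$ on $(0,1)$. The constant $\delta^{-1}$ depends only on $d$.

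The main obstacle is the support and normalization bookkeeping in the first step. Specifically, the radial convolution must not move mass far from scale $2^j$, so that $\rho^{d-1}$ and $r^{d-1}$ are comparable and only boundedly many dyadic pieces $V_R$ enter. If $U_j^B$ is literally defined from $K$ rather than $K_j$, I would instead redefine it with the cutoff $\chi(2^{-j}\rho)$ inserted, which is clearly Seeger's intent.
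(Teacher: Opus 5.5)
Your proof is correct and follows the paper's route for \eqref{RB}: Fubini, the geometric sum over $\mathscr B$ bounded by $2B_*<2\delta^{-1}\log_2(V/\|V\|_1)$, and the bound $\int V\log_+(V/\|V\|_1)\,d\sigma\le C_d(1+L)$. You also supply the case split on $\|V\|_1$ for that last step, which the paper leaves implicit. For \eqref{fd:eq:radial-error-per-block} the paper just cites Seeger, and your radial Minkowski computation is the standard argument; you are right that $U_j^{B(s)}$ must be built from $K_j$ rather than $K$, since otherwise $H_j^{B(s)}$ would lack the support required in Section~\ref{sec:microlocal} and $\|R_j^{B(s)}\|_1$ need not even be finite, so your reading is the correct repair of a typo.
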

\begin{proof}
It suffices to prove \eqref{RB}. The identity can be verified easily.
It then follows from Fubini's theorem and (\ref{fd:eq:radial-error-per-block}) that
\begin{align*}
	\sum_{B\in\mathscr B}B\gamma(B)
	\leq C_d\int_{S^{d-1}}V(\theta)
	\sum_{B\in\mathscr B}B\,
	\textbf{1}_{\{V(\theta)/\|V\|_1>2^{\delta B}\}}
	\,d\sigma(\theta).
\end{align*}
Set $a=V(\theta)/\|V\|_1$. If $a>2^{\delta B}$, then $B<\frac{\log a}{\delta\log2}$.  Let $B_*\in\mathscr B$ be the largest scale satisfying this condition.
Then
\[
\sum_{\substack{B\in\mathscr B\\a>2^{\delta B}}}B
=8+16+\cdots+B_*
=2B_*-8
<\frac{2\log a}{\delta\log2}.
\]
Consequently,
$$\sum_{B\in\mathscr B}B\gamma(B)\leq C_d\int_{S^{d-1}}V(\theta)
\log_+\!\left(\frac{V(\theta)}{\|V\|_1}\right)
	\,d\sigma(\theta)\leq C_d(1+L),$$
as desired.
\end{proof}

\begin{lem}\label{kernelestimate2}
For every integer $s\geq8$, define $e(B(s))=\sup_j\|S_{j}^{B(s)}\|_1$. Then
\begin{equation}\label{eq:scalar-errors}
e(B(s))\leq C_dW,\quad \forall s\geq8; \quad
 e(B(s))\leq C_d\eta(2^{\delta B(s)-3}),\quad \forall B(s)\geq 10\delta^{-1}.
\end{equation}
	As a consequence,
	\begin{align}\label{SB}
  \sum_{s\geq8}e(B(s))=\sum_{{B\in\mathscr B}}B\,e(B)\leq C_d(W+D).
  \end{align}
\end{lem}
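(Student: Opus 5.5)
The plan is to control $S^{B(s)}_j$ as a radial mollification error. I will first make both estimates in \eqref{eq:scalar-errors} uniform in $j$, and then sum over the dyadic blocks exactly as in the proof of \eqref{RB}.

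\textbf{Caveat on the definition.} As in Seeger \cite[p.~103]{Seeger1996JAMS}, I read the mollified kernel in $U^{B(s)}_j$ as built from $K_j$, not $K$. Otherwise $K_j-U^{B(s)}_j$ is not localized and $\sup_j\|S^{B(s)}_j\|_1$ could not be finite. Everything below rests on this reading; if the paper really intends $K$ itself, the argument does not apply as written.

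\textbf{Reduction to a supremum over shifts.} Put $\epsilon=2^{j-\delta B}$ with $B=B(s)$. Since $\int\omega=1$ and $\operatorname{supp}\omega\subset(0,2^{-10})$, we have
\[
S^{B}_j(r\theta)=\int\bigl[K_j(r\theta)-K_j((r-v)\theta)\bigr]\epsilon^{-1}\omega(v/\epsilon)\,dv .
\]
Minkowski's inequality in polar coordinates then gives
\[
\|S^B_j\|_1\le\sup_{0<v<2^{-10}\epsilon}\int_{S^{d-1}}\int_0^\infty\bigl|K_j(r\theta)-K_j((r-v)\theta)\bigr|r^{d-1}\,dr\,d\sigma(\theta).
\]
Both terms live on $r\sim2^j$, where $(r-v)^{d-1}\simeq r^{d-1}$ because $v\le2^{j-10}$. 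Bounding each term separately by $\int|K_j|\le C_d\int_{S^{d-1}}V\,d\sigma$ yields the first estimate $e(B)\le C_dW$, valid for all $s\geq8$.

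\textbf{The $\eta$ estimate.} Assume $B\ge10\delta^{-1}$ and write
\[
K_j(r\theta)-K_j((r-v)\theta)=\chi(2^{-j}r)\bigl[K(r\theta)-K((r-v)\theta)\bigr]+\bigl[\chi(2^{-j}r)-\chi(2^{-j}(r-v))\bigr]K((r-v)\theta).
\]
For the first term, $r\in[2^{j-1},2^{j+1}]$ is covered by two intervals $[R,2R]$ with $R\ge2^{j-1}\ge a v$, where $a=2^{\delta B+9}$. A shift $r\mapsto r+v$ puts this in the form used in the definition of $\eta$. Since $\eta$ is nonincreasing (the supremum is over $R\ge av$), this term is at most $C\eta(2^{\delta B-3})$.

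For the second term, $|\chi(2^{-j}r)-\chi(2^{-j}(r-v))|\le C2^{-j}v\le C2^{-\delta B}$, so it is bounded by $C2^{-\delta B}W$.

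\textbf{Main obstacle.} To get the stated bound with no $W$ term, I need the lower bound
\begin{equation*}
W\le C_d\,a\,\eta(a)\qquad(a\ge2),
\end{equation*}
whose right side is finite because $D<\infty$. Applied with $a=2^{\delta B-3}$, it gives $2^{-\delta B}W\le C_d\eta(2^{\delta B-3})$.

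To prove it, fix $v$ and a dyadic $R\ge av$ and set $k_\theta(r)=K(r\theta)$. The idea is that if $k_\theta$ were nearly shift-invariant over $[R,2R]$ at step $v$, it would be nearly constant over many dyadic scales. That would force $\int_{R'}^{2R'}|k_\theta|r^{d-1}\,dr$ to grow like $(R'/R)^{d}$, contradicting $\sup_{R'}V_{R'}<\infty$.

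Quantitatively, I telescope
\[
k_\theta(r)-k_\theta(r+mv)=\sum_{i=0}^{m-1}\bigl[k_\theta(r+iv)-k_\theta(r+(i+1)v)\bigr]
\]
with $mv\simeq R$. I then integrate against $r^{d-1}\,dr$ over $[R,2R]$ and integrate over $\theta$; each summand is controlled by $\eta(a)$, so the difference term is at most $m\,\eta(a)\simeq (R/v)\eta(a)$. The $r^{d-1}$-weighted mass at $r$ is compared with that at $r+mv\sim2r$. Because of the weight $r^{d-1}$, the mass at the larger radius dominates the mass at the smaller radius by a factor about $2^{d}$, and this discrepancy must be absorbed by the telescoping sum. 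Combining with the first-term bound then gives $\int_{S^{d-1}}V_R\,d\sigma\le C_d\,(R/v)\,\eta(a)$; the remaining point is to reduce the ratio $R/v$ to $a$ by taking $R$ comparable to $av$ and using scale invariance of the hypotheses.

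I expect this lower bound, and in particular absorbing the scale change through the weight $r^{d-1}$, to be the delicate step. If it fails, the fallback is the weaker bound $e(B)\le C_d\eta(2^{\delta B-3})+C_d2^{-\delta B}W$, which still suffices for \eqref{SB}.

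\textbf{Summation.} The identity $\sum_{s\ge8}e(B(s))=\sum_{B\in\mathscr B}Be(B)$ holds because exactly $B$ integers $s$ satisfy $B(s)=B$. The finitely many blocks with $B<10\delta^{-1}$ contribute at most $C_dW$ by the first estimate. For the rest,
\[
\sum_{B\ge10/\delta}B\,\eta(2^{\delta B-3})\le C_d\int_2^\infty\eta(a)\frac{da}{a},
\]
since $\eta$ is nonincreasing and $\sum_{B\in\mathscr B}B\,\mathbf 1_{\{2^{\delta B}\le ca\}}\le C\log a$, exactly as in the proof of \eqref{RB}. Hence $\sum_{B\in\mathscr B}B\,e(B)\le C_d(W+D)$, which is \eqref{SB}.
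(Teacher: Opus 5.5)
\textbf{Gap: the second estimate in \eqref{eq:scalar-errors}.} Your reading of $U^{B(s)}_j$ as a ray-mollification of $K_j$ is right. With $K$ as printed, $S^{B(s)}_j$ is not integrable, e.g.\ for homogeneous kernels. Your first bound $e(B)\le C_dW$ is sound. So is your treatment of $\chi(2^{-j}r)[K(r\theta)-K((r-v)\theta)]$ through the monotonicity of $\eta$, and so is the summation. In particular, your fallback $e(B)\le C_d\eta(2^{\delta B-3})+C_d2^{-\delta B}W$ already gives \eqref{SB}, because $\sum_{B\in\mathscr B}B2^{-\delta B}\le C_d$. That is all the paper proves here: it cites Seeger for \eqref{eq:scalar-errors} and gets \eqref{SB} by the same monotonicity argument on the disjoint intervals $[2^{\delta B/2-3},2^{\delta B-3}]$. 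What you do not establish is the second estimate in \eqref{eq:scalar-errors} as stated, with no $W$ term.

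\textbf{Why your route fails.} Your route to that estimate is the inequality $W\le C_d\,a\,\eta(a)$, and it is false. $W$ has the supremum over $R$ inside the angular integral, while $\eta$ has it outside. For a counterexample, fix $\phi\in C_c^\infty((1,2))$ with $0\le\phi\le1$ and $\phi=1$ on $[5/4,7/4]$. Split $S^{d-1}$ into $N$ sets $A_1,\dots,A_N$ of equal measure, and put $K(r\theta)=r^{-d}\phi(2^{-10k}r)$ for $\theta\in A_k$.
\begin{itemize}
\item Every $\theta$ has $V(\theta)\ge\log(7/5)$, so $W\ge c_d$.
\item Take $v>0$ and $R\ge av$, so $v\le R/2$. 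The integrand defining $\eta$ vanishes on $A_k$ unless $2^{10k-1}<R<2^{10k+2}$, which holds for at most one $k$.
\item For that $k$ it contributes $O(v/R)=O(1/a)$ per direction. Hence $a\eta(a)\le C/N$, and $W\le C_d\,a\,\eta(a)$ fails as $N\to\infty$.
\end{itemize}

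\textbf{How to repair it.} Your telescoping idea does prove the local bound $\int_{S^{d-1}}V_R\,d\sigma\le C_d\,a\,\eta(a)$ for each fixed $R>0$. That is enough, provided you never replace the cutoff term by $W$. That term is at most $C2^{-\delta B}\sum_{i=-2}^{0}\int_{S^{d-1}}V_{2^{j+i}}(\theta)\,d\sigma(\theta)$.

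To prove the local bound, write $k(r)=K(r\theta)$ and $M_\theta(R)=V_R(\theta)$. Use $|k(r)|\le|k(r+R)|+|k(r)-k(r+R)|$ and $r^{d-1}\le(2/3)^{d-1}(r+R)^{d-1}$ for $r\in[R,2R]$. This gives $M_\theta(R)\le(2/3)^{d-1}M_\theta(2R)+\int_R^{2R}|k(r)-k(r+R)|r^{d-1}\,dr$. Telescope the last integral in $m=\lceil a\rceil$ steps of length $v=R/m\le R/a$. Each step is an integral over $[R,3R]\subset[R,2R]\cup[2R,4R]$, so its angular integral is at most $2\eta(a)$, and the whole term is at most $2m\eta(a)\le3a\eta(a)$. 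Now iterate outward, using $M_\theta(2^kR)\le V(\theta)<\infty$ a.e.; the factors $(2/3)^{k(d-1)}$ sum, giving $\int_{S^{d-1}}V_R\,d\sigma\le C_d\,a\,\eta(a)$. With $a=2^{\delta B-3}$, which is at least $2$ when $B\ge10\delta^{-1}$, this yields $e(B)\le C_d\eta(2^{\delta B-3})$.

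\textbf{A simpler alternative.} Put the cutoff outside the mollifier: let $U_j$ be $\chi(2^{-j}r)$ times the ray-mollification of $K$. Then $S_j$ has no cutoff term, and the bound by $\eta(2^{\delta B+9})\le\eta(2^{\delta B-3})$ follows directly from the definition of $\eta$.
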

\begin{proof}

We just prove \eqref{SB}.
On the one hand,
\[
\sum_{\substack{B\in\mathscr B\\B\leq10/\delta}}B\,e(B)
\leq C_dW
\sum_{\substack{B\in\mathscr B\\B\leq10/\delta}}B
\leq C_dW.
\]

For the remaining part, the second estimate in
\eqref{eq:scalar-errors} gives
\[
e(B)\leq C_d\eta(2^{\delta B-3}),
\qquad B>10/\delta.
\]
Since $\eta$ is nonincreasing,
\[
\int_{2^{\delta B/2-3}}^{2^{\delta B-3}}
\eta(a)\,\frac{da}{a}
\geq
\eta(2^{\delta B-3})\,\frac{\delta B\log2}{2}.
\]
Consequently,
\[
B\,e(B)
\leq \frac{2C_d}{\delta\log2}
\int_{2^{\delta B/2-3}}^{2^{\delta B-3}}
\eta(a)\,\frac{da}{a}.
\]
These intervals lie in $[2,\infty)$ and have disjoint interiors because $B>10/\delta$.
Therefore,
\begin{align*}
\sum_{\substack{B\in\mathscr B\\B>10/\delta}}B\,e(B)
&\leq \frac{2C_d}{\delta\log2}
\sum_{\substack{B\in\mathscr B\\B>10/\delta}}
\int_{2^{\delta B/2-3}}^{2^{\delta B-3}}
\eta(a)\,\frac{da}{a}\\
&\leq C_d\delta^{-1}
\int_2^\infty\eta(a)\,\frac{da}{a}
\leq C_dD.
\end{align*}
This completes the proof.
\end{proof}

{
\begin{remark}
In \cite{Seeger1996JAMS}, Seeger's original definition of $D^s$ in \eqref{eq:block-parameters} was given by taking $B(s)= s$, and the resulting estimates seem not useful since the off-diagonal parts do not satisfy estimates similar to those for the diagonal bad part. Our choice of $B(s)$ and the resulting estimates \eqref{RB}, \eqref{SB} are used to obtain the new estimate of the off-diagonal part \eqref{off-part} in Section \ref{offdia-part}.
\end{remark}
}

\subsection{Estimate for diagonal bad function}\label{diag-bad}
As previously noted, the proof of this part is essentially the same as Seeger's argument \cite{Seeger1996JAMS}.
Recall that $b=\sum_kb_k$, where $b_k=p_k(f-f_k)p_k$. For $x\in\R^d$, we may write
\begin{align*}
	\zeta(x)Tb(x)\zeta(x)
	& =\zeta(x)\sum_{j\in\Z}\int_{\R^d}K_j(x-y)b(y)dy\zeta(x)\\
	& =\zeta(x)\sum_{n}\sum_{j\in\Z}\int_{\R^d}K_j(x-y)b_{n-j}(y)dy\zeta(x).
\end{align*}

For $Q\in\mathcal Q_{n-j}$ and $y\in Q$, $K_j(x-y)$ vanishes
unless $|x-y|\leq2^{j+1}$. If $n<8$, then $\zeta(x)p_Q=0$ whenever $x\notin
513Q$ since $|x-y|>2^8\,2^{j-n}\geq2^{j+1}$.
Thus
$$\zeta(x)Tb(x)\zeta(x)=\zeta(x)\sum_{n\geq8}\sum_{j\in\Z}\int_{\R^d}K_j(x-y)b_{n-j}(y)dy\zeta(x).$$
Now fix $n\geq8$ and $j\in\Z$. Decompose the kernel $K_j$ as
\begin{equation*}
K_j=\Gamma_j^{B(n)}+\big(H^{B(n)}_{j}-\Gamma_j^{B(n)}\big)+R^{B(n)}_{j}+S^{B(n)}_{j}.
\end{equation*}

By Lemma \ref{badfunction} with $s=B(n)$ and (\ref{eq:regularized-seminorms}) with $N=0$,
	\begin{equation*}
	\begin{split}
		\Big\|\sum_{n\geq8}\sum_{j}
	\Gamma_j^{B(n)}*b_{n-j}\Big\|_2
		&\leq  C_d\sqrt{\lambda\|f\|_1}\sum_{n\geq8}2^{-\alpha B(n)}\\
		&\leq  C_d\sqrt{\lambda\|f\|_1}\sum_{n\geq8}2^{-\alpha n/2}\leq  C_d\sqrt{\lambda\|f\|_1},
	\end{split}
\end{equation*}
where $\alpha=1/4-2\delta$.

Since $\int_Qb_{n-j}(x)dx=0$ for $Q\in\Q_{n-j}$, it follows from Lemma \ref{mc:thm:complement}, Lemma \ref{kernelestimate1} with $N=8(d+1)$, the fact that $B(n)\geq n/2$ and Theorem \ref{thm:decomposition} (iii) that
	\begin{equation*}
	\begin{split}
		\Big\|\sum_{n\geq8}\sum_{j}\big(H^{B(n)}_{j}-\Gamma_j^{B(n)}\big)*b_{n-j}\Big\|_1
		&\leq C_d\sum_{n\geq8}(2^{-\alpha n/2}+2^{-\epsilon n/2})
		\sum_{j}\|b_{n-j}\|_1\\
		&\leq C_d\|f\|_1,
	\end{split}
\end{equation*}
where $\epsilon=N/4-d-(N+2)\delta>0$.

On the other hand, by Lemma \ref{kernelestimate3} and Theorem \ref{thm:decomposition} (iii),
	\begin{equation*}
	\begin{split}
		\Big\|\sum_{n\geq8}\sum_{j}R^{B(n)}_{j}*b_{n-j}\Big\|_1
		&\leq \sum_{n\geq8}\sum_{j}\|R^{B(n)}_{j}\|_1\|b_{n-j}\|_1\\
		&\leq \|f\|_1\sum_{n\geq8}\gamma(B(n))\leq C_d(1+L)\|f\|_1.
	\end{split}
\end{equation*}

The same argument for $S^{B(n)}_{j}$, using Lemma \ref{kernelestimate2}, gives
\begin{equation*}
	\begin{split}
		\Big\|\sum_{n\geq8}\sum_{j}S^{B(n)}_{j}*b_{n-j}\Big\|_1
		&\leq2\|f\|_1\sum_{n\geq8}e(n)
		\leq C_d(D+W)\|f\|_1.
	\end{split}
\end{equation*}

Finally, combining the preceding kernel decomposition with the four bounds above, we deduce that
	\begin{equation*}
	\begin{split}
		\varphi\big(\chi_{(\lambda,\infty)}(|\zeta Tb\zeta|)\big)
		&\leq \lambda^{-2}\Big\|\sum_{n\geq8}\sum_{j}
	\Gamma_j^{B(n)}*b_{n-j}\Big\|^2_2\\
		&\ \ \ \ +\lambda^{-1}\Big\|\sum_{n\geq8}\sum_{j}\big(R^{B(n)}_{j}+S^{B(n)}_{j}\big)*b_{n-j}\Big\|_1\\
        &\leq C_d(1+D+W+L)\|f\|_1.
	\end{split}
\end{equation*}

\subsection{Estimate for off-diagonal part}\label{offdia-part}
Recall that
\begin{align*}
h_{k,r}=g_{k,r}+b_{k,r}.
\end{align*}
The same argument as for the diagonal part gives
\begin{equation}\label{eq:off-far-reduction}
	\zeta Th\zeta
	=\zeta\Big(\sum_{n\geq8}\sum_{r\geq1}\sum_{j}
	K_j*h_{n-j,r}\Big)\zeta.
\end{equation}
In this part, we decompose
\begin{align*}
K_j=\Gamma_j^{B(n+r)}+\big(H^{B(n+r)}_{j}-\Gamma_j^{B(n+r)}\big)+R^{B(n+r)}_{j}+S^{B(n+r)}_{j}.
\end{align*}

\subsubsection{Estimates related to the kernels $\Gamma_j$ and $H_j-\Gamma_j$}
In this case, we handle $b_{\mathrm{off}}$ and $g_{\mathrm{off}}$ separately.
Applying Lemma \ref{badfunction}, Lemma \ref{kernelestimate1} and Theorem \ref{thm:decomposition} (vi), we obtain
	\begin{equation*}
	\begin{split}
		\Big\|\sum_{n\geq8}\sum_{r\geq1}\sum_{j}
		\Gamma_j^{B(n+r)}*b_{n-j,r}\Big\|_2
		&\leq C_d\sqrt{\lambda\|f\|_1}
		\sum_{n\geq8}\sum_{r\geq1}\sqrt{r+1}\,2^{-\alpha B(n+r)}\\
		&\leq C_d\sqrt{\lambda\|f\|_1}.
	\end{split}
\end{equation*}

On the other hand, it follows from Lemma \ref{mc:thm:complement} with $s=n+r$ and Lemma \ref{kernelestimate1}  that
	\begin{equation*}
	\begin{split}
		&\Big\|\sum_{n\geq8}\sum_{r\geq1}\sum_{j}
		\big(H^{B(n+r)}_{j}-\Gamma_j^{B(n+r)}\big)*b_{n-j,r}\Big\|_1\\
		&\quad\leq C_d\sum_{n\geq8}\sum_{r\geq1}
		(2^{-\alpha B(n+r)}+2^{-\epsilon B(n+r)})
		\sum_j\|b_{n-j,r}\|_1
		\leq C_d\|f\|_1.
	\end{split}
\end{equation*}

We now turn to estimating $g_{\mathrm{off}}$.  By Lemma~\ref{mc:prop:paired}, Lemma \ref{kernelestimate1} and Theorem \ref{thm:decomposition} (v),
	\begin{equation*}
	\begin{split}
		\Big\|\sum_{n\geq8}\sum_{r\geq1}\sum_{j}
		\Gamma_j^{B(n+r)}*g_{n-j,r}\Big\|_2
		&\leq C_d\sqrt{\lambda\|f\|_1}
		\sum_{n\geq8}\sum_{r\geq1}2^{-\alpha B(n+r)}\\
		&\leq C_d\sqrt{\lambda\|f\|_1}.
	\end{split}
\end{equation*}

On the other hand, recall that
$\mathsf{E}_{n-j+r-1}g_{n-j,r}=0$, which satisfies the conditions in Lemma~\ref{mc:thm:complement}.  Hence, we use Lemma~\ref{mc:thm:complement},  Lemma \ref{kernelestimate1} and Lemma \ref{HWX19} (i) to deduce that
\begin{equation*}
	\begin{split}
		&\Big\|\sum_{n\geq8}\sum_{r\geq1}\sum_{j}
		\big(H^{B(n+r)}_{j}-\Gamma_j^{B(n+r)}\big)*g_{n-j,r}\Big\|_1\\
		&\quad\leq C_d\sum_{n\geq8}\sum_{r\geq1}
		(2^{-\alpha B(n+r)}+2^{-\epsilon B(n+r)})
		\sum_j\|g_{n-j,r}\|_1
		\leq C_d\|f\|_1.
	\end{split}
\end{equation*}

\subsubsection{Estimates related to the kernels $R_j$ and $S_j$}
In this case, we explore the summation $h_{k,r}=b_{k,r}+g_{k,r}$.
Recall that for $B\in\mathscr B$,
$$h_j^B=\sum_{r\geq1}
 \sum_{\substack{n\geq8\\B\leq n+r<2B}}h_{n-j,r}.$$
For each fixed $r\geq1$, write
$$
\sum_{n\geq8}=\sum_{B\in\mathscr B} \sum_{\substack{n\geq8\\ B\leq n+r<2B}}.$$ Then
one can deduce from the definition of $h_j^B$ that
\begin{equation*}
 \begin{aligned}
 & \sum_{n\geq8}\sum_{r\geq1}\sum_{j}
 R_j^{B(n+r)}*h_{n-j,r}
=\sum_{B\in\mathscr B}\sum_{j}R_j^{B}*h_j^B,
\\
 &\sum_{n\geq8}\sum_{r\geq1}\sum_{j}
 S_j^{B(n+r)}*h_{n-j,r}
 =\sum_{B\in\mathscr B}\sum_{j}S_j^{B}*h_j^B.
 \end{aligned}
\end{equation*}
Therefore, by Young's inequality, Lemma~\ref{HWX19} (ii) and Lemma \ref{kernelestimate3}, we obtain

\begin{equation*}
 \Big\|\sum_{B\in\mathscr B}\sum_{j}R_j^{B}*h_j^B\Big\|_1
\leq12\|f\|_1\sum_{B\in\mathscr B}B\gamma(B)\leq C_d(1+L)\|f\|_1;
\end{equation*}
and similarly, Lemma \ref{kernelestimate2} gives
\begin{equation*}
 \Big\|\sum_{B\in\mathscr B}\sum_{j}S_j^{B}* h_j^B\Big\|_1
 \leq12\|f\|_1\sum_{B\in\mathscr B}Be(B)\leq C_d(W+D)\|f\|_1.
\end{equation*}

\begin{remark}\label{rk:sum of g and b}
In the literature
\cite{Parcet2009,Cadilhac2018,HongXu2021JFA},
proofs using Parcet's decomposition proceed by estimating
$Tg_{\mathrm{off}}$ and $Tb_{\mathrm{off}}$ separately.
In the present setting, however, the direct separate estimates
are insufficient under our assumptions, and we instead exploit
the  summation
$g_{\mathrm{off}}+b_{\mathrm{off}}$. Indeed, using
\[
\sum_j\|b_{n-j,r}\|_1+\sum_j\|g_{n-j,r}\|_1
\leq C_d\|f\|_1,
\]
Young's inequality gives
\begin{align*}
\Big\|\sum_{n\geq8}\sum_{r\geq1}\sum_j
R_j^{B(n+r)}*b_{n-j,r}\Big\|_1
&\leq C_d\|f\|_1
\sum_{B\in\mathscr B}\gamma(B)
\sum_{\substack{n\geq8,\ r\geq1\\B\leq n+r<2B}}1\\
&\leq C_d\|f\|_1
\sum_{B\in\mathscr B}B^2\gamma(B).
\end{align*}
This estimate leads to a stronger
condition. In fact,
\begin{align*}
\sum_{B\in\mathscr B}B^2\gamma(B)
&\leq C_d\int_{S^{d-1}}V(\theta)
\sum_{B\in\mathscr B}B^2
\mathbf1_{\{V(\theta)/\|V\|_1>2^{\delta B}\}}
\,d\sigma(\theta)\\
&\leq C_d\int_{S^{d-1}}V(\theta)
\left(\log_+\frac{V(\theta)}{\|V\|_1}\right)^2\,d\sigma(\theta).
\end{align*}

Similarly, by using the monotonicity of $\eta$,
\[
\sum_{B\in\mathscr B}B^2e(B)
\leq C_d\left[
W+\int_2^\infty\eta(a)\log a\,\frac{da}{a}
\right].
\]
Thus these direct separate estimates require an additional
logarithmic factor in both the angular integrability and the radial
Dini condition.
%
\end{remark}

Now \eqref{eq:off-far-reduction} and the estimates proved in this subsection give
\begin{equation}\label{off-part}
 \varphi\big(\chi_{(\lambda,\infty)}(|\zeta Th\zeta|)\big)
\leq C_d\lambda^{-1}(L+W+D)\|f\|_1,
\end{equation}
which finishes the proof for the off-diagonal part.


\appendix
\section{Bochner--Riesz means}\label{app:bochner-riesz}

In this appendix, we adapt the proof of Theorem~\ref{thm:main}
to establish the weak type $(1,1)$ estimate for noncommutative
Bochner-Riesz means at the critical index.
The explicit oscillation of the Bochner--Riesz kernel yields
the desired estimates in Lemma~\ref{kernelestimate223},
which simplify the $TT^*$ argument in
Lemma~\ref{badfunction}. The good off-diagonal terms are handled
by martingale cancellation, as in
Lemma~\ref{mc:prop:paired}. Consequently, the proof and
calculations are simpler than those of
Theorem~\ref{thm:main}, since the microlocal decomposition
used there is no longer needed.

Consider the operator
\begin{equation}\label{Bochner-Riesz}
	\widehat{\mathcal B f}(\xi)
	=(1-|\xi|^2)_+^\mu\widehat f(\xi),
	\qquad \mu=\frac{d-1}{2}.
\end{equation}

\begin{thm}\label{thm:critical-BR}
	Let $\mathcal B$ be defined by \eqref{Bochner-Riesz}.
	Then for every $f\in L_1(\mathcal N)$,
	\[
	\sup_{\lambda>0}\lambda\,
	\varphi\bigl(\chi_{(\lambda,\infty)}
	(|\mathcal B f|)\bigr)
	\leq C_d\|f\|_1.
	\]
\end{thm}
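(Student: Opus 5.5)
The plan is to run the argument of Theorem~\ref{thm:main} with the Bochner--Riesz kernel in place of $K$. The microlocal splitting $\Gamma/(H-\Gamma)$ will be replaced by the explicit asymptotics of the kernel.

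\textbf{Step 1 (decomposition).} Write $\mathcal B=\mathcal B_0+\sum_{j\geq1}\mathcal B_j$, where $\mathcal B_0$ has an integrable, smooth, rapidly decaying kernel. Here $\mathcal B_j$ has kernel $K_j=\chi(2^{-j}|x|)K(x)$, with $K(x)=c\,|x|^{-d/2-\mu}J_{d/2+\mu}(2\pi|x|)$. Since $\mu=(d-1)/2$, the standard asymptotics give
\[
K_j(x)=2^{-jd}\sum_{\pm}e^{\pm 2\pi i|x|}a_j^\pm(x)+E_j(x),
\]
with $a_j^\pm$ smooth symbols supported in $|x|\sim 2^j$ and $\|E_j\|_1\leq C2^{-j}$. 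The operator $\mathcal B_0$ is weak $(1,1)$ trivially, and the $E_j$ parts sum in $L_1$.

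\textbf{Step 2 (kernel estimates).} The claimed Lemma~\ref{kernelestimate223} should record the following three facts for $T_j:=K_j*$:
\begin{enumerate}
\item[(a)] $\|K_j\|_1\leq C$ uniformly in $j$.
\item[(b)] For $F_Q$ with mean zero supported in a cube of side $\leq 2^{j-n}$,
\[
\|K_j*F_Q\|_1\leq C2^{-\epsilon n}\|F_Q\|_1
\]
when $n$ is large, outside a frequency piece.
\item[(c)] $\widehat{K_j}$ is concentrated in a $2^{-j}$-neighbourhood of the unit sphere, with $|\widehat{K_j}|\leq C$ and $\|\widehat{K_j}\|_\infty\lesssim 1$.
\end{enumerate}
The cleaner route is to split $K_j=K_j*\phi_{j,n}+K_j*(\delta-\phi_{j,n})$, where $\phi_{j,n}$ localizes frequencies to $|\xi|\leq 2^{\epsilon n}$. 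The rough piece then gains $2^{-\epsilon' n}$ in $L_1$ on cancellative $F_Q$, by integration by parts in the radial variable. The smooth piece is treated by $TT^*$. For that, $|(\widetilde K_i*K_j)(x)|\lesssim 2^{-jd}2^{-i(d-1)/2}\cdots$ yields the analogue of the pointwise bound \eqref{cone-correlation1}, with $2^{-jd}$ and the full ball in place of the rectangle.

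\textbf{Step 3 (reassembling the pieces).} With $f=g+b+g_{\mathrm{off}}+b_{\mathrm{off}}$ from Theorem~\ref{thm:decomposition}, the terms are handled as follows.
\begin{enumerate}
\item[(i)] $g$ is handled by $L_2$ boundedness.
\item[(ii)] $\zeta^\perp$ terms are handled by Theorem~\ref{thm:decomposition}(i).
\item[(iii)] For $\zeta\mathcal B b\zeta$, reindex as $\sum_{n\geq8}\sum_j K_j*b_{n-j}$. The smooth piece is put in $L_2$ via the argument of Lemma~\ref{badfunction}, with the rectangle replaced by the ball, using $\lambda p_kf_kp_k$-type bounds \eqref{eq:pkqk}. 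The rough piece is put in $L_1$ via (b), using $\sum_k\|b_k\|_1\leq 2\|f\|_1$.
\item[(iv)] For $b_{\mathrm{off}}$, argue identically, using Theorem~\ref{thm:decomposition}(vi) with summable factors $2^{-\epsilon(n+r)}\sqrt{1+r}$.
\item[(v)] For $g_{\mathrm{off}}$, the $L_2$ part follows Lemma~\ref{mc:prop:paired}: use Lemma~\ref{lem:almost-orthogonality} with the martingale cancellation $\mathsf E_{n-j+r-1}g_{n-j,r}=0$ and Theorem~\ref{thm:decomposition}(v). The $L_1$ part uses Lemma~\ref{HWX19}(i).
\end{enumerate}
No $R,S$ terms arise, because the kernel is smooth in the angular variable. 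Hence no summation of $g_{\mathrm{off}}+b_{\mathrm{off}}$ is needed, though Lemma~\ref{HWX19}(ii) is available if needed.

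\textbf{Main obstacle.} The main obstacle is obtaining geometric decay $2^{-\epsilon n}$ in the $TT^*$ estimate for the off-diagonal bad blocks $p_{n-j}Fp_{n+r-j}$. The difficulty is that the two projections live at different scales. I would try first to mimic the split $|i-j|\leq\varrho$ versus $|i-j|>\varrho$ from Lemma~\ref{badfunction}. The oscillatory gain comes from the bound $\|\widetilde K_i*K_j\|_\infty\lesssim 2^{-jd}2^{-\min(i,j)(d-1)/2}$ after frequency cutoff, combined with the noncommutative H\"older inequality as in that lemma.
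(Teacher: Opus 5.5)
There is a genuine gap. Nothing in your proposal produces decay in $n$ and $r$, and that decay is what lets the bad and off-diagonal pieces be summed. You flag this as the main obstacle, but neither device you suggest resolves it.

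The frequency cutoff at $|\xi|\le 2^{\epsilon n}$ is vacuous here. By your own (c), $\widehat{K_j}$ lives within $2^{-j}$ of the unit sphere, so $K_j*\phi_{j,n}$ is essentially all of $K_j$ and the ``rough piece'' is negligible. For the whole kernel, (b) cannot hold once $j\geq n$. The honest cancellation bound is $\|K_j*F_Q\|_1\lesssim 2^{j-n}\|\nabla K_j\|_1\|F_Q\|_1$ with $\|\nabla K_j\|_1\sim 1$, and the cube is then longer than the wavelength.

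So everything lands in your $TT^*$ piece. There the bound $\|\widetilde K_i*K_j\|_\infty\lesssim 2^{-jd}2^{-\min(i,j)\mu}$ is false on the diagonal, since $\widetilde K_j*K_j(0)=\|K_j\|_2^2\approx 2^{-jd}$. Write $V_j$ for the main term of $K_j$. The correct bounds (Lemma~\ref{kernelestimate223}) are $|\widetilde V_j*V_j(x)|\lesssim 2^{-jd}(1+|x|)^{-\mu}$, and $|\widetilde V_i*V_j(x)|\lesssim 2^{-jd}2^{-\mu i}$ for $i\leq j-4$. Feeding these into the corner argument gives diagonal terms of size $\lambda(2^{-dn}+2^{-\mu j})\,\varphi(p_{n-j}fp_{n-j})$. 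The oscillatory gain is in $j$, not in $n$. For bounded $j$ nothing decays in $n$, so the triangle inequality over $n\geq 8$, $r\geq 0$ diverges.

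Likewise, (c) records only $\|\widehat{K_j}\|_\infty\lesssim 1$. The large-$j$ part of $g_{\mathrm{off}}$ needs the decay $\|\widehat{V_j}\|_\infty\leq\|\widetilde V_j*V_j\|_1^{1/2}\lesssim 2^{-\mu j/2}$.

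The missing idea is to split the scale sum according to $j$ versus $n+r$, not in frequency.
\begin{itemize}
\item For $2j<n+r$, $b_{n-j,r}$ has mean zero on cubes of $\mathcal Q_{n-j+r}$, of side $2^{j-n-r}$, far below the wavelength. Hence $\|V_j*b_{n-j,r}\|_1\lesssim 2^{j-n-r}\|b_{n-j,r}\|_1$, which sums over this range to $O(2^{-(n+r)/2}\|f\|_1)$.
\item For $2j\geq n+r$, run the corner $TT^*$ argument with the two kernel bounds above, with all scales $i,j$ restricted to $\geq (n+r)/2$. This gives $\|\sum_{2j\geq n+r}V_j*X_j\|_2^2\lesssim\lambda\|f\|_1\,2^{-\mu(n+r)/4}$ (Lemma~\ref{corner}), whose square root is summable in $n,r$.
\item The off-diagonal good part is handled in $L_2$ via $\|V_j*g_{n-j,r}\|_2\lesssim\min\{2^{-\mu j/2},2^{j-n-r}\}\|g_{n-j,r}\|_2$. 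This combines martingale cancellation with Fourier decay, followed by Cauchy--Schwarz in $j$ and Theorem~\ref{thm:decomposition}(v).
\end{itemize}
Your alternative $L_1$ bound via Lemma~\ref{HWX19}(i) would also work for $2j<n+r$, but only once this split is made. No frequency cutoff, microlocal decomposition, or almost-orthogonality lemma is needed.
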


	Denote by $V_\mu$ the inverse Fourier transform of
$(1-|\xi|^2)_+^\mu$. 
	Choose a finite smooth partition of unity
$\{\varsigma_\nu\}_\nu$ on $S^{d-1}$, with each
$\varsigma_\nu$ supported in a sufficiently small spherical cap. Recall that $\chi$ is a smooth function supported in $[1/2,2]$ such that $\sum_j\chi(2^{-j}r)=1$ for $r>0$.
Following the reduction in \cite[Section~3]{Christ1988}, we obtain
\begin{equation}\label{Bochner-Riesz1}
	V_\mu=\sum_\nu\sum_{j>0}V_{j,\nu}+W,
\end{equation}
where
\begin{equation*}
	V_{j,\nu}(x)
	=c_d\varsigma_\nu(x/|x|)
	\cos\left(2\pi|x|-\frac{\pi d}{2}\right)
	\chi(2^{-j}|x|)|x|^{-d}.
\end{equation*}
Here $c_d$ is a fixed normalization constant, and the remainder $W$ satisfies
\[
|W(x)|\leq C_d(1+|x|)^{-d-1}.
\]
By Young's inequality, $\|W*f\|_1\leq C_d\|f\|_1$. Hence, it remains to estimate the first term on the right-hand side of (\ref{Bochner-Riesz1}). Since the partition $\{\varsigma_\nu\}_\nu$ is finite, it suffices to fix one element $\eta$ of the partition and prove that for every $\lambda>0$,
\begin{eqnarray}\label{Bochner-Riesz3}
\varphi\bigl(\chi_{(\lambda,\infty)}(\big|\sum_{j>0}V_j*f\big|)\bigr)
\le C_d\lambda^{-1}\|f\|_1,
\end{eqnarray}
where $$V_j(x)=c_d\eta(x/|x|)\cos\left(2\pi|x|-\pi d/2\right)\chi(2^{-j}|x|)|x|^{-d}.$$

The following two estimates are given in \cite[Lemma~3.1]{Christ1988}.
\begin{lem}\label{kernelestimate223}
	Let $\widetilde {V_j}(x)=\overline{V_j(-x)}$. Then
\begin{align*}
	|(\widetilde {V_j}*V_j)(x)|
	&\le C_d2^{-jd}(1+|x|)^{-\mu},\  j\ge1,
	\label{self}\\
	|(\widetilde {V_i}*V_j)(x)|
	&\le C_d2^{-jd}2^{-\mu i},
	\ 1\le i\leq j-4. 
\end{align*}
Both kernels above are supported in $\{|x|\leq C_d2^j\}$.
\end{lem}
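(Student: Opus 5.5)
The statement to prove is Lemma~\ref{kernelestimate223}, quoted from \cite[Lemma~3.1]{Christ1988}. It is purely scalar, so I would argue it by stationary phase on $\R^d$; no noncommutative input is needed.

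\textbf{Support.} Since $V_j$ is supported in $\{2^{j-1}\le|x|\le2^{j+1}\}$, the convolution $\widetilde{V_i}*V_j$ is supported in $\{|x|\le 2^{i+1}+2^{j+1}\}$. For $i\le j$ this is contained in $\{|x|\le 4\cdot 2^j\}$.

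\textbf{Cross terms, $i\le j-4$.} I would bound the kernel trivially. By Young's inequality,
\[
|(\widetilde{V_i}*V_j)(x)|\le \|V_i\|_1\|V_j\|_\infty\le C_d2^{-jd}.
\]
This loses the factor $2^{-\mu i}$, which has to come from oscillation. Write $V_i=c_d\,\eta(\cdot/|\cdot|)\cos(2\pi|\cdot|-\pi d/2)\chi(2^{-i}|\cdot|)|\cdot|^{-d}$ and pass to the Fourier side. The key input is the asymptotics
\[
\widehat{V_i}(\xi)\ \text{is concentrated near } |\xi|=1,\qquad |\widehat{V_i}(\xi)|\le C_N2^{-i(d-1)/2}\bigl(1+2^i\bigl||\xi|-1\bigr|\bigr)^{-N}.
\]
This comes from polar coordinates: the spherical cap $\eta$ gives a Fourier transform of the surface measure decaying like $(\rho|\xi|)^{-(d-1)/2}$, and the radial integral against $\cos(2\pi\rho)\chi(2^{-i}\rho)\rho^{-1}$ localizes to $||\xi|-1|\lesssim 2^{-i}$. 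Then
\[
|(\widetilde{V_i}*V_j)(x)|\le \int|\widehat{V_i}||\widehat{V_j}|\,d\xi .
\]
Since $i\le j-4$, the factor $\widehat{V_j}$ is localized to a shell of width $2^{-j}$ inside the shell of width $2^{-i}$. Using the sup bound $2^{-i(d-1)/2}$ on $\widehat{V_i}$ and the $L_1$ mass $2^{-j(d-1)/2}2^{-j}$ of $\widehat{V_j}$, I obtain
\[
2^{-i(d-1)/2}2^{-j(d+1)/2}=2^{-\mu i}2^{-jd}\,2^{j(d-1)/2}\cdot 2^{-j}\cdot\ldots
\]
This does not quite match the claimed bound, so pure Plancherel loses powers and a spatial argument is needed. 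In space, for fixed $x$,
\[
\int V_i(y)V_j(x+y)\,dy .
\]
On $\mathrm{supp}\,V_i$ we have $|y|\sim2^i\ll2^j$. I would expand $|x+y|$ to first order, which gives phase $2\pi(|y|\pm\langle y,(x+y)/|x+y|\rangle)$ up to errors, and $|x+y|^{-d}\approx 2^{-jd}$ with slowly varying amplitude. The $y$-integral is then essentially $2^{-jd}$ times $\widehat{V_i}$-type quantities evaluated at unit frequency $\omega=(x+y)/|x+y|$, namely $\int V_i(y)e^{\pm2\pi i\langle y,\omega\rangle}dy$. By the same stationary phase these are $O(2^{-i(d-1)/2})=O(2^{-\mu i})$. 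Concretely, in polar coordinates $y=\rho\theta$ the angular integral gives $\rho^{-(d-1)/2}$, and the radial integral over $\rho\sim2^i$ of $\rho^{-1}\rho^{-(d-1)/2}\times$(nonoscillatory combination) gives $2^{-\mu i}$. The remainder terms from the Taylor expansion carry $2^{2i-j}$ phase errors; I would handle them by choosing a finer split or by noting that their contribution is $O(2^{-jd}2^{i-j})$-type after integration by parts.

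\textbf{Diagonal term, $i=j$.} For $|x|\lesssim1$, the trivial bound $\|V_j\|_1\|V_j\|_\infty\le C2^{-jd}$ suffices. For $1\le|x|\le C2^j$, I would write the integral in coordinates adapted to $x$. The product $\cos(2\pi|y|-\cdot)\cos(2\pi|x+y|-\cdot)$ splits into phases $|y|\pm|x+y|$. The ``$-$'' phase has a critical set along the line through $x$ (excluded by the cap cutoff $\eta$ except in a cone). In transverse directions it has nondegenerate Hessian with $d-1$ eigenvalues of size about $|x|/2^{2j}$ in the relevant region. Stationary phase in those variables then gives decay $(1+|x|)^{-(d-1)/2}=(1+|x|)^{-\mu}$ against the amplitude size $2^{-2jd}\cdot 2^{jd}$. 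The ``$+$'' phase is nonstationary and is disposed of by integration by parts.

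\textbf{Main obstacle.} The uniform stationary phase in both estimates, with amplitudes only homogeneous-smooth at scale $2^j$, is the hard step. For that reason I would rely on, and cite, \cite[Lemma~3.1]{Christ1988}, where exactly these computations are carried out.
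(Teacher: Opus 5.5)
The paper gives no argument of its own for this lemma; it simply quotes \cite[Lemma~3.1]{Christ1988}. Your final step of deferring to Christ therefore coincides with what the paper does, and your support computation is correct. The self-contained sketch you give for the cross term, however, has a genuine gap. Expanding $|x+y|$ to first order is only legitimate when $2i\lesssim j$. For $|y|\sim2^i$ and $|x|\sim2^j$, the quadratic term $\bigl(|y|^2-\langle y,x/|x|\rangle^2\bigr)/(2|x|)$ contributes a phase of size up to $2^{2i-j}$, which is enormous in the range $j/2<i\le j-4$. So it is not a perturbation, and your claimed $O(2^{-jd}2^{i-j})$ bound for the ``remainder'' has no basis. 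Your Plancherel display is also garbled: the Fourier-side bound is $2^{-\mu i}2^{-j(\mu+1)}=2^{-jd}2^{-\mu i}\,2^{j\mu}$, off by exactly $2^{j\mu}$, so abandoning that route was right.

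The fix is not to linearize at all. In polar coordinates $y=\rho\theta$,
\[
(\widetilde{V_i}*V_j)(x)=c_d\int_0^\infty\cos\bigl(2\pi\rho-\tfrac{\pi d}{2}\bigr)\chi(2^{-i}\rho)\rho^{-1}\Big[\int_{S^{d-1}}\eta(\theta)V_j(x+\rho\theta)\,d\sigma(\theta)\Big]d\rho ,
\]
and the inner integral has phases $\pm2\pi|x+\rho\theta|$.

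\begin{itemize}
\item If its amplitude is not identically zero, then $|x|\geq2^{j-1}-2^{i+1}\geq3\rho$, because $i\le j-4$.
\item Hence $\theta\mapsto|x+\rho\theta|$ has only the critical points $\theta=\pm x/|x|$. Their Hessians have absolute size $\rho|x|/(|x|\pm\rho)\in[\tfrac34\rho,\tfrac32\rho]$, uniformly in $\rho/|x|\in(0,\tfrac13]$.
\item The amplitude is $O(2^{-jd})$ with bounded $\theta$-derivatives.
\end{itemize}

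Stationary phase on the sphere then gives $O(2^{-jd}\rho^{-\mu})$ for the bracket, for every $i\le j-4$. Integrating $\rho^{-1}\,d\rho$ over $\rho\sim2^i$ yields $C_d2^{-jd}2^{-\mu i}$; no radial cancellation is needed. Your diagonal heuristic is the right one: transverse Hessian $\sim|x|2^{-2j}$ along the critical line, with the ``$+$'' phase nonstationary because of the small cap.
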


We now prove (\ref{Bochner-Riesz3}). By the standard density argument and decomposition into four positive functions, it suffices to consider $f\in\n_{c,+}$. Fix such an $f$ and $\lambda>0$. By Theorem \ref{thm:decomposition}, we may decompose $f=g+b+g_{\mathrm{off}}+b_{\mathrm{off}}$.

The estimate for $g$ follows from Lemma \ref{kernelestimate223}. Indeed, by Plancherel's theorem,
\begin{eqnarray}\label{Bochner-Riesz58}
\big\|V_j*g\big\|_2\leq\|g\|_2\|V_j*\widetilde {V_j}\|^{1/2}_1\leq C_d2^{-j\mu/2}\|g\|_2,
\end{eqnarray}
which implies  that
\begin{eqnarray*}
\big\|\sum_{j>0}V_j*g\big\|_2\leq C_d\|g\|_2\sum_{j>0}2^{-j\mu/2}\leq C_d\sqrt{\lambda\|f\|_1}.
\end{eqnarray*}

\subsection{Estimates for the bad functions} Using the projection $\zeta$ introduced in Theorem \ref{thm:decomposition}, it suffices to estimate $$\zeta \sum_{j>0}V_j*(b+b_{\mathrm{off}})\zeta.$$
Set $b_{k,0}=b_{k}$. The same support argument as in Section~\ref{sec:proof-main} gives
$$	\zeta\sum_{j>0}V_j*(b+b_{\mathrm{off}})\zeta
=\zeta\sum_{n\geq8}\sum_{r\geq0}\sum_{j>0}
V_j*b_{n-j,r}\zeta.$$

	Fix $n\geq8$ and $r\geq0$. First consider the range $2j<n+r$.
We use the cancellation of $b_{n-j,r}$, Young's inequality and $\|\nabla V_j\|_1\leq C_d$ to get
\[
\|V_j*b_{n-j,r}\|_1
\leq C_d2^{j-n-r}\|\nabla V_j\|_1
\|b_{n-j,r}\|_1\leq C_d2^{j-n-r}
\|b_{n-j,r}\|_1,
\]
Since $\|b_{n-j,r}\|_1\leq\|f\|_1$, this yields
	\begin{equation*}
	\begin{split}
		\big\|\sum_{n\geq8}\sum_{r\geq0}\sum_{j:2j<n+r}
		V_j*b_{n-j,r}\big\|_1
		&\leq C_d\|f\|_1
		\sum_{n\geq8}\sum_{r\geq0}\sum_{0<j<(n+r)/2}2^{-j-n-r}\\
		&\leq C_d\|f\|_1
		\sum_{n\geq8}\sum_{r\geq0}2^{-(n+r)/2}\leq C_d\|f\|_1.
	\end{split}
\end{equation*}

For the remaining range, we use the following estimate. Its proof follows the argument of Lemma~\ref{badfunction}. We include the details for the restriction $2j\geq n+r$.
\begin{lem}\label{corner}
	Let $n\ge1$ and $r\ge0$ be integers. Then
	\begin{equation}\label{cornerbound}
		\Big\|\sum_{j:j\geq (n+r)/2}
		V_j*X_j\Big\|_2^2
		\le C_d\lambda\|f\|_1\,2^{-\mu (n+r)/4},
	\end{equation}
where $X_j$ is defined as in Lemma~\ref{badfunction}. The same estimate holds with $X_j$ replaced by $X_j^*$.
\end{lem}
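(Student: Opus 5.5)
We follow the $TT^*$ scheme of Lemma~\ref{badfunction}, with the microlocal decomposition replaced by the explicit kernel bounds of Lemma~\ref{kernelestimate223}. Write $Z_j=V_j*X_j$, where $X_j=p_{n-j}F_jp_{n+r-j}$, and let $J=\{j:2j\ge n+r\}$. Expand
\[
\Big\|\sum_{j\in J}Z_j\Big\|_2^2\le\sum_{j\in J}\sum_{i\in J,\,|i-j|\le3}|\langle Z_i,Z_j\rangle|+2\sum_{j\in J}\sum_{i\in J,\,i\le j-4}|\langle Z_i,Z_j\rangle|.
\]
Both sums are written as $\iint(\widetilde{V_i}*V_j)(y-z)\,\tau(X_i(y)^*X_j(z))\,dy\,dz$. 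As in Lemma~\ref{badfunction}, we bound
\[
|\tau(X_i(y)^*X_j(z))|\le\tfrac12\bigl[\tau(E_{i,j}(y,z))+\tau(G_{i,j}(y,z))\bigr].
\]
This reduces both sums to scalar-kernel estimates against the positive operators $p_kF_kp_k$, with $k\in\{n-i,\,n+r-i\}$.

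\textbf{Near-diagonal terms.} By Cauchy--Schwarz it suffices to bound $\sum_j\|Z_j\|_2^2$, i.e.\ to control
\[
\iint|(\widetilde{V_j}*V_j)(y-z)|\,\tau(E_{j,j}+G_{j,j})\,dy\,dz.
\]
Use $|\widetilde{V_j}*V_j(x)|\le C2^{-jd}(1+|x|)^{-\mu}$ on $|x|\le C2^j$. Integrate in $y$ over dyadic shells $|y-z|\sim2^\ell$ with $\ell\le j+C$. Each shell meets cubes of $\mathcal Q_{n-j}$ of side $2^{j-n}\le2^{j-(n+r)/2}$, so $\int p_{n-j}F_jp_{n-j}$ over a shell is at most $C\lambda(2^{\ell}+2^{j-n})^d$ by \eqref{eq:pkqk}; the same holds for the $n+r-j$ scale. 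Summing over $\ell$ gives
\[
C\lambda\,2^{-jd}\sum_{\ell}2^{-\mu\ell}(2^\ell+2^{j-n})^d\lesssim\lambda\,2^{-j\mu}\bigl(1+2^{(j-n)(d-\mu)}\bigr).
\]
This is where $2j\ge n+r$ enters: we need the factor $2^{-j\mu}$ to dominate. I would first try to absorb the $2^{(j-n)(d-\mu)}$ loss by using the cruder bound $\|\widetilde{V_j}*V_j\|_\infty\le C2^{-jd}$ together with the cube counting, exactly as in the $(s\le n+r)$ step of Lemma~\ref{badfunction}. Integrating the remaining factor in $z$ then gives $\sum_j\varphi(p_{n-j}fp_{n-j}+p_{n+r-j}fp_{n+r-j})\le2\|f\|_1$, times $\sup_{j\in J}2^{-j\mu/2}\le2^{-\mu(n+r)/4}$. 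This last step is presumably the source of the exponent $\mu(n+r)/4$, so I will aim for a per-$j$ gain of $2^{-j\mu/2}$.

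\textbf{Far-off-diagonal terms ($i\le j-4$).} Use $|\widetilde{V_i}*V_j|\le C2^{-jd}2^{-\mu i}\mathbf 1_{|x|\le C2^j}$. Fix $z$ and sum over $i$ first. Since the $p_{n-i}$ are disjoint and $\sum_ip_{n-i}\le\mathbf 1$, the cube-counting argument of Lemma~\ref{badfunction} gives
\[
\sum_{i\le j-4}\int_{|y-z|\le C2^j}p_{n-i}F_ip_{n-i}(y)\,dy\le C\lambda\,2^{jd},
\]
because all the cubes involved have side $\le2^{j-n}\le2^j$. The same holds for the $n+r-i$ scale. This cancels the $2^{-jd}$, and the factor $2^{-\mu i}\le2^{-\mu(n+r)/2}$ survives because $i\in J$. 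Then integrate in $z$ and sum over $j$ using $\tau(p_{n-j}F_jp_{n-j})$, as before, obtaining $C\lambda\|f\|_1\,2^{-\mu(n+r)/2}$. The case of $X_j^*$ is identical, with the roles of the two projections swapped.

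\textbf{Main obstacle.} I expect the hardest step to be the near-diagonal estimate: extracting decay $2^{-j\mu/2}$ from $(1+|x|)^{-\mu}$ while the bad blocks live at scale $2^{j-n}$, which can be large compared with $1$. The fallback is to split $|x|\le2^{j/2}$ and $|x|>2^{j/2}$. On the outer region the kernel gives $2^{-jd}2^{-j\mu/2}$ against counting over a $2^j$-ball, which is fine. On the inner region, a ball of volume $2^{jd/2}$ meets at most $C(1+2^{j/2-(j-n)})^d$ cubes. Here I would use the $L_\infty$-type bound $\int_Q p_kF_kp_k\le C\lambda|Q|$ on a single cube combined with the $\tau$-trace on the other factor.
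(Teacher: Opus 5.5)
There is a genuine gap in your near-diagonal step, and it comes from your first reduction. You symmetrize $|\tau(X_i(y)^*X_j(z))|\le\tfrac12[\tau(E_{i,j})+\tau(G_{i,j})]$ and then aim for a per-$j$ gain $2^{-j\mu/2}$ on the $E$-integral and on the $G$-integral separately. For the $G$-integral (projections $p_{n-j}$, cubes of side $2^{j-n}$) no such gain exists. Your shell count actually gives
\[
2^{-jd}\sum_{\ell\le j+C}2^{-\mu\ell}\bigl(2^{\ell}+2^{j-n}\bigr)^d\le C\bigl(2^{-\mu j}+2^{-dn}\bigr),
\]
and the term $2^{-dn}$ cannot be removed. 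Take $\mathcal M=\mathbb C$ and $f=A\mathbf 1_{[0,1)^d}$ with $A=2\lambda2^{(j-n)d}$. The only stopping cube is $[0,2^{j-n})^d\in\mathcal Q_{n-j}$. Since $\widetilde V_j*V_j\ge c2^{-jd}$ near the origin, the $G$-integral is at least $c\lambda2^{-dn}\|f\|_1$. This is far larger than $2^{-j\mu/2}\lambda\|f\|_1$ when $n$ is fixed and $j\ge(n+r)/2$ with $r$ large.

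Correspondingly, the inequality $2^{j-n}\le2^{j-(n+r)/2}$ you invoke fails as soon as $r>n$. Neither fallback helps. The crude bound $\|\widetilde V_j*V_j\|_\infty\le C2^{-jd}$ with counting over a $2^j$-ball gives only $C\lambda$. On the inner region $|x|\le2^{j/2}$ you still get $2^{-jd}(2^{j/2}+2^{j-n})^d\ge2^{-dn}$. So your route yields at best $C\lambda\|f\|_1(2^{-dn}+2^{-\mu(n+r)/2})$. This does not decay in $r$, and its square root is not summable over $r\ge0$ in the application.

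The missing idea, which is what the paper does, is to keep the geometric mean
\[
|\tau(X_j(y)^*X_j(z))|\le\tau(G_{j,j}(y,z))^{1/2}\,\tau(E_{j,j}(y,z))^{1/2}
\]
and apply Cauchy--Schwarz to $\sum_j\iint$. Compare the asymmetric H\"older bound for the diagonal terms in Lemma~\ref{badfunction}, where the arithmetic mean is used only for the separated pairs $j-i>\varrho$. With the geometric mean, the $G$-factor only needs the harmless bound $C\lambda(2^{-dn}+2^{-\mu j})\|f\|_1\le C\lambda\|f\|_1$. Your shell argument applied to the $E$-factor (cubes of side $2^{j-n-r}$) gives $C\lambda(2^{-d(n+r)}+2^{-\mu j})\|f\|_1\le C\lambda2^{-\mu(n+r)/2}\|f\|_1$, using $j\ge(n+r)/2$ and $\mu\le d$. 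The product of the square roots is $C\lambda\|f\|_1\,2^{-\mu(n+r)/4}$. So the exponent $1/4$ comes from this geometric mean, not from a gain $2^{-j\mu/2}$ on both factors.

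Your treatment of the terms with $i\le j-4$ is fine. There both factors carry $2^{-\mu i}\le2^{-\mu(n+r)/2}$, so symmetrizing costs nothing, and it agrees with the paper up to the order of summation.
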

\begin{proof}
Set
	$
	R_j=V_j*(p_{n-j}F_jp_{n+r-j}).
	$
	We decompose
	\[
	\Big\|\sum_jR_j\Big\|_2^2
	=
	\sum_j\sum_{|i-j|\leq3}\langle R_i,R_j\rangle
	+2\operatorname{Re}
	\sum_j\sum_{i\leq j-4}\langle R_i,R_j\rangle.
	\]
	
The first term on the right-hand side is at most $7\sum_j\|R_j\|_2^2$. For $i\leq j$, one has
\[
\langle R_i,R_j\rangle
=\iint(\widetilde V_i*V_j)(y-z)
\tau(X_i(y)^*X_j(z))\,dy\,dz.
\]
As in the proof of Lemma~\ref{badfunction}, we have
\begin{align*}
	|\tau(X_i(y)^*X_j(z))|
	&\leq
	\tau\bigl((p_{n-i}F_ip_{n-i})(y)
	(p_{n-j}F_jp_{n-j})(z)\bigr)^{1/2}\\
	&\quad\times
	\tau\bigl((p_{n+r-i}F_ip_{n+r-i})(y)
	(p_{n+r-j}F_jp_{n+r-j})(z)\bigr)^{1/2}.
\end{align*}
For $a=n$ or $a=n+r$, we have
\[
\int_Qp_{a-j}F_jp_{a-j}
\leq2^d\lambda|Q|,\qquad \ell(Q)=2^{j-a}.
\]
It follows that, for every
$\rho>0$ and $z\in\R^d$,
\[
\int_{|y-z|\leq\rho}p_{a-j}(y)F_j(y)p_{a-j}(y)\,dy
\leq C_d\lambda(\rho+2^{j-a})^d.
\]
Using the first estimate in Lemma~\ref{kernelestimate223}, the above inequality with $\rho=C_d2^j$ and the fact that $j\geq (n+r)/2$, we obtain
\begin{align*}
	&\iint|\widetilde V_j*V_j(y-z)|
	\tau\bigl((p_{a-j}F_jp_{a-j})(y)
	(p_{a-j}F_jp_{a-j})(z)\bigr)\,dy\,dz\\
	&\qquad\leq C_d\lambda(2^{-da}+2^{-\mu j})
	\varphi(p_{a-j}F_jp_{a-j})\leq C_d\lambda(2^{-d a}+2^{-\mu (n+r)/2})
	\varphi(p_{a-j}F_jp_{a-j}).
\end{align*}
Therefore, by the Cauchy--Schwarz inequality and the fact that $\mu\leq d$,
\[
\begin{aligned}
	\sum_j\|R_j\|_2^2
	&\leq C_d\lambda\|f\|_1
	\bigl(2^{-dn}+2^{-\mu(n+r)/2}\bigr)^{1/2}
	\bigl(2^{-d(n+r)}+2^{-\mu(n+r)/2}\bigr)^{1/2}\\
	&\leq C_d\lambda\|f\|_1\,2^{-\mu(n+r)/4}.
\end{aligned}
\]

For the second term, we use the second estimate in Lemma~\ref{kernelestimate223} and $2^{i-a}\leq2^j$ to get
\begin{align*}
	&\iint|\widetilde V_i*V_j(y-z)|
	\tau\bigl((p_{a-i}F_ip_{a-i})(y)
	(p_{a-j}F_jp_{a-j})(z)\bigr)\,dy\,dz\\
	&\qquad\leq C_d\lambda2^{-\mu i}
	\varphi(p_{a-j}F_jp_{a-j}).
\end{align*}
Hence
\[
\begin{aligned}
	&\sum_{\substack{j\geq(n+r)/2}}
	\sum_{\substack{(n+r)/2\leq i\leq j-4}}
	\iint|\widetilde V_i*V_j(y-z)|
	\tau\bigl((p_{a-i}F_ip_{a-i})(y)
	(p_{a-j}F_jp_{a-j})(z)\bigr)\,dy\,dz\\
	&\leq C_d\lambda
	\sum_{j\geq(n+r)/2}
	\varphi(p_{a-j}F_jp_{a-j})
	\sum_{i\geq(n+r)/2}2^{-\mu i}\leq C_d\lambda\|f\|_1\,2^{-\mu(n+r)/2}.
\end{aligned}
\]
Consequently, the Cauchy--Schwarz inequality gives
\[
	\left|2\operatorname{Re}
	\sum_j\sum_{i\leq j-4}\langle R_i,R_j\rangle\right|
	\leq C_d\lambda\|f\|_1\,2^{-\mu(n+r)/2}.
\]
This finishes the proof.
\end{proof}

	Applying Lemma~\ref{corner} with
$F_j=f$ and then with $F_j=f_{n+r-j}$, we deduce from the triangle inequality that
	\begin{equation*}
	\begin{split}
		\big\|\sum_{n\geq8}\sum_{r\geq0}\sum_{j:2j\geq n+r}
		V_j*b_{n-j,r}\big\|_2
		&\leq C_d\sqrt{\lambda\|f\|_1}
		\sum_{n\geq8}\sum_{r\geq0}\,
		2^{-\mu(n+r)/8}\\
		&\leq C_d\sqrt{\lambda\|f\|_1},
	\end{split}
\end{equation*}
which completes the estimates for the bad functions.

\subsection{Estimate for the off-diagonal good function} By the same support argument as above, it suffices to estimate
\[
\zeta\sum_{n\ge8}\sum_{r\ge1}\sum_{j>0}
V_j*g_{n-j,r}\zeta.
\]

Fix $n\geq8$ and $r\geq1$. Since
$\mathsf E_{n-j+r-1}g_{n-j,r}=0$, the same
argument as in the proof of Lemma~\ref{mc:prop:paired} gives
$$\|V_j*g_{n-j,r}\|_2
\leq C_d2^{j-n-r}\|\nabla V_j\|_1\|g_{n-j,r}\|_2\leq C_d2^{j-n-r}\|g_{n-j,r}\|_2.$$
Combining this with (\ref{Bochner-Riesz58}), we obtain
\begin{equation}\label{minimum}
	\|V_j*g_{n-j,r}\|_2\le C_d
		\min\{2^{-\mu j/2},2^{j-n-r}\}\|g_{n-j,r}\|_2.
\end{equation}

By (\ref{minimum}) and the Cauchy--Schwarz inequality,
\begin{align*}
	\Big\|\sum_{j>0}V_j*g_{n-j,r}\Big\|_2
	&\leq C_d
	\Big(
	\sum_{j>0}\min\{2^{-\mu j},2^{2(j-n-r)}\}
	\Big)^{1/2}
	\Big(\sum_j\|g_{n-j,r}\|_2^2\Big)^{1/2}\\
	&\leq C_d\sqrt{\lambda\|f\|_1}\,
	2^{-\mu(n+r)/4}.
\end{align*}
Summing over $n$ and $r$, we obtain
\begin{equation*}
	\begin{split}
		\big\|\sum_{n\ge8}\sum_{r\ge1}\sum_{j>0}
		V_j*g_{n-j,r}\big\|_2
		&\leq C_d\sqrt{\lambda\|f\|_1}
		\sum_{n\geq8}\sum_{r\geq1}\,
		2^{-\mu(n+r)/4}\\
		&\leq C_d\sqrt{\lambda\|f\|_1}.
	\end{split}
\end{equation*}
This completes the proof.

\begin{remark}
Note that almost simultaneously, the author in 
\cite{Lai2026BR} obtained the same estimate for Bochner-Riesz means. Below, we briefly explain the main differences in the argument. 
For the off-diagonal good functions, we combine martingale cancellation
directly with the dyadic $L_2$ decay \eqref{Bochner-Riesz58} to obtain
\eqref{minimum}, avoiding the Littlewood--Paley
decomposition in \cite[Section~5]{Lai2026BR}. For the bad functions, we
estimate operator corners using the size bounds for the composed kernels in
Lemma~\ref{kernelestimate223},
in place of the H\"older continuity estimates in
\cite[Section~6]{Lai2026BR}.
\end{remark}

\begin{remark}
By standard transference techniques, we also have the weak type $(1,1)$ estimates of Bochner-Riesz means with critical index on quantum Euclidean spaces, quantum tori, matrix algebras, as well as von Neumann algebras generated by discrete amenable groups admitting finite orthogonal cocycles; see e.g. \cite{CCP2022}. The similar results also hold for the class of Fourier multipliers considered in Theorem~\ref{thm:main}.
\end{remark}

\bigskip
\noindent\textbf{Acknowledgements.}
G. Hong was partially supported by the National Natural Science
Foundation of China (Grant Nos.~12325105, W2441002, and W2611005).
L. Wang was supported by a grant from the Research Grants Council of the
Hong Kong Administrative Region, China (Project No.~CityU 11308924).
X. Wang was supported by the National Natural Science Foundation of China
(Grant Nos.~12601242 and W2611005).
B. Xu was supported by the National Natural Science Foundation of China
(Grant No.~12501172) and the Fundamental Research Funds for the Central
Universities (Grant No.~20720250061).

\bigskip

\noindent\textbf{AI statement.}
The authors used ChatGPT for language polishing and \LaTeX{} editing. During exploratory mathematical discussions with GPT-6-Astra, the AI tool suggested the
decomposition in Lemma \ref{HWX19} (ii), a key ingredient of
the proof. They also helped polish the manuscript and
provided editorial feedback. The authors checked and
developed the suggested idea, supplied further arguments,
and take full responsibility for the mathematical content
of the paper.




\bibliographystyle{alpha}
\bibliography{main}

@article {Seeger1996JAMS,
    AUTHOR = {Seeger, Andreas},
     TITLE = {Singular integral operators with rough convolution kernels},
   JOURNAL = {J. Amer. Math. Soc.},
  FJOURNAL = {Journal of the American Mathematical Society},
    VOLUME = {9},
      YEAR = {1996},
    NUMBER = {1},
     PAGES = {95--105},
      ISSN = {0894-0347,1088-6834},
   MRCLASS = {42B20},
  MRNUMBER = {1317232},
MRREVIEWER = {Steve\ Hofmann},
       DOI = {10.1090/S0894-0347-96-00185-3},
       URL = {https://doi.org/10.1090/S0894-0347-96-00185-3},
}

@article{Parcet2009,
  author = {Parcet, Javier},
  title = {Pseudo-localization of singular integrals and noncommutative
              {C}alder\'{o}n-{Z}ygmund theory},
  journal = {J. Funct. Anal.},
  fjournal = {Journal of Functional Analysis},
  volume = {256},
  year = {2009},
  number = {2},
  pages = {509--593},
  issn = {0022-1236,1096-0783},
  mrclass = {42B20 (46L52 46L53)},
  mrnumber = {2476951},
  mrreviewer = {Fyodor\ A.\ Sukochev},
  doi = {10.1016/j.jfa.2008.04.007},
  url = {https://doi.org/10.1016/j.jfa.2008.04.007}
}

@article {FackKosaki1986PJM,
    AUTHOR = {Fack, Thierry and Kosaki, Hideki},
     TITLE = {Generalized {$s$}-numbers of {$\tau$}-measurable operators},
   JOURNAL = {Pacific J. Math.},
  FJOURNAL = {Pacific Journal of Mathematics},
    VOLUME = {123},
      YEAR = {1986},
    NUMBER = {2},
     PAGES = {269--300},
      ISSN = {0030-8730,1945-5844},
   MRCLASS = {46L10 (46L50 47A30)},
  MRNUMBER = {840845},
MRREVIEWER = {H.\ Araki},
       URL = {http://projecteuclid.org/euclid.pjm/1102701004},
}

@article {Cuculescu1971,
    AUTHOR = {Cuculescu, Ion},
     TITLE = {Martingales on von {N}eumann algebras},
   JOURNAL = {J. Multivariate Anal.},
  FJOURNAL = {Journal of Multivariate Analysis},
    VOLUME = {1},
      YEAR = {1971},
    NUMBER = {1},
     PAGES = {17--27},
      ISSN = {0047-259X},
   MRCLASS = {60B15},
  MRNUMBER = {295398},
MRREVIEWER = {H.\ Umegaki},
       DOI = {10.1016/0047-259X(71)90027-3},
       URL = {https://doi.org/10.1016/0047-259X(71)90027-3},
}

@article {Cadilhac2018,
    AUTHOR = {Cadilhac, L\'{e}onard},
     TITLE = {Weak boundedness of {C}alder\'{o}n-{Z}ygmund operators on
              noncommutative {$L_1$}-spaces},
   JOURNAL = {J. Funct. Anal.},
  FJOURNAL = {Journal of Functional Analysis},
    VOLUME = {274},
      YEAR = {2018},
    NUMBER = {3},
     PAGES = {769--796},
      ISSN = {0022-1236,1096-0783},
   MRCLASS = {42B25 (46L10)},
  MRNUMBER = {3734984},
MRREVIEWER = {Maria\ J.\ Carro},
       DOI = {10.1016/j.jfa.2017.11.003},
       URL = {https://doi.org/10.1016/j.jfa.2017.11.003},
}

@article {CCP2022,
    AUTHOR = {Cadilhac, L\'{e}onard and Conde-Alonso, Jos\'{e} M. and Parcet, Javier},
     TITLE = {Spectral multipliers in group algebras and noncommutative
              {C}alder\'{o}n-{Z}ygmund theory},
   JOURNAL = {J. Math. Pures Appl. (9)},
  FJOURNAL = {Journal de Math\'{e}matiques Pures et Appliqu\'{e}es.
              Neuvi\`eme S\'{e}rie},
    VOLUME = {163},
      YEAR = {2022},
     PAGES = {450--472},
      ISSN = {0021-7824,1776-3371},
   MRCLASS = {42B20 (42B15 46L52)},
  MRNUMBER = {4438906},
MRREVIEWER = {Xudong\ Lai},
       DOI = {10.1016/j.matpur.2022.05.011},
       URL = {https://doi.org/10.1016/j.matpur.2022.05.011},
}

@article {CZ1952,
    AUTHOR = {Calder\'{o}n, Alberto P. and Zygmund, Antoni S.},
     TITLE = {On the existence of certain singular integrals},
   JOURNAL = {Acta Math.},
  FJOURNAL = {Acta Mathematica},
    VOLUME = {88},
      YEAR = {1952},
     PAGES = {85--139},
      ISSN = {0001-5962,1871-2509},
   MRCLASS = {42.4X},
  MRNUMBER = {52553},
MRREVIEWER = {H.\ Kober},
       DOI = {10.1007/BF02392130},
       URL = {https://doi.org/10.1007/BF02392130},
}

@article {CZ1956,
    AUTHOR = {Calder\'{o}n, Alberto P. and Zygmund, Antoni S.},
     TITLE = {On singular integrals},
   JOURNAL = {Amer. J. Math.},
  FJOURNAL = {American Journal of Mathematics},
    VOLUME = {78},
      YEAR = {1956},
     PAGES = {289--309},
      ISSN = {0002-9327,1080-6377},
   MRCLASS = {44.0X},
  MRNUMBER = {84633},
MRREVIEWER = {H.\ Kober},
       DOI = {10.2307/2372517},
       URL = {https://doi.org/10.2307/2372517},
}

@article {Christ1988,
    AUTHOR = {Christ, Michael},
     TITLE = {Weak type {$(1,1)$} bounds for rough operators},
   JOURNAL = {Ann. of Math. (2)},
  FJOURNAL = {Annals of Mathematics. Second Series},
    VOLUME = {128},
      YEAR = {1988},
    NUMBER = {1},
     PAGES = {19--42},
      ISSN = {0003-486X,1939-8980},
   MRCLASS = {42B20},
  MRNUMBER = {951506},
MRREVIEWER = {Jos\'{e}\ Garc\'{\i}a-Cuerva},
       DOI = {10.2307/1971461},
       URL = {https://doi.org/10.2307/1971461},
}

@article {CR1988,
    AUTHOR = {Christ, Michael and Rubio de Francia, Jos\'e{} L.},
     TITLE = {Weak type {$(1,1)$} bounds for rough operators. {II}},
   JOURNAL = {Invent. Math.},
  FJOURNAL = {Inventiones Mathematicae},
    VOLUME = {93},
      YEAR = {1988},
    NUMBER = {1},
     PAGES = {225--237},
      ISSN = {0020-9910,1432-1297},
   MRCLASS = {42B20 (42B25)},
  MRNUMBER = {943929},
MRREVIEWER = {Takahide\ Kurokawa},
       DOI = {10.1007/BF01393693},
       URL = {https://doi.org/10.1007/BF01393693},
}

@article {Hofmann1988,
    AUTHOR = {Hofmann, Steve},
     TITLE = {Weak {$(1,1)$} boundedness of singular integrals with
              nonsmooth kernel},
   JOURNAL = {Proc. Amer. Math. Soc.},
  FJOURNAL = {Proceedings of the American Mathematical Society},
    VOLUME = {103},
      YEAR = {1988},
    NUMBER = {1},
     PAGES = {260--264},
      ISSN = {0002-9939,1088-6826},
   MRCLASS = {42B20 (47G05)},
  MRNUMBER = {938680},
MRREVIEWER = {Kenneth\ F.\ Andersen},
       DOI = {10.2307/2047563},
       URL = {https://doi.org/10.2307/2047563},
}

@article {Tao1999,
    AUTHOR = {Tao, Terence},
     TITLE = {The weak-type {$(1,1)$} of {$L\log L$} homogeneous convolution
              operator},
   JOURNAL = {Indiana Univ. Math. J.},
  FJOURNAL = {Indiana University Mathematics Journal},
    VOLUME = {48},
      YEAR = {1999},
    NUMBER = {4},
     PAGES = {1547--1584},
      ISSN = {0022-2518,1943-5258},
   MRCLASS = {42B20 (47G10)},
  MRNUMBER = {1757083},
MRREVIEWER = {Javier\ Duoandikoetxea},
       DOI = {10.1512/iumj.1999.48.1784},
       URL = {https://doi.org/10.1512/iumj.1999.48.1784},
}

@article {MeiParcet2009,
    AUTHOR = {Mei, Tao and Parcet, Javier},
     TITLE = {Pseudo-localization of singular integrals and noncommutative
              {L}ittlewood-{P}aley inequalities},
   JOURNAL = {Int. Math. Res. Not. IMRN},
  FJOURNAL = {International Mathematics Research Notices. IMRN},
      YEAR = {2009},
    NUMBER = {8},
     PAGES = {1433--1487},
      ISSN = {1073-7928,1687-0247},
   MRCLASS = {46L51 (42B20 46B25 46L52)},
  MRNUMBER = {2496770},
MRREVIEWER = {Christian\ Le Merdy},
       DOI = {10.1093/imrn/rnn165},
       URL = {https://doi.org/10.1093/imrn/rnn165},
}

@article {JungeXu2007JAMS,
	AUTHOR = {Junge, Marius and Xu, Quanhua},
	TITLE = {Noncommutative maximal ergodic theorems},
	JOURNAL = {J. Amer. Math. Soc.},
	FJOURNAL = {Journal of the American Mathematical Society},
	VOLUME = {20},
	YEAR = {2007},
	NUMBER = {2},
	PAGES = {385--439},
	ISSN = {0894-0347},
	MRCLASS = {46L53 (37A55 46B70 46L51 46L55 47A35 47L05)},
	MRNUMBER = {2276775},
	MRREVIEWER = {Adam G. Skalski},
	DOI = {10.1090/S0894-0347-06-00533-9},
	URL = {https://doi.org/10.1090/S0894-0347-06-00533-9},
}

@article {Mei2007MemAMS,
	AUTHOR = {Mei, Tao},
	TITLE = {Operator valued {H}ardy spaces},
	JOURNAL = {Mem. Amer. Math. Soc.},
	FJOURNAL = {Memoirs of the American Mathematical Society},
	VOLUME = {188},
	YEAR = {2007},
	NUMBER = {881},
	PAGES = {vi+64},
	ISSN = {0065-9266},
	MRCLASS = {46L52 (42B30 42B35 46B70 46M35)},
	MRNUMBER = {2327840},
	MRREVIEWER = {Christian Le Merdy},
	DOI = {10.1090/memo/0881},
	URL = {https://doi.org/10.1090/memo/0881},
}

@article {HongLaiXu2023MathAnn,
    AUTHOR = {Hong, Guixiang and Lai, Xudong and Xu, Bang},
     TITLE = {Maximal singular integral operators acting on noncommutative
              {$L_p$}-spaces},
   JOURNAL = {Math. Ann.},
  FJOURNAL = {Mathematische Annalen},
    VOLUME = {386},
      YEAR = {2023},
    NUMBER = {1-2},
     PAGES = {375--414},
      ISSN = {0025-5831,1432-1807},
   MRCLASS = {46L52 (42B20 46L53)},
  MRNUMBER = {4585152},
       DOI = {10.1007/s00208-022-02401-z},
       URL = {https://doi.org/10.1007/s00208-022-02401-z},
}

@article {HongXu2021JFA,
    AUTHOR = {Hong, Guixiang and Xu, Bang},
     TITLE = {A noncommutative weak type {$(1,1)$} estimate for a square
              function from ergodic theory},
   JOURNAL = {J. Funct. Anal.},
  FJOURNAL = {Journal of Functional Analysis},
    VOLUME = {280},
      YEAR = {2021},
    NUMBER = {9},
     PAGES = {Paper No. 108959, 29},
      ISSN = {0022-1236,1096-0783},
   MRCLASS = {42B20 (42B25 46L51 46L52 46L53)},
  MRNUMBER = {4220746},
MRREVIEWER = {Chao\ Zhang},
       DOI = {10.1016/j.jfa.2021.108959},
       URL = {https://doi.org/10.1016/j.jfa.2021.108959},
}

@article {CaspersPotapovSukochevZanin2019AmerJMath,
    AUTHOR = {Caspers, Martijn and Potapov, Denis and Sukochev, Fedor A. and Zanin, Dmitriy},
     TITLE = {Weak type commutator and {L}ipschitz estimates: resolution of
              the {N}azarov-{P}eller conjecture},
   JOURNAL = {Amer. J. Math.},
  FJOURNAL = {American Journal of Mathematics},
    VOLUME = {141},
      YEAR = {2019},
    NUMBER = {3},
     PAGES = {593--610},
      ISSN = {0002-9327,1080-6377},
   MRCLASS = {46L10},
  MRNUMBER = {3956516},
       DOI = {10.1353/ajm.2019.0019},
       URL = {https://doi.org/10.1353/ajm.2019.0019},
}

@misc{Lai2026BR,
  author = {Xudong Lai},
  title = {Weak type $(1,1)$ boundedness of Bochner--Riesz means at the critical index on quantum tori},
  year = {2026},
  eprint = {2610.03107},
  archivePrefix = {arXiv},
  primaryClass = {math.OA},
  note = {arXiv:2610.03107v1},
  url = {https://arxiv.org/abs/2610.03107v1}
}

@inproceedings{Parcet2026ICM,
  author = {Parcet, Javier},
  title = {The Impact of {Schur} Multipliers in Harmonic Analysis and Operator Algebras},
  booktitle = {Proceedings of the International Congress of Mathematicians 2026 - Volume 4: Invited Lectures (Sections 5--8)},
  publisher = {Society for Industrial and Applied Mathematics},
  year = {2026},
  pages = {730--750},
  doi = {10.1137/25M1803371},
  url = {https://doi.org/10.1137/25M1803371}
}

\end{document}